\renewcommand*{\eqref}[1]{%
  \hyperref[{#1}]{\textup{\tagform@{\ref*{#1}}}}%
}
\newcommand{\be}{\begin{equation}}
\newcommand{\ee}{\end{equation}}
\newcommand{\beq}{\begin{eqnarray}}
\newcommand{\eeq}{\end{eqnarray}}
\newtheorem{thm}{Theorem}[section]
\newtheorem{conj}{Conjecture}[section]
\newtheorem{lma}{Lemma}[section]
\newtheorem{prop}{Proposition}[section]
\newtheorem{cor}{Corollary}[section]
\newtheorem{defn}{Definition}[section]
\theoremstyle{remark}
\newtheorem{rem}{Remark}[section]
\numberwithin{equation}{section}
\def\be{\begin{equation}}
\def\ee{\end{equation}}
\def\bee{\begin{equation*}}
\def\eee{\end{equation*}}
\def\ol{\overline}
\def\lf{\left}
\def\ri{\right}
\def\wt{\widetilde}
\def\la{\langle}
\def\ra{\rangle}
\def\p{\partial}
\def\ol{\overline}
\def\tr{\operatorname{tr}}
\def\e{\epsilon}
\def\a{{\alpha}}
\def\R{\mathbb{R}}
\def\o{\overline}
\def\d{\frac {\operatorname d}{\operatorname {dt}}}
\def\vh{\vspace{.2cm}}
\def\S{\Sigma}
\def\SS{\partial\Omega}
\def\Lp{\Delta}
\def\Na{\nabla}
\def\eps{\varepsilon}
\def\d{\delta}
\def\po{\partial \Omega}
\def\u{\underline}
\def\MS{\mathcal{S}}
\def\Me{M_{ext}}
\def\MW{\mathcal{W}}
\def\O{\Omega}
\def\ti{\tilde}
\def\MF{\ti M\setminus F}
\def\M{\mathcal{M}}
\def\nas{\nabla^{\Sigma}}
\def\va{\mathbf{a}}
\def\ua{u_{\va}}
\def\Sp{\mathbb{S}^2}
\def\nas{\nabla^{\Sigma}}
\def\mE{\mathcal{E}}
\begin{document}
\title[]
{On A Spacetime Positive Mass Theorem with Corners}

 \author{Tin-Yau Tsang}
\address [Tin-Yau Tsang] {Department of Mathematics, University of California, Irvine}
\email{tytsang@uci.edu}


\date{Feb, 2022}

\begin{abstract} 
In this paper we consider the positive mass theorem for general initial data
sets satisfying the dominant energy condition which are singular across a piecewise smooth
surface. We find jump conditions on the metric and second fundamental form which
are sufficient for the positivity of the total spacetime mass. Our method extends that of
\cite{HKK} to the singular case (which we refer to as initial data sets with corners) using some ideas
from \cite{HMT}. As such we give an integral lower bound on the spacetime mass and we characterise 
the case of zero mass. Our approach also leads to a new notion of quasilocal mass which we show to be
positive and satisfy localised Penrose-type inequality, extending the work of \cite{ST} and \cite{ST3} to the spacetime case. Moreover,  we give sufficient conditions 
under which spacetime Bartnik data sets cannot admit a fill-in satisfying the dominant energy condition. 
This generalises the work of \cite{SWWZ} and \cite{SWW} to the spacetime setting. 
\end{abstract}

\keywords{}

\maketitle

\markboth{TIN-YAU TSANG}{SPACETIME POSITIVE MASS THEOREM WITH CORNERS}
 \section{Introduction} 
The positive mass theorem states that an asymptotically flat initial data set $(M^3,g,k)$ satisfying the dominant energy condition must have positive ADM mass. For smooth initial data sets, for the time symmetric case ($k\equiv0$), or sometimes called the Riemannian case, the result has been proved by Schoen and Yau in \cite{SY1} by the minimal surface approach. It is later proved by Witten in \cite{W} assuming the manifold is spin, which is satisfied if the dimension of manifold is 3. For initial data sets which are not time-symmetric, Witten's spinor argument is still applicable (\cite{PT}). Schoen and Yau in \cite{SY2} proved the positive energy theorem through the Jang equation. The positive mass theorem without the spin 
assumption was proved in higher dimensions by Eichmair, Huang, Lee and Schoen in \cite{EHLS} by considering marginally outer trapped surfaces (MOTS) which are analogous to minimal surfaces in the time-symmetric case. Recently, D. Stern (\cite{S}) proposed a new method to study scalar curvature and 3 dimensional topology by considering level sets of harmonic forms. This has given rise to alternative proofs of the positive mass theorem for both the Riemannian case (\cite{BKKS}, cf.\cite{Bartnik}) and the spacetime case (\cite{HKK}) (cf. \cite{J}, \cite{JK}).  We refer readers to the comprehensive survey \cite{BHKKZ} by Bray, Hirsch, Kazaras, Khuri and Zhang for applications of the level set method to the study of ADM mass.  

\

If $k\equiv 0$, for non-smooth initial data sets, the problem has also been extensively studied (\cite{L}, \cite{LL}, \cite{LM}, \cite{MS}, \cite{M1}, \cite{ST}, \cite{ST2}). By generalising the integral formula in \cite{S} and \cite{BKKS}, it is shown in \cite{HMT} that, if the metric is not smooth across a closed hypersurface (corner), then an integral of the mean curvature difference would determine a lower bound of the mass. For the spacetime case, as motivated by the Hamiltonian formulation (\cite{BY}, \cite{HH}), the positive mass theorem with non-compact boundary has been studied in \cite{ADLM}. This also suggests conditions to be imposed on the corner for the positive mass theorem. Together, with the approach in \cite{HMT}, we generalise the integral formula in \cite{HKK} and get the following results. 

\begin{thm}\label{main}
Let $M^3$ be a complete non-compact smooth manifold and $\tilde\Sigma\subset M$ be a piecewise smooth surface.  Assume the metric $g$ and the symmetric (0,2)-tensor $k$ on $M$ satisfy the following:
\begin{enumerate}
    \item $(g, k)$ is  asymptotically flat, 
    \item $g$ and $k$ are smooth up to each component of $\tilde\Sigma$, 
    \item $g$ is Lipschitz, 
    \item $k$ need not be continuous across $\tilde \Sigma$. 
\end{enumerate} 
Let $\mathcal{E}$ be an asymptotically flat end of $M$. Assume there exists $\mathcal{S}$, a finite (possibly empty) disjoint union of connected weakly trapped surfaces which do not intersect $\tilde \S$, such that $H_2(M_{ext}, \MS, \mathbb{Z})=0$, where $M_{ext}$ is the exterior region of $M$ containing $\mathcal{E}$ with $\partial \Me= \MS$. Denote $\tilde\Sigma \cap \Me$ by $\Sigma$. Then for $\mathcal{E}$, there exists a spacetime harmonic function $u$ such that
\begin{equation}\label{ADMLB}
\begin{split}
16\pi (E-|P|)\ge &\int_{M_{ext}\setminus \S}\left(\frac{|\o \Na \o \Na u|^2}{|\nabla u|}+2(\mu|\nabla u|+ \la J, \Na u \ra) \right)\\&+2\int_\S(H_--H_+)|\nabla u|-2\int_\S (\pi_--\pi_+)(\Na u, \nu), 
\end{split}
\end{equation}
where $\pi_\pm$ and $H_\pm$ respectively denote the conjugate momentum tensors of $k_{\pm}$  and the mean curvatures of $g_\pm$ on $\S$ with respect to $\nu$, the unit normal pointing into the infinity of $\mathcal{E}$. 
In particular, if the dominant energy condition holds on $\Me \setminus \S$ and $$(H_--H_+)-|\omega_--\omega_+|\geq0$$ on $\S$, then we have 
$$
E\geq|P|,
$$
where $\omega_{\pm}:=\pi_{\pm}(\cdot,\nu)$. 
\end{thm}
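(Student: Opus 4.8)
\medskip\noindent\emph{Strategy of proof.} The plan is to extend the spacetime harmonic function method of \cite{HKK} to the non-smooth setting, using the transmission (one-sided) analysis of \cite{HMT}. Granting the integral bound \eqref{ADMLB}, the concluding assertion is immediate: under the dominant energy condition $\mu\ge|J|$ one has $\mu|\nabla u|+\langle J,\nabla u\rangle\ge(\mu-|J|)|\nabla u|\ge0$ on $\Me\setminus\Sigma$, so the bulk integrand in \eqref{ADMLB} is nonnegative; and since $\omega_\pm=\pi_\pm(\cdot,\nu)$ gives $(\pi_--\pi_+)(\nabla u,\nu)=(\omega_--\omega_+)(\nabla u)$, the Cauchy--Schwarz inequality yields $|(\pi_--\pi_+)(\nabla u,\nu)|\le|\omega_--\omega_+|\,|\nabla u|$ pointwise on $\Sigma$ (all norms taken in the continuous metric $g$, and $|\nabla u|$ being continuous across $\Sigma$), so under $(H_--H_+)-|\omega_--\omega_+|\ge0$ the two $\Sigma$-integrals in \eqref{ADMLB} combine to a nonnegative quantity. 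Hence the right-hand side of \eqref{ADMLB} is $\ge0$ and $E\ge|P|$. The whole task is therefore to prove \eqref{ADMLB}.

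\medskip\noindent\emph{Step 1: the spacetime harmonic function.} Fix a unit vector $a$ in the asymptotic coordinates of $\mE$, equal to $P/|P|$ when $P\neq0$. Exhaust $\Me$ by compact regions $\Omega_j$ bounded by $\MS$ and a large coordinate sphere $S_{r_j}\subset\mE$, and on $\Omega_j$ solve the mixed problem for the spacetime harmonic equation $\Delta u_j=-|\nabla u_j|\,\tr_g k$ with $u_j=\sum_i a_i x^i$ on $S_{r_j}$ and no condition imposed on $\MS$, following \cite{HKK}. As $g$ is merely Lipschitz and $k$ may jump across $\Sigma$, the equation is read weakly; elliptic theory gives $u_j\in C^{1,\alpha}$ globally, while --- as in \cite{HMT} --- along $\Sigma$ the one-sided limits of its second derivatives exist and are tied to one another by the transmission condition implicit in the weak formulation, which is exactly the regularity needed for the boundary computation of Step 3. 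Uniform gradient and second-order estimates together with elliptic regularity produce a subsequential limit $u$ on $\Me$: spacetime harmonic on $\Me\setminus\Sigma$, asymptotic to $\sum_i a_i x^i$ in $\mE$, and $C^{1,\alpha}$ across $\Sigma$; in particular $|\nabla u|$ and the tangential gradient $\nas u$ are continuous across $\Sigma$, while $\nabla^2u$ jumps there.

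\medskip\noindent\emph{Step 2: the Stern--Bochner identity and the boundary terms.} On each connected component $N$ of $\Me\setminus\Sigma$, run the spacetime version of Stern's computation from \cite{HKK}: away from the critical set of $u$ (of measure zero, by Sard), the Bochner formula for $\Delta u=-|\nabla u|\,\tr_g k$ and the coarea formula yield, after integrating over the regular level sets and in the level parameter,
\[
\int_{\partial N}\big(\partial_\nu|\nabla u|+\mathcal{K}\big)\ \ge\ \int_{N}\bigg(\frac{|\o\Na\o\Na u|^{2}}{|\nabla u|}+2\mu|\nabla u|+2\langle J,\nabla u\rangle\bigg)\ +\ \int_{\mathbb{R}}\bigg(2\pi\chi(\Sigma_t\cap N)-\int_{\Sigma_t\cap N}(\text{curvature term})\bigg)\,dt,
\]
where $\o\Na\o\Na u=\nabla^2u+|\nabla u|\,k$ is the spacetime Hessian of \cite{HKK}, $\mathcal{K}$ collects the flux terms involving $k$, and the final integral is estimated via the Gauss--Bonnet theorem. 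Here the hypothesis $H_2(\Me,\MS,\mathbb{Z})=0$ enters exactly as in \cite{HKK}: it constrains the topology, in particular the Euler characteristic, of the regular level sets $\Sigma_t$, so that the Gauss--Bonnet term carries the correct sign. Summing over the components $N$ and adding the resulting inequalities: the part of $\partial N$ at infinity, present for the component containing $\mE$, contributes $16\pi(E-a\cdot P)=16\pi(E-|P|)$ after the standard asymptotic expansion; the part on $\MS$ contributes nonnegatively because each component of $\MS$ is weakly trapped; and the part on $\Sigma$ is counted once from each adjoining component, with opposite outward conormals.

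\medskip\noindent\emph{Step 3: the corner term --- the crux.} Adding the two $\Sigma$-contributions from the adjoining components, the continuity across $\Sigma$ of $g$, of $|\nabla u|$ and of $\nas u$ produces an extensive cancellation, leaving only the jump of the second fundamental form of $\Sigma$ and the jump of $k$; rewriting these through the ambient mean curvatures $H_\pm$ of $g_\pm$ along $\nu$ and the conjugate momenta $\pi_\pm$ (equivalently $\omega_\pm=\pi_\pm(\cdot,\nu)$), the corner contribution collapses precisely to $2\int_\Sigma(H_--H_+)|\nabla u|-2\int_\Sigma(\pi_--\pi_+)(\nabla u,\nu)$, the corner term of \eqref{ADMLB}. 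Making this identification rigorous is the main obstacle: one has to push the one-sided limits through the coarea/level-set machinery at points where $\Sigma$ meets the critical set of $u$ or the non-smooth locus of the piecewise smooth surface $\tilde\Sigma$, with only the $C^{1,\alpha}$ regularity available there, and to justify the limit $r_j\to\infty$ and the subsequential passage uniformly in all the boundary integrals; the remainder is the bookkeeping of \cite{HKK} transplanted in the evident way. Granting this, the assembled inequality is \eqref{ADMLB}, and the theorem follows.
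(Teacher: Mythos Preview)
Your outline follows the same architecture as the paper's proof: construct a spacetime harmonic function asymptotic to a linear coordinate, apply the level-set integral formula of \cite{HKK} on each smooth piece of $\Me\setminus\Sigma$, use the boundary formula of \cite{HMT} on $\Sigma$ so that the tangential terms cancel across the corner, control the Euler characteristic of regular level sets via $H_2(\Me,\MS,\mathbb{Z})=0$, and read off $E-|P|$ from the asymptotic cylinder. Your Step~3 description of the corner cancellation is correct in spirit and matches what the paper does with its explicit boundary formula (Lemma~\ref{BF}): the ``extra'' terms involving $\Delta_\Sigma\eta$, $(\nabla_\Sigma\eta)(\nu(u))$, and the geodesic curvature of $\tau_t$ depend only on $u|_\Sigma$, $\nu(u)$, and $|\nabla u|$, all continuous across $\Sigma$, and hence cancel in the sum.

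There is, however, a genuine gap in Step~1. You write that on $\MS$ ``no condition'' is imposed and then assert in Step~2 that ``the part on $\MS$ contributes nonnegatively because each component of $\MS$ is weakly trapped.'' Neither claim is right. First, the Dirichlet problem on the exhausting domains is not well-posed without a boundary condition on $\MS$; the paper (and \cite{HKK}) imposes $u=c_i$ on each component $\partial_i M$ of $\MS$. Second, and more importantly, the weakly trapped condition alone does \emph{not} force the $\MS$-boundary contribution to have the correct sign: from \eqref{Boundary term constant} that contribution is $\int_{\partial_i M}\big(H|\partial_{\nu_\MS}u|-(\tr_{\partial_i M}k)\partial_{\nu_\MS}u\big)$, and one needs $\partial_{\nu_\MS}u\le 0$ on weakly outer trapped components and $\partial_{\nu_\MS}u\ge 0$ on weakly inner trapped components to conclude it is $\le 0$. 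Securing these signs requires a nontrivial argument: the paper establishes (Lemma~\ref{ND}, following \cite{HKK} Lemma~5.1) that the map $\vec c\mapsto u_{\vec c}$ is $C^1$ in the Dirichlet constants, and then uses a continuity/degree argument to find $\vec c$ for which each $\partial_{\nu_\MS}u_{\vec c}$ has the prescribed sign and a zero on each component. This choice is also what guarantees $\MS$ misses the regular level sets, feeding into the level-set topology statement (Theorem~\ref{top}). Without this ingredient your inequality at $\MS$ does not close, so you should insert the corresponding step rather than treat it as automatic.
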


\begin{cor}\label{rigid}
Assume the dominant energy condition holds on $\Me\setminus \S$ and $$(H_--H_+)-|\omega_--\omega_+|\geq0$$ on $\S$. If $E=|P|$, then $M$ is diffeomorphic to $\R^3$. If $E=|P|=0$, then $(M,g,k)$ arises from an isometric embedding into Minkowski space as the graph of a linear combination of spacetime harmonic functions.  
\end{cor}

\begin{cor}\label{regids}
Assume that $\S$ is smooth, the dominant energy condition holds on $\Me\setminus \S$ and $$(H_--H_+)-|\omega_--\omega_+|\geq0$$ on $\S$. If $E=|P|=0$, $k$ is continuous and the normal derivative of $tr_g k$ is continuous on $\S$, then $g\in C^{2,1}_{loc}(M)$ and $k\in C^{1,1}_{loc}(M)$. 
\end{cor}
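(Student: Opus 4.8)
The plan is to obtain the global and topological conclusions from Corollary~\ref{rigid} and to reduce the statement to a regularity assertion across $\S$. Under the present hypotheses Corollary~\ref{rigid} already gives that $M$ has a single end, that $H_2(M,\mathbb{Z})=0$, and that $(M,g,k)$ is realised by an isometric embedding into a flat spacetime. In addition one revisits the equality case of \eqref{ADMLB}: since $k$ is continuous, the one-forms $\omega_\pm=\pi_\pm(\cdot,\nu)$ coincide on $\S$, so the corner hypothesis reduces to $H_-\ge H_+$, and vanishing of the right-hand side of \eqref{ADMLB} forces the dominant energy condition to be saturated in the strong form $\mu\equiv 0$, $J\equiv 0$ on $\Me\setminus\S$, forces $H_-=H_+$ along $\S$, forces $\Na u$ to be tangent to $\S$ along $\S$, and forces the relevant spacetime harmonic functions $u$ to satisfy $\o\Na\o\Na u\equiv 0$, that is
\[
\Na^2 u=-|\Na u|\,k
\]
on each side of $\S$. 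Thus $(M,g,k)$ is already smooth and vacuum on $M\setminus\S$, and it remains only (i) to upgrade $(g,k)$ across the smooth surface $\S$ to $g\in C^{2,1}_{loc}$ and $k\in C^{1,1}_{loc}$, and (ii) to conclude $M\cong\R^3$ and the Minkowski embedding.

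The main point is (i), a transmission and bootstrap argument across $\S$ carried out in the spirit of the Riemannian corner regularity of \cite{HMT}. I would first show that in the rigid case the \emph{full} second fundamental form of $\S$ in $(M,g_\pm)$ agrees: for $X$ tangent to $\S$, the two equality-case facts ($\Na u$ tangent to $\S$ and $\Na^2 u=-|\Na u|\,k$) give $\mathrm{II}_\pm(X,\Na u)=|\Na u|\,k(X,\nu)$ along $\S$, whose right-hand side is continuous through $\S$; letting $u$ run over the rigid spacetime harmonic functions, whose gradients span $T\S$ on a dense subset, yields $\mathrm{II}_-=\mathrm{II}_+$. Consequently no curvature concentrates on $\S$: in Gaussian coordinates adapted to $\S$ one has $g\in C^{1,1}_{loc}$, and the classical identity $\Ric[g]=P(g,k)$ furnished by the Gauss equation of the embedding ($P$ a fixed algebraic expression) holds across $\S$. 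Now one iterates the following loop: in $g$-harmonic coordinates, $g^{ab}\p_a\p_b g_{ij}=-2\Ric_{ij}[g]+Q_{ij}(g,\p g)$ with $Q$ quadratic in $\p g$, so elliptic $L^p$ and Schauder estimates improve $g$ given the current regularity of $\Ric[g]=P(g,k)$; with $g$ so improved, the spacetime harmonic equation $\Delta_g u=-(\tr_g k)|\Na u|$ improves $u$ away from its critical set; and the rigidity identity $k=-\Na^2 u/|\Na u|$, together with the Codazzi identity $\Na_i k_{jl}=\Na_j k_{il}$ and the momentum constraint $\Na^j k_{ij}=\Na_i(\tr_g k)$ (valid since $J\equiv 0$), improves $k$ (hence $\tr_g k$ and $\Ric[g]$) across $\S$. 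The hypotheses that $k$ is continuous and that $\p_\nu(\tr_g k)$ is continuous across $\S$ are precisely what is needed to start this loop and to control its first, $L^\infty$-level passage through the corner; the iteration yields $g\in C^{2,1}_{loc}$ and $k\in C^{1,1}_{loc}$.

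For (ii): once (i) holds, $(M,g,k)$ is a complete asymptotically flat vacuum data set of class $(C^{2,1},C^{1,1})$ and the embedding of Corollary~\ref{rigid} is correspondingly regular by the Gauss formula. Since $H_2(M,\mathbb{Z})=0$ forces $\MS=\emptyset$ and $\Me=M$, there is no inner boundary; lifting to the universal cover and using that a complete asymptotically flat spacelike hypersurface in $\R^{3,1}$ is an entire graph over $\R^3$, the ambient flat spacetime must be Minkowski and $M\cong\R^3$, with $(M,g,k)$ arising from an isometric embedding into Minkowski space.

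The step I expect to be the main obstacle is the transmission bootstrap in (i): one must choose gauges (Gaussian-adapted, and then harmonic for $g$) that optimise regularity without straightening $\S$, verify that the coupled Ricci, spacetime harmonic, and Codazzi equations propagate regularity through the Lipschitz corner using only the stated matching data, and control the critical sets of the spacetime harmonic functions, where $|\Na u|$ degenerates and which is what limits the scheme. Everything else follows from Corollary~\ref{rigid} and standard facts about spacelike hypersurfaces in Minkowski space.
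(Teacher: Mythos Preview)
Your overall strategy---reduce to Corollary~\ref{rigid}, prove $\mathrm{II}_+=\mathrm{II}_-$, then bootstrap regularity across $\Sigma$---is the same as the paper's. However, the mechanism you propose for obtaining $\mathrm{II}_+=\mathrm{II}_-$ contains a genuine gap.

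The claim that equality in \eqref{ADMLB} forces $\nabla u$ to be tangent to $\Sigma$ along $\Sigma$ is unjustified and in fact false. When $k$ is continuous one has $\omega_-=\omega_+$, so the corner integrand reduces to $(H_--H_+)|\nabla u|$; equality forces $H_-=H_+$ pointwise but says nothing about the direction of $\nabla u$. Your computation $\mathrm{II}_\pm(X,\nabla u)=|\nabla u|\,k(X,\nu)$, which rests on $\nu(u)\equiv 0$ along $\Sigma$, therefore breaks down; and since the subsequent matching of second fundamental forms is the seed of your whole bootstrap, the argument does not start. (Relatedly, equality for a single $u$ only yields $\mu=|J|$ and $J\parallel\nabla u$, not $\mu\equiv 0$, $J\equiv 0$; those require the full family of directions.)

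The paper proceeds in the opposite way. Since $E=|P|=0$, rigidity holds for the \emph{entire} $\mathbb{S}^2$-family $\{u_{\mathbf a}\}$, and an ODE estimate for $|X_{\mathbf a}-X_{\mathbf b}|^2$ (where $X_{\mathbf a}=\nabla u_{\mathbf a}/|\nabla u_{\mathbf a}|$) shows that $\mathbf a\mapsto X_{\mathbf a}(p)$ is a homeomorphism of $\mathbb S^2$ onto the unit sphere in $T_pM$ at every $p$. In particular one may choose $\mathbf a$ with $\partial_\nu u_{\mathbf a}(p)\neq 0$; together with continuity of $k$ and $u|_\Sigma\in C^2$, the identity $\nabla^\pm_A\nabla^\pm_B u=-|\nabla u|k_{AB}$ and the decomposition $\nabla_A\nabla_B u=\nabla^\Sigma_A\nabla^\Sigma_B u+\mathrm{II}_\pm(\partial_A,\partial_B)\,\partial_\nu u$ give $\mathrm{II}_+=\mathrm{II}_-$ and hence $g\in C^{1,1}$. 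The same homeomorphism property yields $\mu=|J|=0$ and $\pi_+(\cdot,\nu)=\pi_-(\cdot,\nu)$. The remaining bootstrap is done directly in Fermi (not harmonic) coordinates: $J=0$ matches $\pi_{i3,3}^\pm$; the Ricci identity with the freedom to set $X_{\mathbf a}=\partial_A$ matches $k_{AB,3}^\pm$; the continuity of $\partial_\nu(\tr_g k)$ is exactly what closes the $\pi_{AA;3}^\pm$ step; the Ricci identity applied to $X_{\mathbf a}$ shows $\mathrm{Rm}$ is continuous; and a direct computation of $\partial_t^2 h_{AB}$ gives $g\in C^{2,1}$. Finally, the full Codazzi identity $\nabla_i k_{jl}=\nabla_j k_{il}$ you invoke is not available a priori---only the traced constraint $\mathrm{div}\,\pi=J$ is---so that part of your loop also needs repair.
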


In particular, Theorem \ref{main} holds for manifolds with vanishing second homology which imposes control on the topology of level sets of $u$. For smooth initial data sets, the construction of $M_{ext}$ is shown in \cite{HKK} if the dominant energy condition holds on $M$. Their proof adopts \cite{ADGP} to show that an open subset of $M$ containing an asymptotically flat end admits PSC topology by the density theorem of \cite{EHLS}, together with the existence, regularity and compactness properties of apparent horizons (\cite{E1}, \cite{E2}, \cite{E3}).  If an initial data set admits singularities, the density theorem and the existence of marginally trapped surfaces are not known. Therefore, it would be of interest to study the Jang equation on singular initial data sets.

\

The positive mass theorem with corners is key to the study of properties (mass, geometry, etc.) of compact initial data sets by connecting them to those of asymptotically flat ones.  If a physical system is not isolated or cannot be viewed from infinity where asymptotic symmetry exists, e.g. compact manifolds with boundary, the ADM mass is not well defined. Different notions of quasilocal mass have been suggested. By the Hamiltonian formulation, the Brown-York mass (\cite{BY}) was proposed for the time-symmetric case. Shi and Tam in \cite{ST} have constructed a quasispherical asymptotically flat extension for compact manifolds which is scalar flat and has prescribed mean curvarture on the boundary, then they applied the Riemannian positive mass theorem for Lipschitz metrics to prove that the Brown-York mass is positive. For the spacetime case, the Liu-Yau mass (\cite{LY1}, \cite{LY2}, see also \cite{KI}) and the Wang-Yau mass (\cite{WY2}) which generalise the Brown-York mass are proved to be positive by considering the Jang graph over a compact initial data set with the dominant energy condition and applying Shi-Tam's result. 

\

A natural question arises, that is whether the spacetime positive mass theorem with corners can help show positivity of some quasilocal quantities. Together with \cite{MST}, we have the following results.  

\begin{cor}\label{qm}
Let $(\Omega^3, g, k)$ be a compact initial data set satisfying the dominant energy condition. Assume there exists $\MS$, a finite (possibly empty) disjoint union of connected weakly trapped surfaces, such that $H_2(\Omega_{ext}, \MS, \mathbb{Z})=0$, where $\Omega_{ext}$ denotes the portion of $\Omega$ outside $\MS$. Suppose $\S=\SS$ is a smooth surface with finitely many components with Gaussian curvature $\kappa>0$ and mean curvature $H$ with respect to the outward normal $\nu$. Denote the mean curvature of an isometric embedding of $\S$ into $\R^3$ with respect to the outward normal by $H_0$. If $H > |\omega|$, where $\omega=\pi(\cdot,\nu)$, then 
$$\MW(\S):=\frac{1}{8\pi}\int_\S H_0-(H-|\omega|) \geq 0.$$ 
If $\MW(\S)=0$, then $\S$ is connected, $\Omega$ is diffeomorphic to a domain in $\R^3$ and can be isometrically embedded into Minkowski space. 
\end{cor}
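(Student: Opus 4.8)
The plan is to realize $\MW(\S)$ as a quasilocal mass of Shi--Tam type and to reduce Corollary~\ref{qm} to Theorem~\ref{main} together with Corollary~\ref{rigid}, by gluing $\Omega$ onto an asymptotically flat exterior along $\S$ via the construction of \cite{MST}. Since every component of $\S=\SS$ has positive Gaussian curvature $\kappa$, each is a topological $2$-sphere (Gauss--Bonnet) and, by the solution of the Weyl problem, admits an essentially unique isometric embedding into $\R^3$ as a strictly convex surface; its image bounds a convex body $K$ whose exterior $\R^3\setminus K$ is asymptotically flat with outer boundary of mean curvature $H_0>0$. Foliating $\R^3\setminus K$ by the outward parallel surfaces of this embedding and applying \cite{MST}, I would solve the associated quasi-spherical parabolic equation to produce, over each component of $\S$, a scalar-flat asymptotically flat metric $\bar g$ on $[0,\infty)\times\S$ whose induced metric at $\{0\}\times\S$ equals $g|_\S$ and whose boundary mean curvature with respect to the outward (to-infinity) normal is the prescribed positive function $H-|\omega|$ --- positivity here being exactly the hypothesis $H>|\omega|$. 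Setting $\bar k\equiv0$ on the exterior pieces and gluing them to $\Omega$ along the components of $\S$ yields a complete initial data set $(M,g,k)$ which is asymptotically flat, has Lipschitz metric, is smooth up to $\S$ from either side, and whose $k$ is allowed to jump across $\S$: this is data satisfying (1)--(4) of Theorem~\ref{main} with corner $\tilde\S=\S$.

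Next I would check the remaining hypotheses of Theorem~\ref{main} at each asymptotically flat end $\mathcal{E}$ of $M$. Since the exterior piece $[0,\infty)\times\S$ deformation retracts onto $\{0\}\times\S\subset\Omega_{ext}$ relative to $\MS$, the gluing preserves the relevant topology: $H_2(M_{ext},\MS,\mathbb{Z})\cong H_2(\Omega_{ext},\MS,\mathbb{Z})=0$, and $\MS$ remains a finite disjoint union of connected weakly trapped surfaces disjoint from $\S$. The dominant energy condition holds on $M_{ext}\setminus\S$: on the $\Omega$ side it is assumed, and on the exterior side $\bar g$ is scalar flat with $\bar k=0$. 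By construction $H_-=H$, $H_+=H-|\omega|$, $\pi_-=\pi$ and $\pi_+=0$ on $\S$, so $\omega_+=0$ and
\begin{equation*}
(H_--H_+)-|\omega_--\omega_+|=|\omega|-|\omega|=0\ge0\qquad\text{on }\S,
\end{equation*}
i.e.\ the corner condition of Theorem~\ref{main} holds with equality. Since $k\equiv0$ near every end we also have $P=0$, and Theorem~\ref{main} gives $E\ge|P|=0$ for each end of $M$.

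To obtain the sharp bound I would invoke the monotonicity built into \cite{MST}: along the exterior foliation $\{\S_t\}_{t\ge0}$ the quantity $m(t)=\frac{1}{8\pi}\int_{\S_t}\big(H_0^{(t)}-H_{\bar g}^{(t)}\big)$ is non-increasing, converges to the ADM energy of the corresponding end as $t\to\infty$, and at $t=0$ equals $\frac{1}{8\pi}\int_{\S'}\big(H_0-(H-|\omega|)\big)$ on the relevant component $\S'$ of $\S$. Hence $\frac{1}{8\pi}\int_{\S'}\big(H_0-(H-|\omega|)\big)\ge E\ge0$ for each component, and summing over the components of $\S$ yields $\MW(\S)\ge0$. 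If $\MW(\S)=0$, each of these non-negative summands vanishes, so $E=|P|=0$ at the corresponding end; Corollary~\ref{rigid} applied to $(M,g,k)$ then gives that $M$ has a single end --- hence $\S$ is connected, since distinct components of $\S$ would produce distinct ends --- that $H_2(M,\mathbb{Z})=0$, and that $(M,g,k)$ arises from an isometric embedding into a flat spacetime. Because $M$ deformation retracts onto $\Omega$ we get $H_2(\Omega,\mathbb{Z})=0$, and restricting the isometric embedding to $\Omega\subset M$ (with uniqueness of the spacetime development from the Gauss--Codazzi equations) shows $N$ is flat over $\Omega$. If moreover $\Omega_{ext}$ is diffeomorphic to the complement of a compact set in a domain of $\R^3$, then capping the now connected boundary $\S$ with the Shi--Tam exterior makes $M_{ext}$ diffeomorphic to the complement of a compact set in $\R^3$, and the last clause of Corollary~\ref{rigid} gives that $M$, hence $\Omega$, is diffeomorphic to a domain in $\R^3$ with the data arising from an isometric embedding into Minkowski space.

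The main obstacle is the first step: producing the exterior extension carrying all the required properties at once --- prescribed induced metric $g|_\S$, prescribed positive boundary mean curvature $H-|\omega|$, asymptotic flatness, scalar flatness (so that the dominant energy condition holds there with $\bar k=0$), and, crucially, the monotone quasilocal mass whose limit is the ADM energy. This is precisely the spacetime Shi--Tam-type construction imported from \cite{MST} (built on the Weyl embedding and the quasi-spherical equation). Granting it, the remaining work --- verifying that the corner inequality of Theorem~\ref{main} holds with equality, that the topological hypotheses survive the gluing, and that the zero-mass case feeds into Corollary~\ref{rigid} --- is routine bookkeeping; the only other mild point to watch is the regularity of the prescribed function $H-|\omega|$ where $\omega$ vanishes, which can be handled by approximation if needed.
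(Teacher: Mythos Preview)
Your proposal is correct and follows essentially the same approach as the paper: build a Shi--Tam quasi-spherical scalar-flat exterior with boundary mean curvature $H-|\omega|$, glue it to $\Omega$ with $\bar k\equiv 0$, verify the corner condition $(H_--H_+)-|\omega_--\omega_+|=0$, and invoke Theorem~\ref{main} together with the Shi--Tam monotonicity and Corollary~\ref{rigid}. The only minor discrepancy is that the extension construction and monotonicity you need come from \cite{ST} rather than \cite{MST}; also, where you argue the relative $H_2$ is preserved by a deformation retract, the paper instead enlarges $\mathcal{S}$ by adding large coordinate spheres in the unwanted ends (which are weakly trapped since $k=0$ there) so as to apply Theorem~\ref{main} to each end in turn.
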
 

\begin{cor}
Let $(\Omega^3, g, k)\subset \R^{3,1}$ be a compact initial data set.  If  $\S=\p \O$ is smooth and connected, has positive Gaussian curvature and $H>|tr_{\S}k|$, where $H$ is the mean curvature of $\S$ with respect to the outward normal, then $\MW(\S)\geq 0$.  Moreover, $\MW(\S) = 0$ if and only if $\O$ is a domain within a hyperplane in $\R^{3,1}$. 
\end{cor}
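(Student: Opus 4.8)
The plan is to deduce the statement from Corollary \ref{qm} applied with $N=\R^{3,1}$, so that the bulk of the work lies in verifying the hypotheses of that corollary; an alternative route, in the spirit of the Liu--Yau proof recalled in the introduction, is to pass to the Jang graph over $\O$ and invoke the Riemannian positive mass theorem with corners, and I would fall back on it for the step that turns out to be delicate.

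First, since $\R^{3,1}$ is flat, the induced data $(g,k)$ on the spacelike hypersurface $\O$ solves the vacuum constraint equations, so the energy and momentum densities vanish identically, $\mu\equiv0$ and $J\equiv0$; in particular the dominant energy condition holds on $\O$ with equality. Next I would take $\MS=\emptyset$, so that $\Oe=\O$ and the topological hypothesis of Corollary \ref{qm} reduces to $H_2(\O,\mathbb Z)=0$. This holds because the timelike translation field of $\R^{3,1}$ is nowhere tangent to the spacelike $\O$, so projection along it onto a spacelike hyperplane $\R^3$ realises $\O$ as a compact domain $D\subset\R^3$; since $\p\O=\S$ is connected so is $\p D$, and $H_2(D,\mathbb Z)=0$ for any such $D$ (e.g.\ by Mayer--Vietoris for $S^3=D\cup_{\p D}\overline{S^3\setminus D}$). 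The remaining hypothesis $\k>0$ on $\S$ is assumed, and by the classical isometric embedding and rigidity theorems it also guarantees that $\S$ is a topological sphere and that $(\S,g|_\S)$ embeds isometrically and uniquely into $\R^3$, so that $H_0$ is well defined.

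The heart of the matter, and the step I expect to be the main obstacle, is to bridge the gap between the assumption $H>|\tr_\S k|$ and the condition $H>|\omega|$, $\omega=\pi(\cdot,\nu)$, demanded by Corollary \ref{qm}: the quantity $|\tr_\S k|$ has invariant meaning while $|\omega|$ is gauge dependent. Writing $\vec H$ for the mean curvature vector of $\S$ in $\R^{3,1}$ and $\{n,\nu\}$ for the future unit normal of $\O$ along $\S$ and the outward unit normal of $\S$ in $\O$, one has (with the paper's conventions) $\vec H=H\nu-(\tr_\S k)\,n$, so $|\vec H|^2=H^2-(\tr_\S k)^2$ and the hypothesis says exactly that $\vec H$ is spacelike with $H>0$, a condition on $\S\subset\R^{3,1}$ alone. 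One is then free to fill $(\S,g|_\S)$ by a different compact initial data set still satisfying the dominant energy condition: taking the spacelike slice whose future unit normal along $\S$ is a constant timelike vector of $\R^{3,1}$ (equivalently, solving the Jang equation over $\O$ with the boundary condition that makes the Jang graph meet $\S$ in the right way) kills the normal connection $1$-form of $\S$, hence makes $\omega\equiv0$ there, while the mean curvature of $\S$ in the new slice stays strictly positive precisely because $|\vec H|^2=H^2-(\tr_\S k)^2>0$. Corollary \ref{qm} then applies to the modified data and produces a nonnegative quantity, and one must check that $\MW(\S)$, computed from the original data, dominates it; this comparison of the two integrands, together with the input of \cite{MST}, is the technical core, and the existence and boundary regularity of the modified slice (respectively, the solvability and boundary behaviour of the Jang equation on the compact piece $\O$) is where the analysis sits.

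For the rigidity statement: if $\MW(\S)=0$ then all the inequalities above are equalities, so the equality case of Corollary \ref{qm} applies to the modified data and forces the comparison to be sharp as well; as in Corollaries \ref{rigid} and \ref{regids}, and using that the ambient spacetime is already the flat $\R^{3,1}$ with $\Oe=\O$ diffeomorphic to a domain in $\R^3$, this forces $k\equiv0$, so $\O$ is a totally geodesic hypersurface of $\R^{3,1}$, i.e.\ a domain in a spacelike hyperplane $\Pi\cong\R^3$. Conversely, if $\O\subset\Pi$ then $k=0$, whence $\omega=0$ and $H$ is the mean curvature of $\S$ inside $\Pi\cong\R^3$; by uniqueness of isometric embeddings of surfaces with positive Gauss curvature this equals $H_0$, and therefore $\MW(\S)=\frac{1}{8\pi}\int_\S(H_0-H)=0$.
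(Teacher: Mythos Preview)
Your strategy of reducing to Corollary~\ref{qm} runs into precisely the obstacle you identify, and your proposed fix does not close the gap. Choosing a new slice whose future unit normal along $\S$ equals a \emph{constant} timelike vector would force $\S$ itself to lie in the hyperplane orthogonal to that vector, which is certainly not assumed; so this does not produce a fill-in with $\omega\equiv0$. The Jang-graph alternative you mention is closer in spirit, but you do not carry it out, and what it actually delivers is positivity of $m_{LY}(\S)$ rather than an application of Corollary~\ref{qm} to modified data. Your rigidity step is likewise incomplete: even if Corollary~\ref{qm} applied, its equality clause only says the ambient spacetime is flat over $\O$---which is already the hypothesis here---and does not by itself force $k\equiv0$ on $\O$.

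The paper takes a different route that bypasses Corollary~\ref{qm} entirely. It first invokes the pointwise comparison established just before the statement, $\MW(\S)\ge m_{LY}(\S)=\tfrac{1}{8\pi}\int_\S\big(H_0-|\vec H|\big)$, valid since $H>|\tr_\S k|$. Because $\vec H$ is spacelike, \cite{MST} supplies a \emph{maximal} spacelike hypersurface $(\tilde\O,\tilde g,\tilde k)$ in $\R^{3,1}$ spanning $\S$ with $\tilde H>0$; maximality gives $\tilde R=|\tilde k|^2\ge0$. Then $|\vec H|\le\tilde H$ yields $m_{LY}(\S)\ge\tfrac{1}{8\pi}\int_\S(H_0-\tilde H)$, and the original Shi--Tam theorem applies directly to $(\tilde\O,\tilde g)$---so no topological hypothesis on $\O$ is ever needed. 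For rigidity, equality in Shi--Tam makes $(\tilde\O,\tilde g)$ a flat Euclidean domain, whence $\tilde R=0$ forces $\tilde k\equiv0$ and the slice lies in a hyperplane; the converse direction is exactly as you wrote.
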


In general relativity, (apparent) horizons are intriguing objects. The Penrose inequality which suggests the quantitative relations between the ADM mass and the area of horizons is proved for the Riemannian case in \cite{HI} and \cite{B}.  Furthermore, the hoop conjecture initiated a search for suitable quantities to detect the existence of (apparent) horizons. Schoen and Yau in \cite{SY3} and Yau in \cite{Y} defined a kind of radius of a bounded region. Under certain assumptions on the energy condition, they have shown an upper bound of the radius of the region in which apparent horizons cannot exist. For the time-symmetric case, Shi and Tam in \cite{ST3} showed that the Brown-York mass satisfies a comparison theorem with the Hawking mass and a localised Penrose inequality. Together with the Shi-Tam mass, these indicate sufficient conditions in which horizons exist in the spirit of the hoop conjecture. Recently, the result was further generalised in \cite{ALY} for the spacetime case for the Liu-Yau mass and the Wang-Yau mass. As inspired by their methods, we will show that $\MW(\S)$ also satisfies a comparison theorem with the Hawking mass. And as a corollary, we obtain a localised Penrose-type inequality. These two results can help reveal the internal geometry, in particular the existence and non-existence of minimal surfaces of an initial data set. 

\begin{thm}\label{CTI}
Let $(\Omega^3,g,k)$ be admissible\footnote{See Section \ref{Applications of W} for the definition.}. Then, for any connected minimising hull $E$ in $\Omega$ where $E\subset \subset \Omega$ with $C^{1,1}$ boundary $\p E$, we have 
$$\MW(\S)\geq m_H(\p E),$$
where $m_H$ stands for the Hawking mass. 
\end{thm}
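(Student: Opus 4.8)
The plan is to adapt the comparison-theorem strategy of Shi--Tam (\cite{ST3}) and its spacetime generalization (\cite{ALY}) to the quasilocal mass $\MW(\S)$. Recall that for a connected minimising hull $E \subset\subset \Omega$ with $C^{1,1}$ boundary $\p E$, one runs weak inverse mean curvature flow (IMCF) starting from $\p E$; since $E$ is a minimising hull, the flow immediately becomes smooth and stays embedded, and the Geroch monotonicity formula shows that $m_H(\Sigma_t)$ is nondecreasing along the flow, where $\Sigma_t$ denotes the flow surface at time $t$. The idea is then to ``cap off'' the region $E$ and its IMCF-foliated exterior collar inside $\Omega$ by a suitable asymptotically flat extension, apply the positivity of $\MW$ (Corollary~\ref{qm}) to the capped region with boundary a large flow surface $\Sigma_T$, and let $T\to\infty$. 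First I would establish that along the IMCF the Hawking mass converges to (a quantity controlled by) $\MW(\S)$: concretely, one shows $\MW(\Sigma_t)$ is monotone in $t$ using the Shi--Tam-type construction of a quasispherical extension outside each $\Sigma_t$ together with Corollary~\ref{qm} applied to the region bounded by $\p E$ inside and $\Sigma_t$ outside, giving $\MW(\partial E)\ge \MW(\Sigma_0^-)$ and more generally monotonicity; comparing the leading asymptotics identifies the limit with $m_H$.

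The key steps, in order, are as follows. (i) Use the admissibility hypothesis to guarantee that weak IMCF from $\p E$ exists, that $\p E$ is (outward) minimising so the flow is smooth for $t>0$, that the surfaces $\Sigma_t$ have positive Gauss curvature for $t$ large (Weyl embeddability), and that the mean curvature $H_t$ of $\Sigma_t$ dominates $|\omega_t|$ where $\omega_t = \pi(\cdot,\nu_t)$ — this last point is where the definition of admissible must be invoked and where the spacetime term $|\omega|$ enters. (ii) For each such $\Sigma_t$, invoke the Shi--Tam quasispherical construction to build an asymptotically flat, scalar-flat collar glued along $\Sigma_t$ with prescribed mean curvature, and extend $k$ trivially (or as in \cite{MST}) so that the dominant energy condition and the corner inequality $(H_--H_+) - |\omega_- - \omega_+|\ge 0$ hold across $\Sigma_t$; then Corollary~\ref{qm} gives $\MW(\Sigma_t)\ge 0$, and moreover comparing the exterior region bounded by $\p E$ and $\Sigma_t$ yields the monotonicity $\MW(\Sigma_s) \le \MW(\Sigma_t)$ for $s<t$, hence $\MW(\S) = \MW(\Sigma_0) \ge \lim_{t\to\infty}\MW(\Sigma_t)$ — wait, the inequality direction must be checked: actually one wants $\MW(\partial E)\le \MW(\Sigma_t) \to \MW(\S)$, so the correct statement is $m_H(\partial E) \le \lim_{t\to\infty} m_H(\Sigma_t) \le \MW(\S)$ by combining Geroch monotonicity with the identification of the limit. (iii) Show $\lim_{t\to\infty} m_H(\Sigma_t) = \MW(\S)$, or more precisely $\le \MW(\S)$: this uses that for large $t$ the Shi--Tam extension has ADM-type mass converging to $\MW(\Sigma_t)$ while Geroch monotonicity along the extended IMCF forces $m_H(\Sigma_t)$ below that value, and $\MW(\Sigma_t)\le \MW(\S)$ by the reverse-layer monotonicity from step (ii). (iv) Chain the inequalities: $m_H(\p E) \le \lim_t m_H(\Sigma_t) \le \limsup_t \MW(\Sigma_t) \le \MW(\S)$.

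The main obstacle I expect is step (ii)–(iii): establishing the correct monotonicity of $\MW$ under adding IMCF layers and identifying the limit with $m_H$. Unlike the Brown--York/Liu--Yau setting, $\MW$ is defined using the isometric embedding into $\R^3$ and the physical mean curvature corrected by $|\omega|$, so one must track how both the Shi--Tam extension's boundary data and the momentum term $\omega_t$ behave along the flow, and verify that the corner condition of Theorem~\ref{main} survives the gluing for all $t$ (in particular that $H_{ST} \ge |\omega_t|$ on the glued surface, using that the Shi--Tam mean curvature exceeds the physical $H_t$ by design and $|\omega_t|$ is controlled). A secondary difficulty is the low regularity: $\p E$ is only $C^{1,1}$, so the initial behavior of IMCF and of the Hawking mass must be handled via the weak formulation of \cite{HI}, and the Gauss-curvature positivity needed for Weyl embedding holds only asymptotically, so one restricts to $t \ge t_0$ and absorbs the finitely many early layers using Geroch monotonicity of $m_H$ directly. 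Finally, one must confirm that the ``admissible'' hypotheses are exactly strong enough to close all these gaps, matching the conventions set in Section~\ref{Applications of W}.
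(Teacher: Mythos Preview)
Your proposal has a genuine structural gap: you plan to run weak IMCF from $\partial E$ \emph{inside} the compact domain $\Omega$ and then let $t\to\infty$, but $\Omega$ has boundary $\Sigma$, so the flow cannot be continued indefinitely and there is no asymptotic regime in which the flow surfaces $\Sigma_t$ acquire positive Gauss curvature or in which $m_H(\Sigma_t)$ can be identified with (or bounded by) $\MW(\Sigma)$. Relatedly, the conditions $H_t>|\omega_t|$ and $\kappa_t>0$ that you need on $\Sigma_t$ to invoke Corollary~\ref{qm} are only assumed on $\Sigma=\partial\Omega$ by admissibility; they need not hold on intermediate IMCF surfaces. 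Your step (iii), linking $\lim_t m_H(\Sigma_t)$ to $\MW(\Sigma)$, therefore has no mechanism: the flow never reaches $\Sigma$, and there is no monotone quantity tying the two boundaries together.

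The paper's argument proceeds quite differently and this is precisely why the admissibility conditions include the smallness bounds on $\|K^2\|_{L^{3/2}}$ and $\|K^2\|_{L^{6/5}}$, which your outline never uses. One first glues a Shi--Tam asymptotically flat extension $(M_+,g_+,0)$ to $\Sigma$ using $u=H_0/H$ (not $H_0/(H-|\omega|)$), obtaining $m_{BY}(\Sigma)\ge m_{ADM}(g_+)$. The glued manifold $(\tilde M,\tilde g,\tilde k)$ has a corner at $\Sigma$ and does \emph{not} have nonnegative scalar curvature; one mollifies $(\tilde g,\tilde k)$ across the corner (Lemma~\ref{MollifiedIDS}) and then solves for a conformal factor $u_\delta\ge 1$ so that $\hat g_\delta=u_\delta^4 g_\delta$ has $R_{\hat g_\delta}\ge 0$ on $\tilde M\setminus F$ (Lemma~\ref{conf}). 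The $L^{3/2}$ bound $d_1$ is what makes this conformal problem solvable, and the $L^{6/5}$ bound $d_2$ is what controls the ADM-mass shift $2A_\delta$ so that $\MW(\Sigma)\ge \overline{\lim}_{\delta\to 0}\, m_{ADM}(\hat g_\delta)$; here the extra term $\tfrac{1}{8\pi}\int_\Sigma|\omega|$ in $\MW$ is exactly what absorbs the conformal mass correction. Only then does one run weak IMCF from (an approximation of) $\partial E$ in the \emph{asymptotically flat} manifold $(\tilde M,\hat g_\delta)$, where Geroch monotonicity gives $m_{ADM}(\hat g_\delta)\ge m_H(\partial E)$ after a careful limiting argument in $\theta$, $\varepsilon$, $\delta$. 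Without the mollification and conformal-change steps you cannot apply Geroch monotonicity (the glued metric need not have $R\ge 0$), and without running the flow in the extended manifold you cannot reach the ADM mass at infinity.
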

\begin{cor}
Let $(\O^3,g,k)$ be admissible. Suppose that $S$ is an outward minimising surface in $\Omega$, then 
$$\MW(\S)\geq\sqrt{\frac{|S|}{16\pi}}.$$
\end{cor}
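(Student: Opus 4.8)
The plan is to derive this as a direct consequence of Theorem~\ref{CTI}, applied with the minimising hull taken to be the region enclosed by $S$ itself. We may assume $S$ is connected (otherwise one replaces it by a suitable component so that the region it bounds is connected). Since $S$ is outward minimising, the region $E$ that it bounds inside $\Omega$ is a minimising hull in the sense used in Theorem~\ref{CTI}; it is compactly contained in $\Omega$ because $S$ lies in the interior, and its boundary is $S$, which is (at least) $C^{1,1}$. Thus Theorem~\ref{CTI} applies to $E$ and yields
\[
\MW(\S)\ \geq\ m_H(\partial E)\ =\ m_H(S).
\]

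It remains to check $m_H(S)\geq\sqrt{|S|/16\pi}$. Writing the Hawking mass in the form
\[
m_H(S)=\sqrt{\frac{|S|}{16\pi}}\left(1-\frac{1}{16\pi}\int_S\bigl(H^2-(\tr_S k)^2\bigr)\right)
\]
(the version for which the inverse mean curvature flow monotonicity used in the proof of Theorem~\ref{CTI} is available), this reduces to showing that the correction integral over $S$ is non-positive. For the surfaces relevant here this is automatic: if $S$ is minimal then $H\equiv0$, and more generally if $S$ is weakly trapped then $H^2-(\tr_S k)^2=\theta^+\theta^-\leq0$ pointwise, so in either case the bracket is $\geq1$ and $m_H(S)\geq\sqrt{|S|/16\pi}$. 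Combining this with the previous display proves the inequality. If one also wants the equality case, it follows from the rigidity clause of Theorem~\ref{CTI}: equality forces $\partial E=S$ to have vanishing Hawking mass and the relevant extension to be flat.

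The main difficulty is not the chain of inequalities but the geometric-measure-theoretic input behind the first step: one needs that an outward minimising $S$ genuinely bounds a connected minimising hull $E\subset\subset\Omega$ with $C^{1,1}$ boundary equal to $S$ — so that no "jumping" of the minimising hull to a strictly larger region occurs — which rests on the regularity and containment properties of minimising hulls in the sense of Huisken--Ilmanen and on the very definition of outward minimality. One must also confirm that $E$ lies within the region where the construction underlying Theorem~\ref{CTI} (the Shi--Tam-type asymptotically flat extension of $\Omega$ and the associated flow) is valid, in particular that passing to a connected outermost piece in the disconnected case is compatible with the admissibility hypotheses; and one should verify that the normalisation of the Hawking mass appearing in Theorem~\ref{CTI} is indeed the one carrying the $H^2-(\tr_S k)^2$ correction, so that the sign in the final step comes out favourably.
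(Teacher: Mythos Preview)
Your overall strategy---apply Theorem~\ref{CTI} to the region $E$ bounded by $S$ and then evaluate $m_H(S)$---is exactly the paper's approach. However, there is a genuine error in the second step.

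The Hawking mass used throughout this paper (and in Theorem~\ref{CTI}) is the purely \emph{Riemannian} one,
\[
m_H(\partial E)=\sqrt{\frac{|\partial E|}{16\pi}}\left(1-\frac{1}{16\pi}\int_{\partial E}H^2\right),
\]
with no $(\tr_S k)^2$ correction. The inverse mean curvature flow monotonicity invoked in the proof of Theorem~\ref{CTI} is run in the conformally deformed Riemannian manifold $(\tilde M,\hat g_\delta)$, which has $R_{\hat g_\delta}\geq 0$; the symmetric tensor $k$ plays no role there. So your displayed formula for $m_H(S)$ is not the one in force, and the subsequent discussion of weakly trapped surfaces and the sign of $\theta^+\theta^-$ is beside the point: ``outward minimising'' is a Riemannian area-minimisation condition and does not imply $S$ is weakly trapped.

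What makes the conclusion work in the paper is simply that an outward minimising surface in this context is \emph{minimal}, so $H\equiv 0$ and hence $m_H(S)=\sqrt{|S|/16\pi}$ on the nose. You do mention the minimal case, but you should drop the spacetime Hawking mass formula and the weakly-trapped alternative, and instead make explicit that $S$ being (outward minimising and) minimal gives $\int_S H^2=0$ directly. With that correction your proof coincides with the paper's.
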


Moreover, by comparing $\MW(\S)$ with the Liu-Yau mass, we can see that the localised spacetime Penrose inequality and relevant results of MOTS in \cite{ALY} also holds for $\MW(\S)$ if $(\Omega,g,k)$ satisfies certain admissibility conditions.  

\

For geometry on compact manifolds with boundary, Gromov proposed the following conjecture. 
\begin{conj}\label{Gromov}(\cite{Gro19} Sect 3.12.2 III., IV.) Let $(M, g)$ be a compact Riemannian manifold with scalar curvature $R \geq \sigma$. Then there exists $\Lambda$ depending only on $\sigma$ and the intrinsic geometry of $(\p M, g|_{T(\p M)})$ such that
\be
\int_{\p M} H \leq \Lambda,
\ee
where $H$ is the mean curvature of the boundary $\p M$ in $(M, g)$ with respect to the outward unit normal vector.  
\end{conj}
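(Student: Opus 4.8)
The plan is to extract Conjecture~\ref{Gromov}, in the regime where the present methods bite, directly from the positive mass theorem with corners; in full generality the statement is known to be hard, so I will be explicit about which case is reached. Suppose first that $\sigma=0$, that $(\p M,g|_{T(\p M)})$ has positive Gauss curvature, and that the mean curvature $H$ of $\p M$ in $(M,g)$ is positive (otherwise replace $H$ by $\max(H,\eps)$ and send $\eps\downarrow0$ at the end). Taking $k\equiv0$ turns the dominant energy condition into $R_g\ge0$ and makes $\omega=\pi(\cdot,\nu)\equiv0$, so, provided the mild topological hypothesis $H_2(M,\mathbb Z)=0$ holds (automatic when $M$ is a ball, in which case one may take $\MS=\emptyset$), Corollary~\ref{qm} applies with $\Omega=M$. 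The Weyl embedding theorem realises $(\p M,g|_{T(\p M)})$ isometrically as a convex surface in $\R^3$ with mean curvature $H_0>0$, unique up to rigid motion, so $\int_{\p M}H_0$ is a functional of the intrinsic metric of $\p M$ alone. Corollary~\ref{qm} --- which is itself proved by gluing $M$ along $\p M$ to a Shi--Tam quasispherical exterior and invoking Theorem~\ref{main} --- then gives
\[
\MW(\p M)=\frac{1}{8\pi}\int_{\p M}\bigl(H_0-H\bigr)\ge 0,
\]
i.e.\ $\int_{\p M}H\le\int_{\p M}H_0=:\Lambda$, which is exactly the claimed estimate (the dependence of $\Lambda$ on $\sigma$ being vacuous when $\sigma=0$); the rigidity clause moreover forces $M$ to embed isometrically in $\R^{3,1}$ as a piece of a spacelike hyperplane in the equality case.

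For general $\sigma$ the same mechanism should go through with the Euclidean exterior replaced by a space-form background: for $\sigma<0$ one embeds $(\p M,g|_{T(\p M)})$ into the hyperbolic space of the relevant curvature, glues on a Shi--Tam-type scalar-controlled asymptotically hyperbolic collar with prescribed boundary mean curvature, and invokes the AdS analogue of \eqref{ADMLB} together with the corner inequality; for $\sigma>0$ one works against a round-sphere background, where a bound on $\mathrm{diam}(\p M)$ must be built into the embedding step. In every case $\Lambda$ emerges as the total mean curvature of the model embedding, a function of $\sigma$ and $g|_{T(\p M)}$ only, as the conjecture predicts.

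The main obstacle is exactly the convexity hypothesis underlying the embedding step: the Weyl (resp.\ Alexandrov--Pogorelov) theorem supplies the model exterior only when the Gauss curvature of $\p M$ exceeds that of the background, and for less convex boundaries no isometric embedding into a space form exists, so the collar-gluing construction loses its starting point. To reach those boundaries one must instead produce, by solving a geometric PDE directly, an exterior fill-in with $R\ge\sigma$ realising $(\p M,g|_{T(\p M)})$ with arbitrarily large boundary mean curvature, and then contradict the non-existence of such fill-ins encoded in \eqref{ADMLB} --- this is where the fill-in constructions of \cite{SWWZ} and \cite{SWW}, and the problem of lifting them to the spacetime/DEC setting, become the decisive difficulty. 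Absent such an extension, the conjecture in full strength appears to demand Gromov's $\mu$-bubble (or warped product) techniques rather than the positive-mass-with-corners circle of ideas, and I would not expect the method of this paper to settle it unconditionally.
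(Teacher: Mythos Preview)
The statement you are addressing is labeled a \emph{Conjecture} in the paper, and the paper does not claim to prove it; there is consequently no ``paper's own proof'' to compare against. What the paper does contribute are the \emph{partial} results of Section~\ref{DECfillin} (the three fill-in theorems), which give a spacetime/DEC analogue of the partial answers in \cite{SWWZ} and \cite{SWW}: rather than an integral bound $\int_{\p M}H\le\Lambda$, those results produce a pointwise threshold $C_0=C_0(\Sigma,\gamma)$ such that $H-f\ge C_0$ rules out any DEC fill-in satisfying the topological hypothesis $(T)$. The mechanism is the one you sketch in your final paragraph --- build an asymptotically flat exterior with negative (or zero) ADM energy and controlled corner data, then contradict Theorem~\ref{main} --- but the exterior is not the Shi--Tam quasispherical metric attached to a Weyl embedding; it is the collar constructions of \cite{SWWZ} Lemma~2.1/2.4 and \cite{SWW} Lemma~2.1, which do not require positive Gauss curvature of $\p M$.

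Your own argument is correct as far as it goes: for $\sigma=0$, $\p M$ of positive Gauss curvature, and $H_2(M,\mathbb Z)=0$, Corollary~\ref{qm} with $k\equiv0$ is exactly the Shi--Tam inequality $\int_{\p M}H\le\int_{\p M}H_0$, and $\Lambda=\int_{\p M}H_0$ is indeed determined by the intrinsic boundary metric alone via the Weyl embedding. This is a genuine (and classical) special case of the conjecture, but it is narrower than the paper's partial results in one respect --- it needs convexity of the boundary --- while sharper in another --- it gives the integral bound the conjecture actually asks for, not a pointwise threshold. Your heuristic for general $\sigma$ via space-form backgrounds is reasonable but, as you acknowledge, does not constitute a proof; and your diagnosis that the convexity hypothesis is the main obstruction, with \cite{SWWZ}/\cite{SWW}-type constructions the natural way around it, matches precisely what the paper pursues. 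In short: you have not proved the conjecture, but neither does the paper; your convex-case argument is valid and complements, rather than reproduces, the paper's Section~\ref{DECfillin} contributions.
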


In \cite{SWWZ} and \cite{SWW}, there was a partial affirmative answer given by the parabolic method to construct a metric extension done in \cite{ST}.  It is shown that if the mean curvature is too large and an NNSC (non-negative scalar curvature) fill-in to a Bartnik data set exists, then there is a contradiction to the Riemannian positive mass theorem with corners. As motivated by the idea of Hamiltonian formulation, with Theorem \ref{main}, we can show non-existence of DEC fill-ins for spacetime Bartnik data sets (\cite{B2}) if the boundary energy is too large.    
\begin{thm} 
Let $D_{SB}:=(\S^2, \gamma, \alpha, H, \beta )$ be a spacetime Bartnik data set where $\S^{2}$ can be embedded into $\R^3$ and $\gamma$ is smooth. There exists a constant $C_0=C_0(\Sigma,\gamma)>0$ such that if $$H-f\geq C_0,$$ 
where $f:=\sqrt{(\tr_{\S}\alpha)^2+|\beta|_{\gamma}^2}$,
then $D_{SB}$ cannot admit a fill-in satisfying both of the following: 
\begin{enumerate}
\item there exists $\MS$, a finite (possibly empty) disjoint union of connected weakly trapped surfaces, such that $H_2(\Omega_{ext}, \MS, \mathbb{Z})=0$, where $\Omega_{ext}$ denotes the portion of $\Omega$ outside $\MS$, 
\item the dominant energy condition.
\end{enumerate}
\end{thm}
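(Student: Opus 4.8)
The plan is to argue by contradiction, gluing a time-symmetric Shi--Tam type exterior onto a hypothetical fill-in and invoking the spacetime positive mass theorem with corners (Theorem~\ref{main}), in the spirit of the proof of Corollary~\ref{qm}. First I would fix an isometric embedding $\iota\colon(\S,\gamma)\hookrightarrow\R^3$ (which exists since $\S$ embeds and $\gamma$ is smooth), let $H_0$ denote the mean curvature of $\iota(\S)$ with respect to the unit normal pointing to infinity, and fix a constant $C_0=C_0(\S,\gamma)>0$ with $C_0>\tfrac{1}{|\S|_\gamma}\int_\S H_0\,dA_\gamma$; such a $C_0$ clearly exists. Suppose, towards a contradiction, that $(\O^3,g,k)$ is a fill-in of $D_{SB}$ satisfying (1) and (2) with $H-f\ge C_0$ on $\S=\p\O$, where $f=\sqrt{(\tr_\S\alpha)^2+|\beta|_\gamma^2}$. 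Writing $\pi=k-(\tr_g k)g$ for the conjugate momentum tensor of $k$ and $\omega:=\pi(\cdot,\nu)$ along $\S$ (with $\nu$ the outward unit normal), a one-line computation in an adapted orthonormal frame gives $\omega(\nu)=-\tr_\S\alpha$ and $\omega|_{T\S}=\beta$, hence $|\omega|=f$.

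Next I would build the exterior. Since $\eta:=H-f\ge C_0>0$ is a smooth positive function on $\S$, the (generalised) Shi--Tam parabolic construction over $\iota(\S)\subset\R^3$ (cf.~\cite{ST}; see also \cite{MST}) produces an asymptotically flat, scalar-flat Riemannian manifold $(N,\bar g)$ with compact inner boundary isometric to $(\S,\gamma)$ whose mean curvature with respect to the normal pointing to infinity equals $\eta$, and for which the monotonicity of the associated Brown--York/Hawking-type mass along the foliation gives
\[
m_{ADM}(\bar g)\ \le\ \frac{1}{8\pi}\int_\S\bigl(H_0-\eta\bigr)\,dA_\gamma\ =\ \frac{1}{8\pi}\int_\S\bigl(H_0-H+f\bigr)\,dA_\gamma\ <\ 0,
\]
the strict inequality being exactly what $H-f\ge C_0$ and the choice of $C_0$ force. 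I equip $N$ with $\bar k\equiv0$, so that $\mu=\tfrac12 R_{\bar g}=0$ and $J=0$ on $N$ and the dominant energy condition holds there.

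I would then glue: set $M:=\O\cup_\S N$, carry the metric $g$ on $\O$ and $\bar g$ on $N$, and the symmetric $(0,2)$-tensor $k$ on $\O$ and $0$ on $N$. Since the induced metrics on $\S$ coincide, the glued metric is continuous, hence Lipschitz, and smooth up to $\S$ from each side, while the tensor may be discontinuous across $\tilde\S:=\S$; thus $(M,g,k)$ meets conditions (1)--(4) of Theorem~\ref{main}. Putting $\Me:=\Oe\cup_\S N$ we have $\p\Me=\MS$, the components of $\MS$ are still weakly trapped (their neighbourhoods, interior to $\O$, are unchanged) and disjoint from $\tilde\S$, and a relative deformation retraction of $\Me$ onto $\Oe$ yields $H_2(\Me,\MS,\mathbb Z)\cong H_2(\Oe,\MS,\mathbb Z)=0$ by hypothesis (1); the dominant energy condition holds on $\Me\setminus\MS$ by hypothesis (2) on the $\O$-side and trivially on $N$. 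At the corner $\S$ the interior data is $H_-=H$, $\omega_-=\omega$ (so $|\omega_-|=f$) and the exterior data is $H_+=\eta=H-f$, $\omega_+=0$, hence
\[
(H_--H_+)-|\omega_--\omega_+|\ =\ \bigl(H-(H-f)\bigr)-|\omega|\ =\ f-f\ =\ 0\ \ge\ 0 .
\]
Theorem~\ref{main} then applies to the asymptotically flat end of $M$ --- which is the end of $N$ --- and gives $E\ge|P|\ge0$ for its ADM energy. But $k\equiv0$ near that end, so $E=m_{ADM}(\bar g)<0$ by the first display, a contradiction. Therefore $D_{SB}$ cannot admit a fill-in satisfying (1) and (2).

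I expect the main obstacle to be the first technical input: producing the exterior $(N,\bar g)$ with all the listed properties --- asymptotically flat, scalar-flat, realising the boundary metric $\gamma$ and an \emph{arbitrary prescribed positive} (possibly large and non-constant) boundary mean curvature $\eta=H-f$ --- together with the mass bound $m_{ADM}(\bar g)\le\tfrac1{8\pi}\int_\S(H_0-\eta)\,dA_\gamma$. For $(\S,\gamma)$ of positive Gaussian curvature this is the classical Shi--Tam construction via the Weyl embedding; the assumptions that $\S$ embeds in $\R^3$ and $\gamma$ is smooth are precisely what allows one to run the appropriate generalisation, and verifying long-time existence of the parabolic flow, the asymptotics, and the monotonicity for a merely positive prescribed boundary mean curvature is the technical heart of the argument. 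The remaining ingredients --- continuity of the glued metric, transfer of the homology and weakly-trapped-surface hypotheses under gluing, and the identification of the ADM energy of $M$ with $m_{ADM}(\bar g)$ --- are routine.
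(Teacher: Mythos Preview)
Your argument has a genuine gap at the very first step: you claim an \emph{isometric} embedding $\iota:(\S,\gamma)\hookrightarrow\R^3$ exists ``since $\S$ embeds and $\gamma$ is smooth'', but the hypothesis only provides a \emph{smooth} embedding of the surface $\S$ into $\R^3$, not an isometric one of $(\S,\gamma)$. An arbitrary smooth metric on a closed surface need not isometrically embed in $\R^3$; the Nirenberg--Pogorelov solution of the Weyl problem requires positive Gaussian curvature, which is not assumed here (and $\S$ need not even be a sphere). Without an isometric embedding onto a convex body, the Shi--Tam quasi-spherical construction you invoke does not get off the ground: the equidistant foliation, the parabolic equation for $u$, and the monotonicity of the mass functional $Q(\S_r)$ all rely on convexity of the model hypersurface in Euclidean space. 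You correctly flagged this as the ``main obstacle'', but the assertion that the stated hypotheses ``are precisely what allows one to run the appropriate generalisation'' is unsupported.

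The paper avoids this obstruction by a different mechanism, following \cite{SWW}. Given only a smooth embedding $F:\S\hookrightarrow\R^3$, one rescales so that $\gamma_1:=\lambda_0^2 F^*(g_{Euc})>\gamma$ and then invokes \cite{SWW} Lemma~2.1 to build a positive-scalar-curvature \emph{collar} $(\S\times[0,1],\hat g)$ joining $(\S,\gamma)$ to $(\S,\gamma_1)$, with mean curvatures $h_0,h_1$ on the two ends satisfying $h_1>\tilde h$, the Euclidean mean curvature of $\lambda_0 F(\S)$. One then glues the hypothetical fill-in to this collar, and the collar to the \emph{flat} Euclidean exterior of $\lambda_0 F(\S)$, obtaining an asymptotically flat initial data set with two corners and $E_{ADM}=0$. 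Taking $C_0=\max(-h_0)$ (a constant depending only on $(\S,\gamma)$ through the collar) forces the corner condition $H-(-h_0)-f\ge 0$ at the inner corner, while $h_1-\tilde h>0$ handles the outer one. Since $R_{\hat g}>0$ strictly on the collar, the inequality in Theorem~\ref{main} is strict and gives $E_{ADM}>0$, the desired contradiction. Thus the PSC collar substitutes for the isometric embedding and scalar-flat Shi--Tam exterior that your approach would need.
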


\begin{rem} 
While assuming that the initial data sets are spin, Shibuya (\cite{Shibuya}) proved a spacetime positive mass theorem with data of lower regularity than that imposed in Theorem \ref{main} by the idea of distributional curvature proposed in \cite{LL}.  In particular, \cite{Shibuya} can imply positive mass theorem on 3 dimensional initial data sets with corners without restriction on second homology. Furthermore, some of the aforementioned results can be extended to higher dimensions if we assume the initial data sets are spin. These will be addressed in the body of the paper.  
\end{rem}

\begin{rem}
All manifolds in this paper are assumed to be orientable. 
\end{rem}

This text is organised as follows. In Section \ref{Preliminaries}, asymptotically flat initial data sets and the assumptions on corners are discussed. 
In Section \ref{Spacetime harmonic coordinates}, the existence and regularity of spacetime harmonic functions is derived. In Section \ref{Regular level set topology}, we discuss the topology of regular level sets of spacetime harmonic coordinates. In Section \ref{Boundary formula}, the integral formula for spacetime harmonic functions with general boundary conditions is derived. In Section \ref{SPMTC}, we will prove Theorem \ref{main} by applying the integral formula. In Section \ref{Embedding into Minkowski space} and \ref{Raising Regularity}, the proof of Corollary \ref{rigid} and \ref{regids} are presented respectively. In Section \ref{Connections to quasilocal mass}, we introduce the quantity $\MW(\S)$. In Section \ref{Applications of W}, we
prove the comparison theorem for $\MW(\S)$ and show its applications in detecting minimal surfaces. The non-existence of DEC fill-ins is discussed in Section \ref{DECfillin}.  

\

\textbf{Acknowledgements }: The author would like to thank Daniel Stern for kindly answering his questions and clearly explaining \cite{BKKS}. The author would like to thank Demetre Kazaras for explaining their work in \cite{HKK} in detail. The author also thanks Pak-Yeung Chan, Sven Hirsch, Man-Chun Lee, Chao Li and Long-Sin Li for helpful conversations. The author would like to thank Prof. Pengzi Miao for kindly answering his questions. The author is grateful to Prof. Martin Man-Chun Li, Prof. Connor Mooney, Prof. Richard Schoen, Prof. Luen-Fai Tam and Prof.  Mu-Tao Wang for insightful discussions.

\section{Preliminaries}\label{Preliminaries}
\subsection{Asymptotically flat initial data sets}
Given an initial data set \\ $(M^3,g,k)$, where $g$ is a Riemannian metric and $k$ is a symmetric $(0,2)$-tensor. Define the conjugate momentum tensor by $\pi=k-(tr_g k)g$. Under constraint equations, we can define the mass density $\mu$ and the current density $J$ by
$$\mu=\frac{1}{2}(R_g+(tr_g k)^2-|k|_g^2)$$
and $$J=div_g(k-(tr_gk)g)=div_g\pi.$$
$(M,g,k)$ is said to satisfy the dominant energy condition if 
$$\mu\geq|J|_g.$$ 
We say $(M, g, k)$ is asymptotically flat if there exists a compact set $\mathcal{C}\subset M$ such that  $M\setminus \mathcal{C}=\coprod_{i=1}^k N_i$, where each end $N_i=\R^3\setminus B_{r_i}$ through a coordinate diffeomorphism in which 
$$
g_{ij} = \delta_{ij} + O^2(|x|^{-q}),
$$
and 
$$
k_{ij}=O^1(|x|^{-q-1}),
$$
where $q>\frac{1}{2}$, $ \mu, J \in L^1(M)$ and for a function $f$ on $M$, $f = O^{m}(|x|^{-p})$ means $\sum_{|l|=0}^m||x|^{p+|l|}\partial^lf|$ is bounded near the infinity.

\

For each end, the ADM energy-momentum vector $(E,P)$ and the ADM mass $\mathfrak{m}$ \cite{ADM} are given by 
$$
E = \frac{1}{16\pi} \lim_{r \to \infty} \int_{|x| = r}  ( g_{ij,i} - g _{ii, j} ) \nu^j,
$$ 
$$P_i:=\frac{1}{8\pi} \lim_{r \to \infty} \int_{|x| = r}  \pi_{ij} \nu^j, \,\ \,\ \,\ i=1,2,3,$$ 
and 
$$\mathfrak{m}=\sqrt{E^2-|P|^2},$$
where the outward unit normal $\nu$ and surface integral are with respect to the Euclidean metric. 

\

The set up of Theorem \ref{main} is as follows. $\tilde \Sigma\subset M$ we consider is a piecewise smooth surface with (possibly empty) piecewise smooth boundary.  Since we can fill in $\p \ti \S$ by a surface in $M$, hereafter, it is assumed that $\ti \S$ is some open sets' boundary consisting of piecewise smooth surfaces whose boundaries are piecewise smooth curves and vertices, where the dihedral angles between faces are bounded from below by a positive constant.  For example, $\ti \S$ can be the boundary of balls, cylinders, polyhedra and cones in $\R^3$.  In this setting, motivated by Hamiltonian formulation (see Section \ref{Hamiltonian formulation}), Theorem \ref{main} provides partial results on dihedral rigidity for initial data sets in \cite{T}.   

\

Denote the designated end by $\mathcal{E}$.  Let $\Sigma_i$ be a connected component of $\Sigma$.  Let $\nu$ denote the normal on faces of $\Sigma_i$ pointing toward $\mathcal{E}$.  A neigbourhood of $\S_i$ in $M$ on the same side to which $\nu$ is pointing is denoted by $U_+$ while the one on the opposite by $U_-$. The metrics on $U_{\pm}$ induced by $g$ are denoted by $g_{\pm}$ and their mean curvatures on $\Sigma_i$ with respect to $\nu$ are denoted by $H_{\pm}$. Similarly, we can define $k_{\pm}$ and $\pi_{\pm}$ on $\S$. 

\

The regularity assumptions of $(g,k)$ in Theorem \ref{main} naturally arise from the fill-in and extension problems (e.g. \cite{B}, \cite{M1}, \cite {ST}, \cite{SWWZ}, \cite{SWW}).  For example, let $(M_1, g_1)$, $(M_2, g_2)$ be two Riemannian manifolds with smooth boundary, where $\p M_1$ is isometric to $\p M_2$. As mentioned in Section 3 of \cite{M1}, one can respectively identify the Gauss tubular neighbourhoods of $\p M_1$ in $M_1$ and $\p M_2$ in $M_2$ with $U_1=\p M_1 \times(-2\eps,0]$ and $U_2=\p M_2 \times[0,2\eps)$ for some $\eps>0$ by Fermi coordinates $(x,t)$.  Then, $g_1\cup g_2$ would be a continuous metric on the glued manifold $M_1\cup M_2$ under this chart. The smooth structure might be altered but the topology remains the same.

\

For a smooth closed hypersurface $S\subset M$, we say $S$ is a weakly outer trapped surface if on $S$, the outer null expansion 
$$\theta_+=H+tr_S k \leq 0,$$
and a marginally outer trapped surface ($MOTS$) if 
$$\theta_+=0;$$
correspondingly, 
a weakly inner trapped surface if the inner null expansion 
$$\theta_-=H-tr_S k \leq 0,$$
and a marginally inner trapped surface ($MITS$) if 
$$\theta_-=0,$$ 
where $H$ is computed with respect to the normal pointing to the infinity of the designated end $\mathcal{E}$. A surface is weakly trapped if it is either weakly outer trapped or weakly inner trapped. 
In this note, the mean curvature is computed in the convention that $\mathbb{S}^2\subset \R^3$ has positive mean curvature with respect to the outward normal.  
\

If $M$ contains more than one ends, by the decay rate of $g$ and $k$, we know large coordinate spheres in all the ends other than $\mathcal{E}$ satisfy $\theta^+<0$. Therefore, we can assume that $\Me$ has one end $\mathcal{E}$ only. 

\

Since $H_2(\Me,\MS,\mathbb{Z})=0$, we can compartment $M_{ext}$ into different components as follows, $$\Me=M_{0}\cup_{i=1}^l K_i \cup_{j=1}^m \Omega_{j},$$ for some $l, m \geq 0$, where 
\begin{enumerate}
\item
$M_0$ is the component containing $\mathcal{E}$ with the boundary composed of components of $\MS$ and components of $\Sigma$, 
\item 
$K_i$ is compact with the boundary composed only of components of $\Sigma$, 
\item 
$\Omega_{j}$ is compact with the boundary composed of components of $\MS$ and a component of $\Sigma$.
\item 
$(g, k)$ is smooth on each of the components. 
\end{enumerate}

\subsection{Hamiltonian formulation (Hamilton-Jacobi analysis)}\label{Hamiltonian formulation}
Let $(\Omega^n,g,k)$ be a compact initial data set with boundary $\S$.  A spacetime $(N^{n+1},\bar g)$ with boundary $\bar \S$ can be constructed by infinitesimally deforming the initial data set $(\Omega,g,k,\S)$ in a transversal, timelike direction $\p_t= V \vec{n} + W^i\p_i$ which satisfies $\bar\Na_{\p_t} t=1$, where $V$ is the lapse function, $\vec{n}$ is the timelike unit normal of $\Omega$ in $N$ and $W$
is the shift vector. Further assume that $\Omega$ meets
$\bar \S$ orthogonally. The purely gravitational contribution $\mathcal{H}_{grav}$ to the total
Hamiltonian at the slice $\Omega$ is given by (\cite{ADM},\cite{RT},\cite{BY},\cite{HH})
\be \label{Ham}
c(n) \mathcal{H}_{grav}(V,W) = \int_{\Omega} (\mu V  + \la J,W \ra) - \int_{\S}(HV-\pi(\nu,W)),
\ee
where $H$ is the mean curvature of $\S$ with respect to the outward normal of $\Omega$ and $\pi$ is the conjugate momentum tensor.  From this, if we consider an asymptotically flat initial data set with corners, we can expect that the contribution to mass from the corners is due to the mean curvature $H$ and the 1-form $\pi(\nu,\cdot)$.  An interesting difference between \eqref{Ham} and \eqref{ADMLB} is that they are related to a timelike vector field and a null vector field (see Section \ref{Spacetime harmonic coordinates}) respectively.  From this perspective,  expressing the energy by null vector fields on initial data sets is a significant property of the level set approach. 

\subsection{Hyperbolic space patched with negative mass Schwarzschild}
We are going to construct an initial data set which does not satisfy the conditions on corners stated in Theorem \ref{main}.  
Let us consider a rotationally symmetric data set of the form $g=u(r) dr^2+r^2 g_{\mathbb{S}^2}$, where $g_{\mathbb{S}^2}$ is the standard metric on $\mathbb{S}^2$. Let $u=\frac{1}{1+r^2}$ for $0\leq r\leq 1$ and $u=\frac{1}{1-\frac{2m}{r}}$ for $r\geq 1$, take $m=-\frac{1}{2}$ so that $u$ is continuous at $r=1$. 

\

Note that the metric $g_-$ for $r<1$ is the hyperbolic metric while $g_+$ for $r>1$ is the negative mass Schwarzschild metric. This metric $g=(g_-,g_+)$ is then Lipschitz across $\Sigma=\{r=1\}$ with $H_-=H_+$ on $\S$. 
If we take either $k_-=g_-$ or $-g_-$ for $r<1$ and $k_+=0$ for $r>1$, then away from
$\Sigma$ we see that $(g,k)$ satisfies the vacuum constraint equations, $\mu=|J|=0$. Moreover, $H_--H_+-|\omega_--\omega_+|<0$ on $\S$. 

\

For this initial data set,  $E=m=\frac{-1}{2}$ and $|P|=0$. By the definition of ADM energy-momentum vector, we can see that under different choices of $k$, $E-|P|$ is still of the same sign. This tells us the jump of expansions $\theta_{\pm}=H\pm tr_\S k$ would not be a sufficient condition for the spacetime positive mass theorem with corners in general. The example also shows that the negativity of $E-|P|$ can be expected from the conditions on the corner.

\section{Spacetime harmonic coordinates on initial data sets with corners}\label{Spacetime harmonic coordinates}
In this section, we are going to show existence and regularity of spacetime harmonic functions on a non-smooth initial data set. As mentioned by \cite{HKK}, if we consider $(M,g,k)$ as a spacelike slice of a spacetime $(\o M,\o g)$, then for a smooth function $\tilde u$ on $\o M$, for $X,Y\in TM$, the spacetime Hessian $\o \Na\o \Na \tilde u(X,Y)=\Na \Na \tilde u (X,Y) +k(X,Y)\vec{n}(\tilde u)$, where $\vec{n}$ is the timelike unit normal of $M$ in $\o M$. And if $\o \Na \tilde u$ is null, we have $\o \Na \o \Na \tilde u (X,Y)=\Na \Na \tilde u (X,Y) +|\Na \tilde u|k(X,Y)$.
\begin{defn}
A function $u$ on $M$ is called spacetime harmonic if $$\o \Lp u:=tr_g \o\Na \o\Na u=\Lp u+(tr_g k)|\Na u|=0.$$
\end{defn}
Moreover, as motivated by \cite{Bartnik} and \cite{BKKS}, we would further like such $u$ to be asymptotic to the original asymptotically flat coordinates.  

\subsection{Existence and regularity}\label{existence}
Form this section onward, for notation simplicity, write $tr_g k$ by  $K$. Let $\MS$ be a finite (possibly empty) disjoint union of connected weakly trapped surfaces which do not intersect $\ti \S$ such that $H_2(M_{ext},\MS,\mathbb{Z})=0$. Let $\Sigma=\tilde \Sigma \cap \Me$. Following the strategy of Section 4 in \cite{HKK}, we prove the following proposition. 
\begin{prop}
For the asymptotically flat coordinate $x^1$, for any $\phi\in C^{\infty}(\MS)$, there exists $u\in  W^{2,p}_{loc}(\Me)\cap W^{3,p}_{loc}(\Me\setminus \S)$ such that 
\begin{enumerate}
    \item $\Lp u+K|\Na u|=0$ on $M_{ext}$, 
    \item $u=\phi$ on $\MS$, 
    \item $u-x^1=O^2(|x|^{1-q})$ as $|x| \to \infty$, 
    \item $u|_{\S}$ is $C^2$ on faces of $\S$. 
\end{enumerate} 
\end{prop}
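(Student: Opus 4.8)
The plan is to solve the spacetime harmonic equation $\Lp u + K|\Na u| = 0$ on $\Me$ with the prescribed Dirichlet data on $\MS$ and the prescribed asymptotic behavior $u - x^1 = O^2(|x|^{1-q})$, following the scheme of Section 4 of \cite{HKK} but working on each smooth piece of the decomposition $\Me = M_0 \cup_i K_i \cup_j \Omega_j$ and then gluing. First I would set up the problem on a large compact exhaustion: fix a coordinate ball $B_\rho$ in the end $\mathcal{E}$ and on the region $\Me \cap B_\rho$ solve the Dirichlet problem with $u = \phi$ on $\MS$ and $u = x^1$ on the coordinate sphere $\{|x| = \rho\}$. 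Because the equation is quasilinear with the gradient term $K|\Na u|$ (which is only Lipschitz in $\Na u$ at $\Na u = 0$, but globally Lipschitz and of subquadratic growth), I would follow \cite{HKK} in using the a priori gradient estimate coming from the fact that $|\Na u|$ satisfies a differential inequality, together with Leray--Schauder fixed point theory (or a continuity method in a parameter multiplying $K$) to produce a solution $u_\rho \in W^{2,p}$ on the compact region. The global $C^0$ bound is obtained by comparison with barriers built from the asymptotically flat coordinates, exactly as in \cite{BKKS} and \cite{HKK}.

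Next I would establish uniform (in $\rho$) interior $W^{2,p}$ and weighted estimates so that one can extract a limit as $\rho \to \infty$. Here the new feature relative to \cite{HKK} is the corner $\S$: since $g$ is only Lipschitz across $\S$, one cannot expect $u \in W^{3,p}$ across $\S$, only $W^{2,p}_{loc}(\Me)$ globally. The point is that $\Lp u + K|\Na u| = 0$ is uniformly elliptic with bounded measurable coefficients across $\S$, so standard $W^{2,p}$ theory for equations with merely bounded (VMO suffices, and Lipschitz is better) coefficients gives $u \in W^{2,p}_{loc}$ up to and across $\S$; on each smooth piece the coefficients are smooth and elliptic regularity bootstraps to $u \in W^{3,p}_{loc}(\Me \setminus \S)$. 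For conclusion (4), the restriction $u|_\S$ on each smooth face is controlled by trace theorems: $u \in W^{3,p}$ on the piece adjacent to a face, with $p > 3$, gives $u|_{\text{face}} \in W^{2,p} \hookrightarrow C^{1,\alpha}$, and a further tangential bootstrap on the face (differentiating the equation tangentially, using that $g$ restricted to the face is smooth) upgrades this to $C^2$ on the interior of each face. The matching of the one-sided traces from $U_+$ and $U_-$ follows from the $W^{2,p}(\Me)$ membership, hence continuity of $u$ across $\S$.

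The asymptotic statement (3) is handled by the same weighted Schauder/elliptic argument used in \cite{HKK} and \cite{Bartnik}: writing $w = u - x^1$, the equation for $w$ is $\Lp w = -(\Lp x^1 + K|\Na u|)$, and the right-hand side decays like $O(|x|^{-1-q})$ using the asymptotic flatness $g_{ij} = \delta_{ij} + O^2(|x|^{-q})$, $k_{ij} = O^1(|x|^{-q-1})$; then weighted elliptic estimates on $\R^3 \setminus B_{r_0}$ with weight exponent $1-q$ (noting $1 - q \in (0, 1/2)$ is admissible, avoiding the exceptional indices) give $w = O^2(|x|^{1-q})$. One must check this is compatible with the boundary condition at $\{|x|=\rho\}$ uniformly; since we imposed $u = x^1$ there, $w$ vanishes on that sphere and the weighted estimate passes to the limit.

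The main obstacle I expect is twofold. First, the a priori gradient estimate for the quasilinear equation must be made to work in the presence of the corner: the usual Bochner-type argument for $|\Na u|$ uses two derivatives of $u$ and curvature of $g$, which are not available across $\S$; the resolution is to derive the $C^0$ and $C^1$ bounds purely from the divergence-structure/maximum-principle arguments of \cite{HKK} that only need $g$ Lipschitz and the weak form of the equation, or alternatively to first get the solution on each smooth component with matched Dirichlet data and then observe that the global $W^{2,p}$ solution exists by the elliptic theory for discontinuous coefficients. Second, one must be careful that the decomposition into $M_0, K_i, \Omega_j$ and the weakly trapped boundary $\MS$ interacts correctly with solvability — in particular that the homology hypothesis $H_2(\Me, \MS, \mathbb{Z}) = 0$ is not actually needed for \emph{this} proposition (it is used later for level-set topology), so that existence here is unobstructed; I would state this explicitly. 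Everything else is a routine, if lengthy, adaptation of the corresponding arguments in \cite{HKK}, \cite{BKKS}, and the corner analysis of \cite{HMT}.
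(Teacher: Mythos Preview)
Your overall strategy---solve on a compact exhaustion via Leray--Schauder, obtain uniform $W^{2,p}$ bounds by maximum principle (the zeroth-order coefficient vanishes), pass to the limit using a barrier---is exactly the paper's approach, following \cite{HKK}. One small difference: the paper first solves the \emph{linear} problem $\Delta v = -K$, $v = 0$ on $\MS$, $v - x^1 = O^2(|x|^{1-q})$ (via \cite{Bartnik}), sets $\tilde v = v + v_0$ with $v_0$ smooth and $v_0|_{\MS} = \phi$, and imposes $u^r = \tilde v$ (not $x^1$) on the outer sphere $S_r$. This makes $w^r = u^r - \tilde v$ satisfy zero Dirichlet data on the full boundary $\partial M_r$, so the Leray--Schauder map lands in $C_0^{1,\alpha}(M_r)$ and the barrier from \cite{HKK} gives $w^r = O(|x|^{1-2q})$ directly; the weighted elliptic machinery you invoke is then unnecessary.

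There is a genuine gap in your argument for item (4). You claim $u \in W^{3,p}$ on the piece adjacent to a face and then take a trace; but $W^{3,p}$ \emph{up to the face} is exactly what is in question---this is a transmission problem across $\S$, not a Dirichlet problem with known smooth boundary data, so one-sided boundary regularity is circular. The paper instead works in a neighborhood $V$ of a face point \emph{crossing} $\S$, takes a tangential difference quotient $\phi^h = \Delta^h u$ of the full three-dimensional equation, and checks that the resulting right-hand side
\[
f^h = -\Delta^h g * \widetilde{\partial^2 u} + \Delta^h(g*\Gamma)*\widetilde{\partial u} - (\Delta^h K)\,\widetilde{|\nabla u|} - K\,\Delta^h|\nabla u|
\]
is uniformly bounded in $L^p(V)$: $g$ is Lipschitz, $u \in W^{2,p}(V)$, $|\nabla u| \in W^{1,p}(V)$ by Kato, and the tangential difference quotients of $g,k$ are bounded since $g,k$ are smooth on each side and the differencing direction does not cross $\S$. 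Interior $W^{2,p}$ estimates then give $\phi^h$ uniformly bounded in $W^{2,p}(U) \hookrightarrow C^{1,\alpha}(U)$ for $U \subset\subset V$, hence $\partial_{\text{tan}} u \in C^{1,\alpha}$ across $\S$, which yields $u|_\S \in C^2$ on faces. Your ``tangential bootstrap'' is the right instinct, but it must be carried out in the bulk across $\S$, not on the face after a trace.
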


\begin{rem} 
In Section \ref{Regular level set topology}, we would discuss that $\phi$ can be chosen to achieve suitable signs on the normal derivative of $u$ on $\MS$. 
\end{rem}

\begin{proof}
By slightly generalising Proposition 2.2 and Theorem 3.1 in \cite{Bartnik}, we can have a function $v\in W^{2,p}_{loc}(M)$, where $p>3$, such that 
\begin{enumerate}
    \item $\Lp v=-K$ on $\Me$, 
    \item $v=0$ on $\MS$, 
    \item $v=x^1+O^2(|x|^{1-q})$ as $|x| \to \infty$.
\end{enumerate}
Note that by elliptic regularity, $v$ is smooth on $\Me\setminus \S$ and $C^{1,\alpha}$ across $\S$ by Sobolev embedding. We also define a compactly supported smooth function $v_0$ such that $v_0=\phi$ on  $\MS$. Define $\tilde{v}=v+v_0$.

\

 Let $r>>1$ and $M_r$ denote the region of $\Me$ enclosed by the coordinate sphere $S_r=\{|       x|=r\}$. Consider the following localised Dirichlet problem, 
\begin{enumerate}
\item $\Lp u^r +K|\Na u^r|=0$ in $M_r$,
\item $u^r=\phi$ on $\MS$, 
\item $u^r=\tilde{v}$ on $S_r$.
\end{enumerate}  
 
Let $w^r=u^r-\tilde{v}$. It is then equivalent to seek existence of $w^r$ which solves,
 \begin{enumerate}
\item $\Lp w^r=-K\left( \frac{\Na(w^r+2\tilde{v})}{|\Na(w^r+\tilde{v})|+|\Na \tilde{v}|} \right)\cdot \Na w^r-\Lp \tilde{v}-K|\Na \tilde{v}|$ in $M_r$, 
\item $w^r=0$ on $\MS$, 
\item $w^r=0$ on $S_r$.
\end{enumerate}  
Construct a map $\mathcal{F}:C_0^{1,\alpha}(M_r)\times [0,1]\to C_0^{1,\alpha}(M_r)$ by 
\be
\mathcal{F}(w, \sigma)=\sigma \Lp^{-1} F(w), 
\ee
where $F:C_0^{1,\alpha}(M_r)\to L^p(M_r)$ is defined by 
$$F(w)=-K\left( \frac{\Na(w+2\tilde{v})}{|\Na(w+\tilde{v})|+|\Na \tilde{v}|} \right)\cdot \Na w-\Lp \tilde{v}-K|\Na \tilde{v}|.$$

In particular, we can see $w^r=\mathcal{F}(w^r,1)$. Consider the following composition (cf. equation (4.10) in \cite{HKK}), 
 \be
C_0^{1,\alpha}(M_r)\xrightarrow{F} L^p(M_r)\xrightarrow{\Lp^{-1}} W^{2,p}(M_r)\cap W^{1,p}_0(M_r)\xrightarrow{\iota} C_0^{1,\alpha}(M_r). 
 \ee 
Note that $F$ and $\Lp^{-1}$ are bounded and the inclusion is compact by Sobolev embedding. Let $w_\sigma=\mathcal{F}(w_\sigma, \sigma)$, we have
\be
\Lp w_{\sigma}+\sigma K\left( \frac{\Na(w_{\sigma}+2\tilde{v})}{|\Na(w_{\sigma}+\tilde{v})|+|\Na \tilde{v}|} \right)\cdot \Na w_{\sigma}=-\sigma \Lp \tilde{v}-\sigma K|\Na \tilde{v}|. 
\ee
Since the zeroth order term coefficient vanishes, maximum principle (Theorem 9.1 in \cite{GT}) is applicable, we can have a uniform $W^{2,p}(M_r)$ apriori estimate for all $w_\sigma$ by Theorem 9.11 and 9.13 in \cite{GT}. Thus, $w_{\sigma}$ is uniformly bounded in $C^{1,\alpha}(M_r)$ by Sobolev embedding.  

\

By Leray Schauder fixed point theorem (\cite{GT} Theorem 11.6), we can seek existence of $w^r$. And by the barrier function of order $O(|x|^{1-2q})$ constructed in Section 4.2 in \cite{HKK} and maximum principle, we can obtain a uniform $W^{2,p}_{loc}$ bound for all $w^r$. Hence, $w_r$ is uniformly $C^{1,\alpha}_{loc}$ bounded. Away from $\Sigma$, we can see that $\Lp w^r\in C^{0,\alpha}$, and hence $w^r\in C^{2,\alpha}_{loc}(\Me\setminus \S)$, uniformly bounded .

\

Hence, by taking a diagonal subsequence as $r\to \infty$, we have a spacetime harmonic function $u:=\lim_{r\to\infty}w^r+\tilde{v}=\tilde{v}+O^2(|x|^{1-2q})=x^1+O^2(|x|^{1-q})$, $u\in C^{1,\alpha}_{loc}(\Me)\cap C^{2,\alpha}_{loc}(\Me\setminus \S)$. Furthermore, since $|\Na u|\in W^{1,p}_{loc}(\Me)$ by Kato's inequality, we have $u \in W^{3,p}_{loc}(\Me \setminus \S)$ by Theorem 9.19 in \cite{GT}.  

\

Then, we are going to consider the regularity of $u$ nearby $\S$. 
Let $\hat \S$ be a smooth surface component of $\S$. 
Let $p\in \hat \S$ and $V$ be a neighborhood of $p$ in $M$ which does not intersect $\S\setminus \hat \S$.  Apply Fermi coordinate along $\hat \S$, $(x^1, x^2, t) \in \S\times (-\eps,\eps)$, considering the difference quotients along $\p_1$ direction, let $\phi^h=\Lp^h u$, where $\Lp^h f(x_1,x_2,t):=\dfrac{f((x_1+h,x_2,t)-f((x_1,x_2,t))}{h}$ for a function $f$. Since $u$ is spacetime harmonic, we have  
\be
\begin{split}
&g^{ij}\phi^h_{ij}-g^{ij}\Gamma_{ij}^k\phi^h_k \\
=& f^h := -\Lp^hg*\widetilde{\p^2 u}+\Lp^h(g*\Gamma)*\widetilde{\p u}-(\Lp^h K)\widetilde{\,|\Na u|\, }-K \Lp^h |\Na u|,\\
\end{split}
\ee
where $*$ denotes multiplication with indices suppressed and $\tilde{\phi}(x_1,x_2,t)=\phi(x_1+h,x_2,t)$ for functions on $V$. Observe that the $g\in C^{0,1}(V)$, $\Gamma\in L^{\infty}(V)$ and $u\in W^{2,p}(V)$. While along $\p_1$ direction, except on a $\mathcal{H}^3$-measure zero set $\S \cap V$, $g$ and $k$ are also smooth. Moreover, $|\Na u|\in W^{1,p}(V)$. Hence, its difference quotient is uniformly bounded in $L^p(V)$. Therefore, $f^h$ is uniformly bounded in $L^{p}(V)$. By Theorem 9.11 in \cite{GT}, we know for any $U\subset\subset V$, $||\phi^h||_{W^{2,p}(U)}$ and hence $||\phi^h||_{C^{1,\alpha}(U)}$  is uniformly bounded. Therefore, we have $\phi:=\lim_{h\to0} \phi^h=\p_1 u \in C^{1,\alpha}(U)$ by \cite{GT} Lemma 7.24. 

\
  
By varying the direction which is tangential to $\hat \S$ and the neighbourhood for difference quotients, we can see the same argument applies. Therefore,  $u|_{\S}$ is $C^2$ on faces of $\S$. 
\end{proof}

\section{Regular level set topology}\label{Regular level set topology}
In this section, we would first discuss the regular level set as a whole in $\Me$. Then we would further study the intersection of the regular level set with the corner $\S$. This is essential for analysis in Section \ref{Boundary formula} and \ref{SPMTC} when we study the boundary terms of the integral formula  (Lemma \ref{IF}). We would denote a level set $\{u=t\}$ by $\S_t$. 

\subsection{Structure of regular level sets in $\Me$}
Denote each component of $\MS$ by $\p_i M$, $i=1,2,...n$.
Let $u_{\vec{c}}$, where $\vec{c}=(c^1, c^2,..., c^n)$ is a constant vector, be a spacetime harmonic function such that 
\begin{enumerate}
\item $\Lp u_{\vec{c}}+K |\Na u_{\vec{c}}|=0$ in $M_{ext}$, 
\item $u_{\vec{c}}=c^i$ on $\p_i M$, 
\item $u_{\vec{c}}=v+O^2(|x|^{1-2q})$ as $x\to\infty$.
\end{enumerate}
Its existence and regularity have been analysed in Section \ref{Preliminaries}.
We are going to show the following 2 conclusions from \cite{HKK} are still valid for the solution we have obtained which is of slightly lower regularity. 
\begin{lma}\label{ND}(\cite{HKK} Lemma 5.1)
Let $a_i \in\{-1,1\}$ for $i=1,2,...,n$. There exists a constant $\vec{c}$ such that for each $i$, there exists $y_i \in \p_i M$ with $|\Na u_{\vec{c}}(y_i)|=0$, and $(-1)^{a_i}(\p_\nu u_{\vec{c}})\geq 0$ on $\p_i M$, where $\nu$ is the unit normal pointing out of $\Me$.
\end{lma}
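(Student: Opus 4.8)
The plan is to reduce this to a continuity/degree argument in the parameter $\vec c \in \mathbb{R}^n$, exactly as in \cite{HKK} Lemma 5.1, and check that the lower regularity of $u_{\vec c}$ (only $W^{2,p}_{loc}(\Me) \cap W^{3,p}_{loc}(\Me \setminus \S)$ rather than smooth) does not obstruct any of the steps. First I would establish continuous dependence of $u_{\vec c}$ on $\vec c$: the solutions constructed in the Proposition are obtained via Leray--Schauder with uniform $W^{2,p}_{loc}$ estimates, and the same a priori bounds give, for $\vec c$ in a compact set, a uniform $W^{2,p}_{loc}(\Me)$ bound; since the equation depends linearly (through the boundary data) on $\vec c$ and the nonlinearity is the same, a standard compactness-plus-uniqueness argument shows $\vec c \mapsto u_{\vec c}$ is continuous into $C^1_{loc}(\Me)$, hence $\vec c \mapsto \p_\nu u_{\vec c}|_{\p_i M}$ is continuous into $C^0(\p_i M)$. (If uniqueness of the Dirichlet problem is not available one passes to the graph / uses that every limit is again a solution with the given data, which suffices for the argument below.)

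Next, fix the signs $a_i \in \{-1,1\}$. The key structural input is a maximum-principle observation: since $u_{\vec c}$ satisfies $\Lp u_{\vec c} + K|\Na u_{\vec c}| = 0$, an equation with no zeroth-order term, the strong maximum principle and Hopf lemma apply on $M_0$ and on each $\Omega_j, K_i$ (on each of which $(g,k)$ is smooth, and across $\S$ one has the $C^1$ matching from the Proposition together with the jump structure). Consequently, if $c^i$ is chosen larger (resp. smaller) than $u_{\vec c}$ on a neighboring region, then $u_{\vec c}$ cannot attain an interior maximum (resp. minimum) on $\p_i M$ and the Hopf lemma forces $(-1)^{a_i}\p_\nu u_{\vec c} > 0$ on all of $\p_i M$; the borderline case where $u_{\vec c} \equiv c^i$ on an adjacent component is where $|\Na u_{\vec c}|$ vanishes somewhere on $\p_i M$. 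So I would consider, for each $i$, the function
\be
\Phi_i(\vec c) := \min_{\p_i M} \big( (-1)^{a_i}\, u_{\vec c} \big) - \big( (-1)^{a_i} c^i \big),
\ee
and show: (i) each $\Phi_i$ is continuous in $\vec c$; (ii) sending $(-1)^{a_i}c^i \to +\infty$ with the other $c^j$ fixed makes $\Phi_i > 0$ (comparison with a suitable super/subsolution), while sending it to $-\infty$ makes $\Phi_i < 0$; (iii) whenever $\Phi_i(\vec c) \ge 0$ one has $(-1)^{a_i}\p_\nu u_{\vec c} \ge 0$ on $\p_i M$ by the Hopf argument above, and $\Phi_i(\vec c)=0$ yields a point $y_i \in \p_i M$ where the boundary minimum is attained and, by the boundary point lemma in the equality case, $|\Na u_{\vec c}(y_i)| = 0$.

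The final step is to produce a single $\vec c$ with $\Phi_i(\vec c) = 0$ for all $i$ simultaneously. Here I would either iterate monotonically (adjusting $c^i$ one coordinate at a time and using a monotonicity/ordering property of $\Phi_i$ in $c^i$ coming again from the comparison principle, to set up a fixed-point iteration that converges), or package it as a topological-degree statement for the map $\vec c \mapsto (\Phi_1, \dots, \Phi_n)$ on a large box, using (ii) to compute the boundary behavior; this is precisely the argument of \cite{HKK} Lemma 5.1 and I expect it to go through verbatim once the continuity in Step 1 is in hand. The main obstacle is Step 1 together with the equality-case Hopf lemma across the corner $\S$: one must verify that the $C^1$ regularity of $u_{\vec c}$ up to and across the (piecewise smooth) surface $\S$ — plus the one-sided $C^2$ regularity on faces and the dihedral-angle lower bound near edges and vertices — is enough to run the strong maximum principle and boundary point lemma on the compartments $M_0, K_i, \Omega_j$, and to conclude $|\Na u_{\vec c}(y_i)| = 0$ rather than merely $\p_\nu u_{\vec c}(y_i) = 0$. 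I expect this to follow from the standard interior and boundary Harnack/Hopf estimates applied separately on each smooth region, but it is the point that genuinely uses the hypotheses of Theorem \ref{main} rather than being a formal transcription of \cite{HKK}.
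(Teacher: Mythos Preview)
Your proposal has a concrete gap and a misplaced worry.

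The gap: your function $\Phi_i(\vec c) := \min_{\p_i M}\big((-1)^{a_i} u_{\vec c}\big) - (-1)^{a_i} c^i$ is identically zero, because by construction $u_{\vec c}$ satisfies the Dirichlet condition $u_{\vec c}\equiv c^i$ on $\p_i M$. So neither the degree argument nor the monotone iteration on $(\Phi_1,\dots,\Phi_n)$ can get off the ground as written. What you actually need to track is the sign of $\p_\nu u_{\vec c}$ on each $\p_i M$, and for this the paper (following \cite{HKK}) does something sharper than the $C^0$ dependence you sketch: it shows that $\vec c \mapsto u_{\vec c}-v$ is a $C^1$ map into $C^{1,\alpha}(\Me)$. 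The point is that the derivative $u'_{c^i}:=\p_{c^i} u_{\vec c}$ solves a linear equation of the same structure with $u'_{c^i}=\delta_{ij}$ on $\p_j M$ and $u'_{c^i}\to 0$ at infinity; the maximum principle and Hopf lemma then give strict monotonicity of $\p_\nu u_{\vec c}$ in each $c^i$, which is what drives the intermediate-value/degree step in \cite{HKK}. Mere continuity of $\vec c\mapsto u_{\vec c}$ is not how the paper proceeds, and your substitute functional is vacuous.

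The misplaced worry: you spend effort on whether the Hopf lemma survives across the corner $\S$. It does not need to. By hypothesis the weakly trapped surfaces $\MS$ do not meet $\tilde\S$, so $(g,k)$ is smooth in a neighbourhood of each $\p_i M$ and $u_{\vec c}$ is $C^2$ there by interior elliptic regularity. Hence the standard Hopf lemma applies directly on $\p_i M$; no corner analysis is required for this step. Also, since $u_{\vec c}$ is constant on $\p_i M$, the tangential gradient vanishes, so $\p_\nu u_{\vec c}(y_i)=0$ automatically gives $|\Na u_{\vec c}(y_i)|=0$; this is not a delicate equality-case issue.
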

\begin{thm}\label{top}(\cite{HKK} Theorem 5.2)
Let  $\vec{c}$ be the constant obtained from Lemma \ref{ND}, then all regular level sets of $u_{\vec{c}}$ are connected and non-compact with a single end modeled on $\R^2\setminus B_{1}$. Hence, a regular level set would have Euler characteristic $\leq 1$.  
\end{thm}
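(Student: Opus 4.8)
The plan is to follow the level-set strategy of \cite{HKK} (their Theorem 5.2), checking that the argument goes through for the slightly-less-regular spacetime harmonic function $u_{\vec c}$ constructed in the previous section. The starting point is the observation that, by Sard's theorem applied to $u_{\vec c} \in C^{2,\alpha}_{loc}(\Me\setminus\S)$ together with the $C^2$ regularity of $u_{\vec c}$ restricted to the faces of $\S$, almost every value $t$ is a regular value, and for such $t$ the level set $\St = \{u_{\vec c} = t\}$ is a properly embedded surface in $\Me$ that meets each face of $\S$ transversally; near $\S$ one must check that $\St$ is a $C^1$ surface-with-corners, which follows from the $C^{1,\alpha}$ regularity of $u_{\vec c}$ up to $\S$ and the $C^2$ regularity of $u_{\vec c}|_\S$. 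The asymptotic condition $u_{\vec c} = v + O^2(|x|^{1-2q}) = x^1 + O^2(|x|^{1-q})$ forces, for $t$ in a bounded range, $\St$ to have exactly one end, which is a graph over $\{x^1 = t\}$ in the asymptotically flat end $\mathcal E$ and hence modeled on $\R^2\setminus B_1$; this is a direct decay estimate.

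Next I would establish connectedness. This is where the topological hypothesis enters: using $H_2(\Me,\MS,\mathbb Z)=0$ and the boundary-value normalization from Lemma \ref{ND} (which guarantees that on each component $\p_i M$ the normal derivative $\p_\nu u_{\vec c}$ has a sign and vanishes somewhere, so that $\p_i M$ lies on "one side" of nearby level sets and does not get chopped into pieces by $\St$), one argues that a regular level set $\St$ separates $\Me$ and that the compact piece it cuts off, capped along $\MS$, would represent a class in $H_2(\Me,\MS,\mathbb Z)$; were $\St$ disconnected, combining its components with portions of $\MS$ would produce a nontrivial relative $2$-cycle, contradicting the vanishing of $H_2$. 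Concretely, I would take the closure of the component of $\Me\setminus\St$ not containing the asymptotic end $\mathcal E$, observe its boundary consists of components of $\St$ together with components of $\MS$ across which $u_{\vec c}$ has a definite normal sign, and run the standard cut-and-paste homology argument of \cite{HKK} Lemma 5.1--Theorem 5.2 verbatim; the only thing to verify is that the presence of the corner $\S$ does not obstruct this, which it does not since $\St$ is still a genuine embedded (topological) surface and homology is a topological invariant insensitive to the Lipschitz/corner structure.

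Finally, once $\St$ is connected, properly embedded, noncompact with a single end modeled on $\R^2\setminus B_1$, I would deduce $\chi(\St)\le 1$: an orientable connected noncompact surface with one end is, topologically, a once-punctured closed surface of some genus $\gamma\ge 0$, so $\chi(\St) = 1 - 2\gamma \le 1$. (Orientability of $\St$ follows from orientability of $\Me$ together with $\St$ being a regular level set of a function, i.e.\ it has trivial normal bundle.)

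I expect the main obstacle to be the connectedness step near the corner $\S$: one must be careful that the relative cycle obtained by capping $\St$ along $\MS$ is well-defined and that the low regularity of $u_{\vec c}$ across $\S$ (only Lipschitz $g$, only $C^{1,\alpha}$ for $u_{\vec c}$, with $k$ possibly discontinuous) does not spoil transversality of $\St$ with the faces of $\S$ for generic $t$ — this is handled by a Sard-type argument for $u_{\vec c}|_{\hat\S}$ on each smooth face $\hat\S$, using that $u_{\vec c}$ is $C^2$ there, and then intersecting the (finitely many) bad sets. The rest is a transcription of the \cite{HKK} argument with "surface" read as "embedded surface with corners," which is topologically harmless.
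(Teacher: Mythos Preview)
Your proposal is correct and follows the same route as the paper: both reduce Theorem~\ref{top} to the argument of \cite{HKK} Theorem~5.2, with the only new content being the verification that the lower regularity of $u_{\vec c}$ across $\Sigma$ does not obstruct that argument. The paper does not spell out the Sard/homology steps you outline but simply records the inputs needed to invoke \cite{HKK} (the $C^1$ dependence of $u_{\vec c}$ on $\vec c$, which feeds into Lemma~\ref{ND}, together with the maximum principle and the Hopf lemma near $\mathcal S$, where $u_{\vec c}$ is $C^2$) and then cites Section~5 of \cite{HKK} wholesale; your write-up is effectively an expansion of what is being cited, and your attention to transversality of $\Sigma_t$ with the faces of $\Sigma$ is handled in the paper separately in Section~\ref{ITOP}.
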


It suffices to show that $u_{\vec{c}}$ is continuously differentiable in $\vec{c}$, in the sense of Section 5 in \cite{HKK}, which is as follows. 
\begin{lma}
$\Psi:\R^n \to C^{1,\alpha}(\Me)$ is a $C^1$ map, where $\Psi(\vec{c}):=u_{\vec{c}}-v$ and $v$ is defined as in Section \ref{Spacetime harmonic coordinates}. 
\end{lma}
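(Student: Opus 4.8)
The plan is to show that the nonlinear solution operator $\vec c \mapsto u_{\vec c}$ depends continuously differentiably on the boundary data by an implicit function theorem argument, working in the same function spaces used for the existence proof. First I would set up the relevant map. As in the construction of $u_{\vec c}$, write $u_{\vec c} = \Psi(\vec c) + v$ where $\Psi(\vec c) = w_{\vec c} + v_0(\vec c)$, with $v_0(\vec c)$ a fixed smooth compactly supported function interpolating the boundary values $c^i$ on $\partial_i M$ (chosen linearly in $\vec c$, so $\vec c \mapsto v_0(\vec c)$ is manifestly $C^\infty$ and even linear into $C^{1,\alpha}(\Me)$). It then suffices to show $\vec c \mapsto w_{\vec c} \in C^{1,\alpha}_{loc}(\Me)$ is $C^1$. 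The function $w_{\vec c}$ is characterised as the solution of the semilinear elliptic equation
\be
\Phi(\vec c, w) := \Lp w + K\!\left( \frac{\Na(w + v_0(\vec c) + 2v)}{|\Na(w+v_0(\vec c)+v)|+|\Na(v_0(\vec c)+v)|} \right)\cdot \Na w + \Lp(v_0(\vec c)) + K|\Na(v_0(\vec c)+v)| = 0
\ee
with zero Dirichlet data on $\SS$ and the prescribed asymptotics. So I would define $\Phi$ on an appropriate product of Banach spaces — roughly $\R^n \times \big(W^{2,p}_{loc}\text{-type weighted space}\big) \to \big(L^p_{loc}\text{-type weighted space}\big)$, with the weights encoding the $O^2(|x|^{1-2q})$ decay so that the linearisation is Fredholm/invertible — and verify: (i) $\Phi$ is $C^1$ jointly in $(\vec c, w)$; (ii) for fixed $\vec c$, the partial derivative $D_w\Phi$ at the solution $w_{\vec c}$ is a linear isomorphism onto the target space. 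Granting these, the implicit function theorem gives $\vec c \mapsto w_{\vec c}$ of class $C^1$ into the $W^{2,p}$-type space, and composing with the compact Sobolev embedding into $C^{1,\alpha}(\Me)$ (locally) yields the claim.

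For step (i), the only nontrivial point is differentiability of the quotient nonlinearity. Note the cancellation already exploited in the existence proof: the expression $\frac{\Na(w+v_0(\vec c)+2v)}{|\Na(w+v_0(\vec c)+v)|+|\Na(v_0(\vec c)+v)|}$ is a bounded smooth function of the gradients wherever the denominator is nonzero, and crucially the whole quadratic term $K(\cdots)\cdot \Na w$ is the rewriting of $K|\Na(w + v_0(\vec c) + v)| - K|\Na(v_0(\vec c)+v)|$, i.e. a difference of terms of the form $K|\Na(\cdot)|$. Since $\psi \mapsto |\Na\psi|$ is not differentiable where $\Na\psi = 0$, I would instead differentiate the original form $u \mapsto \Lp u + K|\Na u|$: but $u_{\vec c}$ has critical points (indeed Lemma \ref{ND} produces them on the boundary), so this naive route fails. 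The standard fix — and the one used in \cite{HKK} — is that $\Phi(\vec c, \cdot)$ as written, with the symmetrised denominator, genuinely is $C^1$ in $w$ near the solution because the troublesome set where $\Na(w+v_0+v)=0$ has measure zero and the difference quotients of the full nonlinearity remain $L^p$-bounded (this is exactly the kind of estimate carried out in the regularity part of the existence proof, controlling $f^h$ in $L^p$). I would make this precise by showing the Gateaux derivative exists in the strong ($L^p_{loc}$-valued) sense and is continuous in $(\vec c, w)$, using dominated convergence and the uniform $W^{2,p}_{loc}$ bounds.

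For step (ii), the linearisation $D_w\Phi(\vec c, w_{\vec c})[\eta]$ has the form $\Lp \eta + b\cdot\Na\eta$ with $b \in L^\infty_{loc}$ (and suitable decay at infinity coming from the decay of $K$ and of $\Na u_{\vec c} - e_1$); there is no zeroth-order term, so the maximum principle applies and gives uniqueness, hence injectivity. Surjectivity onto the weighted $L^p$ target, together with the decay asymptotics, follows from the linear existence theory — precisely the generalisation of Proposition 2.2 and Theorem 3.1 of \cite{Bartnik} already invoked to produce $v$, now applied to the operator $\Lp + b\cdot\Na$ instead of $\Lp$. The main obstacle I anticipate is step (i): pinning down the correct pair of weighted Banach spaces in which the nonlinearity is genuinely $C^1$ despite the modulus terms, and in which the linearised operator is simultaneously an isomorphism. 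Once the spaces are fixed so that both the $L^p$-boundedness of the nonlinear difference quotients (à la the $f^h$ estimate above) and the Fredholm property of $\Lp + b\cdot\Na$ hold, the implicit function theorem does the rest, and differentiating the explicit formula for the Gateaux derivative shows the map $\vec c \mapsto \Psi(\vec c)$ lands in $C^1(\R^n, C^{1,\alpha}(\Me))$ as claimed.
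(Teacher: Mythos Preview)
Your implicit function theorem strategy is a natural instinct but runs into a real obstacle at step (i), and the paper's proof takes a different, more direct route precisely to avoid it. The issue is that to apply the IFT you need $\Phi$ to be Fr\'echet $C^1$ between Banach spaces, and the nonlinearity $u \mapsto K|\nabla u|$ is genuinely \emph{not} Fr\'echet differentiable at functions with critical points: knowing the critical set has measure zero and that the finite-difference quotients stay $L^p$-bounded gives you at best a candidate Gateaux derivative a.e., not the uniform $o(\|\eta\|)$ remainder estimate that Fr\'echet differentiability demands. Near a critical point of $u_{\vec c}$ the remainder $K|\nabla(u+\eta)|-K|\nabla u|-K\tfrac{\nabla u}{|\nabla u|}\cdot\nabla\eta$ is only $O(|\nabla\eta|)$, not $o(|\nabla\eta|)$, and there is no a priori smallness of the near-critical set available to absorb this. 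Your own proposal flags this as ``the main obstacle'' and does not close it.

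The paper instead never differentiates the nonlinear map. It works entirely with finite differences, exploiting the algebraic identity
\[
|\nabla u_t|-|\nabla u_s|=\frac{\nabla u_t+\nabla u_s}{|\nabla u_t|+|\nabla u_s|}\cdot\nabla(u_t-u_s),
\]
so that $w=u_t-u_s$ satisfies a \emph{linear} equation $\Delta w + b\cdot\nabla w=0$ with $|b|\le|K|$ bounded uniformly, regardless of critical points. Maximum principle then gives $\|w\|_{C^0}\le|t-s|$, Schauder gives $\|w\|_{C^{1,\alpha}}\le C|t-s|$, and subsequential limits of $(u_t-u_s)/(t-s)$ exist; uniqueness of the limit (again by maximum principle applied to the linear limit equation for $u'_c$) upgrades this to genuine differentiability. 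Continuity of $c\mapsto u'_c$ is handled by the same trick: the difference $u'_t-u'_s$ satisfies a linear equation whose right-hand side goes to zero in $L^p$, and the only solution of the homogeneous problem with zero boundary data and decay is zero. The payoff is that no weighted-space Fredholm theory and no Banach-space differentiability of the modulus are ever needed---everything reduces to maximum principle plus interior elliptic estimates for linear equations with merely bounded first-order coefficients.
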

\begin{proof}
 For simplicity, say $n=1$. Now, we have 2 spacetime harmonic functions $u_t$ and $u_s$, define $w:=u_t-u_s=\Psi(t)-\Psi(s)$, we can see $w$ solve the following Dirichlet problem, 
\begin{enumerate}
\item
$\Lp w -K \left(\frac{\Na u_t+\Na u_s}{|\Na u_t|+|\Na u_s|}\right)\cdot \Na w=0$ in $\Me$, 
\item
$w=t-s$ on $\p M$,
\item $w=O^2(|x|^{1-2q})$ as $|x| \to\infty$.
\end{enumerate}

Let $R>>1$, denote the part of $\Me$ enclosed by coordinate sphere $S_R=\{|x|=R\}$ by $M_R$. Let $\phi_R$ be a function satisfying the boundary conditions $\phi_R=t-s$ on $\p M$ and $\phi_R=w=O(R^{1-2q})$ at $S_R$. We can extend $\phi_R$ into $M_R$ such that $||\phi_R||_{C^0}=|t-s|, |\p^k \phi_R|\leq \frac{C}{R^{k}}$, $k=1, 2$.
Then by Theorem 8.33 in  \cite{GT}, we have 
$$||w||_{C^{1,\alpha}(M_R)}\leq C\left( ||w||_{C^0(M_R)}+||\phi_R||_{C^{1,\alpha}(M_R)} \right).$$
                                                                    
Note that the coefficient on the zeroth order term is zero and hence maximum principle (\cite{GT} Theorem 9.1) can be applied. Then we know $||w||_{C^0(M_R)}=||w||_{C^0(\p M_R)}=|t-s|$. Therefore, we have, 
    $$||w||_{C^{1,\alpha}(M_R)}\leq C\lf( |t-s|+\frac{C}{R} \ri).$$ 
Take $R\to \infty$, we have. 
$$||w||_{C^{1,\alpha}(\Me)}\leq C(|t-s|).$$
Therefore,  
$$\frac{\Psi(t)-\Psi(s)}{t-s}$$ converges subsequently as $t\to s$. Hence, $\Psi$ is differentiable in $c$.

\

Further note that $\p_c v=0$, define $u_c^{'}=\p_c u_c=\p_c \Psi$, then we have (equations (5.3) and (5.4) in \cite{HKK}), 
\begin{enumerate}
\item $\Lp u_{c}^{'}+K \frac{\Na u_c}{|\Na u_c|}\cdot \Na u_c^{'}=0$ in $M_{ext}$, 
\item $u_{c}^{'}=1$ on $\p M$,  
\item $u_{c}^{'}=O(|x|^{1-2q})$ as $|x| \to\infty$.
\end{enumerate}
Note that, for all $c$, $u_c^{'}$ are bounded by 1 in $L^{\infty}$ by maximum principle and satisfy a PDE with uniformly bounded coefficients. Therefore, they have uniform $W^{2,p}_{loc}$ bound. In particular, $||\Na u_c^{'}||_{L^p_{loc}}$ are uniformly bounded.

\

Fix $t$, for all $s$, define $\o w_s:= u_{t}^{'}- u_{s}^{'}$, we have
\begin{equation}
\begin{split}
L(\o w_s):=&\Lp (\o w_s) +K\frac{\Na u_t}{|\Na u_t|}\Na (\o w_s)\\
=&f_s\\
:=&K\left(\frac{\Na u_s}{|\Na u_s|}-\frac{\Na u_t}{|\Na u_t|}\right)\Na u_s^{'}.\\
\end{split}
\end{equation}
For all $s$, $\o w_s=0$ on $\p M$, $||\o w_s||_{L^\infty(\Me)}\leq 2$, while the equation above is with uniformly bounded coefficients. 
Therefore, $||\o w_s||_{W^{2,p}_{loc}}$ are uniformly bounded. Also note that, $\o w_s=O(|x|^{1-2q})$ and $f_s\to 0$ in $L^p_{loc}$ as $s\to t$. Then as $s \to t$,  there is a diagonal subsequence convergent to $\o w$ satisfying 
\begin{enumerate}
\item $L(\o w)=0$ in $\Me$, 
\item $\o w=0$ on $\p M$, 
\item $\o w=O(|x|^{1-2q})$ as $|x| \to \infty$.
\end{enumerate}
By maximum principle, $\o w\equiv 0$. Therefore, $c \mapsto \p_c\Psi$ is continuous. 
The same argument can be extended to multiple boundary components correspondingly.  
\end{proof}

Note that $|\Na u|=\frac{\Na u}{|\Na u|} \cdot \Na u$. Hence, the maximum principle still applies. Moreover, $u_{\vec{c}}$ is $C^2$ around $\MS$, therefore Hopf lemma also applies on each $\p_i M$. Therefore, we can follow Section 5 in \cite{HKK} to conclude Lemma \ref{ND} and Theorem \ref{top}. 

\subsection{Intersection of $\S_t$ and $\S$}\label{ITOP}
Recall from Section \ref{Preliminaries}, we have $\Me=M_{0}\cup_{i=1}^l K_i \cup_{j=1}^m \Omega_{j}.$ Notate $\cup_{i=1}^l K_i \cup_{j=1}^m \Omega_{j}$, faces and edges of $\S$ respectively by $\tilde\Omega$, $F$ and $\gamma$.  From Section \ref{Spacetime harmonic coordinates}, we know that $u|_{\tilde \Omega\setminus{\Sigma}}$ and $u|_{M_0\setminus{\Sigma}}$ are $W^{3,p}_{loc}$, $u|_{F}$ is $C^{2}$ and $u|_{\gamma}$ is $C^1$. By \cite{dP} (cf. \cite{F}), this is sufficient to conclude Sard's Theorem on these 4 functions.  Let $a$ and $b$ be the infimum and the supremum of  $u|_{\tilde\Omega}$. In particular for $u|_{\S}$, a.e. $t\in[a,b]$, $\tau_t=\{u|_{\S} = t \}$ is a closed piecewise embedded curve and since $\Sigma$ is compact, we know $\tau_t$ is of finitely many components. Since $u\in C^{1,\alpha}(\Me)$, we can see that a.e. $t\in[a,b]$, the level set $\S_t$ intersects $\S$ transversely along some closed piecewise embedded curves. 

\section{Boundary formulae} \label{Boundary formula}
With spacetime harmonic functions, we can study ADM energy and momentum by the following integral formula. 
\begin{lma}\label{IF}(cf.  \cite{T} Lemma 3.1, \cite{HKK} Proposition 3.2)
Let $(\Omega, g, k)$ be a compact initial data set with $\S:=\SS$. Then, for any spacetime harmonic function $u$ which is $C^{1,\alpha}(\bar \O)\cap C^{2,\alpha}_{loc}(\bar \Omega \setminus \bar \mE)$ , where $\mE$ denotes the edge components of $\S$, 
\be
\begin{split}
&\ \int_{\Omega} \frac12  \frac{|\o \Na \o \Na u|^2}{| \nabla u |} + \mu|\Na u| + \la J, \Na u\ra  \, d V\\
\le & \ \int_{\p_{\neq0}\Omega} \p_\nu | \nabla u | \, d \sigma +\int_{\SS} k(\Na u, \nu) d \sigma +  \frac{1}{2}\int_{\u u}^{\o u} \int_{\S_t} R_{\S_t} dA  d t, 
\end{split} 
\ee
where $\p_{\neq0}\Omega=\{x\in \po \,\ |\,\ |\Na u| \neq 0\}$, $\S_t=\{u=t\}$, $\nu$ is the outward unit normal, $\o u$ and $\u u$ denote the maximum and the minimum of $u$ respectively.
\end{lma}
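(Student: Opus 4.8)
The plan is to adapt the Stern-type integral identity for spacetime harmonic functions (as in \cite{S}, \cite{BKKS}, \cite{HKK}) to the present setting, where the level sets $\S_t$ are allowed to meet the edge set $\mE$ of $\S=\SS$, and where $u$ has only $C^{2,\alpha}_{loc}$ regularity away from $\mE$. First I would recall the pointwise Bochner-type computation underlying the formula: for a spacetime harmonic $u$ one has, on the set where $|\Na u|\neq0$, the divergence identity
\be
\Delta|\Na u| \geq \frac{|\o\Na\o\Na u|^2}{2|\Na u|} - |\Na u|^{-1}\la \Na u, \Na(\Delta u + K|\Na u|)\ra + \text{(Ricci and }k\text{ terms)},
\ee
which, after inserting the definitions of $\mu$ and $J$ and using the spacetime harmonic equation $\Delta u = -K|\Na u|$, is rearranged via the traced Gauss equation on regular level sets into
\be
\Delta |\Na u| \geq \frac{|\o\Na\o\Na u|^2}{2|\Na u|} + \mu|\Na u| + \la J,\Na u\ra - \operatorname{div}\!\left(\text{tangential terms}\right) - \tfrac12 R_{\S_t}|\Na u|
\ee
in the distributional/weak sense appropriate to the regularity at hand. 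Integrating over $\Omega$ and applying the divergence theorem produces the boundary term $\int_{\p_{\neq0}\Omega}\p_\nu|\Na u|$, the momentum term $\int_{\SS}k(\Na u,\nu)$ coming from the second fundamental form contribution when the tangential part of $\o\Na\o\Na u$ is resolved at the boundary, and, via the coarea formula $\int_\Omega (\cdots)|\Na u| = \int \int_{\S_t}(\cdots)$, the level-set curvature term $\tfrac12\int_{\u u}^{\o u}\int_{\S_t}R_{\S_t}$.

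The key technical steps, in order, are: (i) justify the Bochner identity and the coarea manipulations only on the open set $\{|\Na u|\neq0\}$ and control the critical set, using that $|\Na u|\in W^{1,p}_{loc}$ (Kato's inequality, as already invoked in the excerpt) so that $|\Na u|$ is a legitimate test object and the set $\{|\Na u|=0\}$ carries no boundary mass; (ii) handle the low regularity of $u$ near $\mE$ — since $u$ is only $C^{1,\alpha}$ there but $C^{2,\alpha}_{loc}$ on $\bar\Omega\setminus\bar\mE$, I would exhaust $\Omega$ by domains $\Omega_\delta$ obtained by removing a $\delta$-tubular neighborhood of $\mE$, prove the identity on each $\Omega_\delta$, and let $\delta\to0$; the contribution of the artificial boundary $\p\Omega_\delta\cap\{\text{near }\mE\}$ must vanish in the limit, which follows because $\mE$ has codimension two and $|\Na u|$, together with its first derivatives in $L^p$, gives an integrand whose integral over a set of measure $O(\delta)$ (or $O(\delta^{2})$ in area, against a $C^0$ density) tends to zero; (iii) invoke the Sard-type statement from Section \ref{ITOP} (via \cite{dP}, \cite{F}) so that for a.e. $t$ the level set $\S_t$ is a piecewise smooth surface meeting $\S$ transversally along finitely many curves, making $R_{\S_t}$ and the Gauss-equation rearrangement meaningful a.e.\ in $t$.

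The main obstacle I expect is step (ii): carefully accounting for the boundary terms on $\p\Omega_\delta$ near the edge $\mE$ and confirming they do not leave a residual contribution, since a priori $|\Na u|$ could behave badly along $\mE$ and the tangential ``flux'' terms in the Bochner identity are only in $L^p$. The resolution should be that the total flux across the shrinking tube is bounded by $\|\,\nabla|\Na u|\,\|_{L^p}$ times the (shrinking) area of the tube to a positive power by Hölder, which goes to zero; one also needs that no concentration occurs where $\S_t$ meets $\mE$, which is handled by the a.e.-$t$ transversality and the fact that $\mE$ is a finite union of $C^1$ curves. A secondary subtlety is ensuring the momentum boundary term appears with exactly the stated coefficient and sign, which is a matter of bookkeeping the decomposition $\o\Na\o\Na u(X,Y) = \Na\Na u(X,Y) + |\Na u|\,k(X,Y)$ at the boundary against the outward normal $\nu$; I would follow the computation in \cite{HKK} Proposition 3.2 verbatim on each smooth face of $\S$ and sum.
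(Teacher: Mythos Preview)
Your overall architecture is right and matches the paper: reduce to the Bochner/Stern inequality from \cite{HKK}, exhaust $\Omega$ by rounding off the edges $\mE$, and show that the extra boundary flux over the shrinking tube vanishes. The gap is in your justification of step (ii). You propose to control the flux $\int_{\text{tube}}\p_\nu|\Na u|$ by invoking $\Na|\Na u|\in L^p$ and H\"older against the shrinking tube area. But this does not work as stated: first, the hypotheses only give $u\in C^{1,\alpha}(\bar\Omega)$ near $\mE$, so you do not know a priori that $\Na\Na u$ (hence $\Na|\Na u|$) lies in any $L^p$ up to the edge --- this is precisely what needs to be proved; second, a volume $L^p$ bound does not directly control a surface integral over $\{r=\delta\}$ without a trace-type argument, and your remark ``against a $C^0$ density'' is false since $\p_\nu|\Na u|$ is not bounded near $\mE$.

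The paper closes this gap with a different, sharper input: Schauder estimates with scaling (as in \cite{Simon}, \cite{GT} Corollary~6.3) applied on conic annuli $A(r)$ around a point of $\bar\mE$. From $u\in C^{1,\alpha}(\bar\Omega)$ one obtains the pointwise bound $|\Na\Na u|_{C^0(A(r))}\le C r^{\alpha-1}$. This immediately gives (a) $|\Na\Na u|\in L^1(\S)$ and $L^1(\Omega)$, and (b) $r\cdot|\Na\Na u|\to 0$ as $r\to 0$, so the flux over the cylinder of radius $r$ (area $\sim r$) is $O(r^\alpha)\to 0$. For the volume side, the paper does not rely on $\Delta|\Na u|\in L^1$ directly; instead it uses that $(\Delta|\Na u|)_-\le C|\Na u|$ (from \cite{HKK} Lemma~3.1) is integrable, and then monotone convergence on $(\Delta|\Na u|)_+$. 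You should replace your H\"older heuristic by this scaling estimate; without it the limiting step is not justified.
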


\begin{proof} 
We here assume that $|\Na u|\neq 0$ for the simplicity of presentation.  For the full generality, one should first consider $\sqrt{|\Na u|^2+\delta^2}$ for $\delta>0$ and then take limit as $\d\to 0$ (see \cite{S},\cite{BS},\cite{HKK},\cite{HMT} $Remark$ 3.3).  

It suffices to verify the divergence theorem such that the following holds.  
\be\label{divergence}
\begin{split}
\int_{\p \O} \p_{\nu} |\Na u| = \int_{\O} \Lp |\Na u|. 
\end{split}
\ee

Let $\{\O_r\}_{r>0}$ be an exhaustion of $\O$ with vertices and edges of $\O$ being smoothed out, where $r$ is the parameter of radius of spherical cap around the vertices and rounded-off cylinders along the edges. The functions are regular enough on $\O_r$ so that the divergence theorem can be applied. 
\be\label{exhaustion}
\begin{split}
\int_{\p \O_r} \p_{\nu_r} |\Na u| = \int_{\O_r} \Lp |\Na u|. 
\end{split}
\ee

From a remark in the proof of Theorem 1.4 in \cite{Li0}, elliptic estimates with scaling are important in showing integrability. Let $p \in \bar \mE$,  w.l.o.g., identified as $0$ in a local coordinate chart. From the fact that $u\in C^{1,\alpha}(\bar \Omega)$ and Schauder estimates with scaling (e.g. \cite{Simon}, \cite{GT} Corollary 6.3) applied on $u$ in a (conic) annulus $A(r)$ around $p$, where $r>0$ is small, we have $|\Na \Na u|_{C^0(A(r))}\leq C r^{\alpha-1}$. Thus, $|\Na \Na u|$ is integrable on $\SS$ and $\O$. Moreover, $r|\Na\Na u| \to 0$ as $r\to 0$. Therefore, 
\be \label{boundary integral convergence}
\begin{split}
\int_{\p \O_r} \p_{\nu_r} |\Na u| \to \int_{\p \O} \p_{\nu} |\Na u|. 
\end{split}
\ee

On the other hand, $\Lp u= -K|\Na u|$, first note that by Lemma 3.1 in \cite{HKK}, we have
\be
\begin{split}
\Lp|\Na u| \geq &-C(||g||_{C^2}+||k||_{C^1})|\Na u|. 
\end{split}
\ee 
In particular,  
\be
\begin{split}
(\Lp|\Na u|)_- \leq C(||g||_{C^2}+||k||_{C^1})|\Na u|, 
\end{split}
\ee 
i.e.  $(\Lp|\Na u|)_-$ is integrable on $\O$.  

\

By \eqref{exhaustion} and integrability of $(\Lp|\Na u|)_-$,  we have on $\O_r$,
\be
\begin{split}
\int_{\O_r} (\Lp|\Na u|)_+= \int_{\p \O_r} \p_{\nu_r} |\Na u| + \int_{\O_r} (\Lp|\Na u|)_-. 
\end{split}
\ee

Hence, although $u$ is not necessarily $C^2$ on $\bar \mE$,  we can conclude that \eqref{divergence} holds by \eqref{boundary integral convergence} and monotone convergence theorem as $r\to 0$. Moreover, integrability of the integrands in \eqref{IF} follows from the elliptic estimates aforementioned.  
\end{proof}

We will express the boundary terms of Lemma \ref{IF} explicitly for spacetime harmonic functions on manifolds with boundary. 
Note that we have to make use of the fact that $\Lp u=-K|\Na u|$ instead of 0 in \cite{HMT}. 
\begin{lma}\label{BF}(cf. \cite{HMT} Proposition 2.2) 
Let $(\Omega, g, k)$ be a compact initial data set with $\S:=\SS$. Then, for any spacetime harmonic function $u$ which is $C^{1,\alpha}(\bar \O)\cap C^{2,\alpha}_{loc}(\bar \Omega \setminus \bar \mE)$ , where $\mE$ denotes the edge components of $\S$, 
\be\label{Boundary Integral}
\begin{split}
 &\int_{\S_{\neq 0}} \p_\nu | \nabla u | \, d \sigma +\int_{\S} k(\Na u, \nu) \, d \sigma\\ 
=& \ \int_{\S} \, \ \pi(\Na u,\nu)- H |\Na u| \,\  d \sigma+\int_{\u u}^{\o u} \int_{\tau_t } \kappa\, ds\,  dt\\
&+ \int_{\S_{\neq 0}}  - \frac{\nu(u)}{|\Na u|}\Delta_{_\S} \eta   +  \frac{ (\Na_\S \eta) (\nu (u))}{|\Na u|} \,\ d \sigma\\
&+ \int_{\S_{\neq 0}\cap \{ \Na_{\S} \eta\neq 0 \} }  -\frac{\nu(u)}{|\Na u|} \la \nabla_{\tau_t'} \tau_t', \Na_\S \eta \ra  \, d \sigma, 
\end{split}
\ee
where $\eta=u|_{\S}$, $\S_{\neq 0} = \{ x \in \S \,\ | \,\ |\Na u|\neq 0 \} $, $\S_t=\{u=t\}$, $\tau_t=\S_{\neq 0}  \cap \S_t\cap \{ \Na_{\S} \eta\neq 0 \} $, $H$ is computed with respect to the outward unit normal $\nu$, $\o u$ and $\u u$ are the maximum and the minimum of $u$ respectively.
\end{lma}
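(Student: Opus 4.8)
The plan is to start from the pointwise identity expressing $\partial_\nu|\nabla u|$ on $\S$ in terms of the Hessian of $u$ and the second fundamental form of $\S$, and then to reorganize the resulting expression using the coarea formula along the level sets $\tau_t$ of $\eta=u|_\S$. First I would work at a point of $\S_{\neq 0}$ and decompose $\nabla u = \nu(u)\,\nu + \nabla_\S\eta$, so that $|\nabla u|^2 = \nu(u)^2 + |\nabla_\S\eta|^2$. Differentiating $|\nabla u|$ in the $\nu$ direction gives $\partial_\nu|\nabla u| = \frac{1}{|\nabla u|}\nabla\nabla u(\nabla u,\nu)$. I would then split $\nabla\nabla u(\nabla u,\nu)$ using the tangential–normal decomposition of $\nabla u$: the term $\nu(u)\nabla\nabla u(\nu,\nu)$ is controlled via the spacetime harmonic equation $\Delta u = -K|\nabla u|$ together with the trace decomposition $\Delta u = \nabla\nabla u(\nu,\nu) + \Delta_\S u + H\,\nu(u)$, where $\Delta_\S u$ denotes the tangential Laplacian of $u$ restricted to $\S$; the term $\nabla\nabla u(\nabla_\S\eta,\nu)$ is rewritten using $\nabla\nabla u(X,\nu) = X(\nu(u)) - \nabla\nabla u(X,\nabla^\S_X\nu) = X(\nu(u)) - A(X,\cdot)$-type contributions, bringing in the second fundamental form of $\S$ and hence (after taking the full tangential trace) the mean curvature $H$ and $\pi(\nabla u,\nu)$. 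Collecting terms, $\int_{\S_{\neq 0}}\partial_\nu|\nabla u| + \int_\S k(\nabla u,\nu)$ should reduce to $\int_\S \pi(\nabla u,\nu) - H|\nabla u|$ plus a remainder that is purely intrinsic to $\S$ and involves only $\eta$ and its tangential derivatives.

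The second stage is to identify that intrinsic remainder with the stated sum of a coarea integral $\int_{\u u}^{\o u}\int_{\tau_t}\kappa\,ds\,dt$ (where $\kappa$ is the geodesic curvature of $\tau_t$ in $\S$) and the three corrector integrals. For this I would apply the two-dimensional version of the Stern-type computation from \cite{S}, \cite{HMT}: on the surface $\S$, for the function $\eta$, one has the identity relating $\Delta_\S\eta$, $|\nabla_\S\eta|$, the geodesic curvature of the level curves $\tau_t$, and the derivative of $|\nabla_\S\eta|$ in the level-set normal direction. Concretely, writing $\tau_t'$ for the unit tangent to $\tau_t$ and using $\Delta_\S\eta = \nabla_\S^2\eta(\tfrac{\nabla_\S\eta}{|\nabla_\S\eta|},\tfrac{\nabla_\S\eta}{|\nabla_\S\eta|}) + \kappa|\nabla_\S\eta| + \langle\nabla_{\tau_t'}\tau_t',\nabla_\S\eta\rangle$-type terms, integrating against $\frac{\nu(u)}{|\nabla u|}$ and invoking the coarea formula on $\S$ converts the geodesic-curvature piece into $\int\int_{\tau_t}\kappa$ and leaves exactly the $\Delta_\S\eta$, $(\nabla_\S\eta)(\nu(u))$, and $\langle\nabla_{\tau_t'}\tau_t',\nabla_\S\eta\rangle$ correctors appearing on the right-hand side of \eqref{Boundary Integral}. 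The asymptotic/boundary bookkeeping (behavior near $\bar\mE$, where $u$ is only $C^{1,\alpha}$, and the set $\{|\nabla u|=0\}$) is handled exactly as in Lemma \ref{IF}: one first runs the argument with $|\nabla u|$ replaced by $\sqrt{|\nabla u|^2+\delta^2}$ and with the smoothed exhaustion $\{\Omega_r\}$, then lets $\delta\to 0$ and $r\to 0$ using the scaled Schauder bound $|\nabla\nabla u|_{C^0(A(r))}\le Cr^{\alpha-1}$ to justify passing the divergence theorem and the coarea identities to the limit.

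The main obstacle I expect is the careful handling of the critical set and the edge set simultaneously in the coarea step: Sard's theorem for $u|_\S$, $u|_F$, $u|_\gamma$ (already established in Section \ref{ITOP}) guarantees that for a.e. $t$ the curve $\tau_t$ is a finite union of piecewise embedded closed curves meeting the edges transversally, but one must check that the geodesic-curvature integral $\int_{\tau_t}\kappa\,ds$ is finite for a.e. $t$ and integrable in $t$, and that no boundary contribution is lost at the edges $\gamma$ when integrating by parts on $\S$ — the resolution is again the scaled elliptic estimate, which forces $r|\nabla\nabla u|\to 0$ and kills the edge contributions in the limit, mirroring the proof of Lemma \ref{IF}. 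A secondary technical point is that $k$, hence $\pi$, need not be continuous across $\S$; but since Lemma \ref{BF} is stated for a compact initial data set $(\Omega,g,k)$ with a single smooth-up-to-boundary side, this does not enter here and only matters when the lemma is applied to $g_\pm, k_\pm$ in the proof of Theorem \ref{main}.
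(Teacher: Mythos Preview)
There is a genuine gap in your proposal, and it stems from a misreading of what $\kappa$ denotes in \eqref{Boundary Integral}. You interpret $\kappa$ as the geodesic curvature of $\tau_t$ \emph{in the boundary surface} $\S$, and accordingly plan a Stern-type computation intrinsic to $\S$ for the function $\eta$. In the paper, however, $\kappa$ is the geodesic curvature of $\tau_t$ \emph{in the level surface} $\S_t\subset\O$; this is the quantity that will later pair with $\int_{\S_t}R_{\S_t}$ in Lemma \ref{IF} via Gauss--Bonnet. These two curvatures differ by an extrinsic contribution from the second fundamental form $\Pi$ of $\S$ in $\O$, and that difference is exactly what makes the formula work.

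Concretely, your assertion that $\int_{\S_{\neq 0}}\p_\nu|\Na u|+\int_\S k(\Na u,\nu)$ already reduces to $\int_\S \pi(\Na u,\nu)-H|\Na u|$ plus a remainder ``purely intrinsic to $\S$'' is false. From the pointwise computation of $\p_\nu|\Na u|$ one obtains $-H\,\nu(u)^2/|\Na u|$ (via $\Na\Na u(\nu,\nu)$) and $-\Pi(\Na_\S\eta,\Na_\S\eta)/|\Na u|$ (via $\Na\Na u(\Na_\S\eta,\nu)$); these do \emph{not} combine to $-H|\Na u|$. The missing piece, $-|\Na_\S\eta|^2\,\Pi(\tau_t',\tau_t')/|\Na u|$, is supplied only by the $\kappa$-integral once one writes the normal to $\tau_t$ inside $\S_t$ as $\nu_t=\sin\theta\,\nu-\cos\theta\,n_t$ with $\cos\theta=\nu(u)/|\Na u|$ and $\sin\theta=|\Na_\S\eta|/|\Na u|$; expanding $-\kappa=\la\nabla_{\tau_t'}\tau_t',\nu_t\ra$ then yields $-\sin\theta\,\Pi(\tau_t',\tau_t')-\cos\theta\,\la\nabla_{\tau_t'}\tau_t',n_t\ra$. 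After the coarea formula, the first of these combines with the $\Pi(\Na_\S\eta,\Na_\S\eta)$ term to produce the full trace $H$ (since $\{n_t,\tau_t'\}$ is an orthonormal frame on $\S$), and the second gives the last corrector integral in \eqref{Boundary Integral}. Your intrinsic-to-$\S$ plan never sees the $\Pi(\tau_t',\tau_t')$ contribution and therefore cannot close.
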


\begin{proof}
As discussed in Section \ref{ITOP}, a.e. $t\in[\u u, \o u]$, $t$ is regular value of $u$ and $\S_t$ intersects transversely with $\S$ on $\tau_t$ which is a closed piecewise embedded curve of finitely many components.  Then, we can consider
\be
\begin{split}
&\int_{\S_{\neq 0}} \p_\nu | \nabla u | \, d \sigma\\
= & \int_{\S_{\neq 0}} \p_\nu | \nabla u | \, d \sigma - \int_{\u u}^{\o u} \left( \int_{\tau_t } \kappa \, ds \right) dt+ \int_{\u u}^{\o u} \left( \int_{\tau_t } \kappa \, ds \right) dt .
\end{split}
\ee

We are going to to express $\p_\nu |\Na u|$ and $\kappa$ explicitly. First, for $\p_\nu |\Na u|$, we have
\be
\begin{split}
\p_\nu|\Na u|=&\frac{\Na \Na u(\Na u, \nu)}{|\Na u|}\\
=&\frac{\nu (u)}{|\Na u|}\Na \Na u (\nu, \nu)+\frac{1}{|\Na u|}\Na \Na u (\Na_\S \eta, \nu)\\
\end{split}
\ee

\vh

Using $ \Delta_\Omega u = -K |\Na u| $, we have
\be
\nabla \Na u ( \nu, \nu) = \Lp_\Omega u- H \nu(u)  - \Delta_{\S} \eta= -K|\Na u|- H \nu(u) - \Delta_{\S} \eta. 
\ee
We also have, 
\be
\begin{split}
\Na \Na u (\Na_\S \eta, \nu)=&(\Na_\S \eta)(\nu(u))-(\Na_{\Na_\S \eta} \nu)(u)\\
=&(\Na_\S \eta)(\nu(u))-\la \Na_{\Na_\S \eta} \nu, \Na_\S \eta \ra+\nu(u) \la \Na_{\Na_\S \eta} \nu, \nu \ra\\
=&(\Na_\S \eta)(\nu(u))-\Pi (\Na_\S \eta, \Na_\S \eta),\\
\end{split}
\ee
where $\Pi$ denotes the second fundamental form on $\S$ with respect to $\nu$. 

\vh

Thus, we have, 
\be\label{Boundary term general}
\begin{split}
 \p_\nu | \nabla u  |= & \ -\frac{\Pi (\Na_\S \eta, \Na_\S \eta)}{|\Na u|}    +  \frac{ (\Na_\S \eta) (\nu (u))}{|\Na u|} \\
 & -K\nu(u)- H \frac{|\nu (u)|^2}{|\Na u|} - \frac{\nu(u)}{|\Na u|}\Delta_{_\S} \eta . 
\end{split}
\ee
And in particular,  if $\Na_{\S}{\eta}=0$, 
\be\label{Boundary term constant}
\begin{split}
 \p_\nu | \nabla u  |= -K\nu(u)- H |\Na u|- \frac{\nu(u)}{|\Na u|}\Delta_{_\S} \eta . 
\end{split}
\ee

Then, we have to study the geodesic curvature $\kappa$. At a point $x \in \tau_t$, 
in particular, we know $\Na_{\S} \eta \neq 0$,
we have the following geometric vectors:

\vh

\begin{itemize}
\item $ \nu$,  the outward unit normal to $ \p \Omega$; 

\vh

\item $ \nabla_{_\S} \eta$, the gradient of $ u = \eta $ on $ \p \Omega$, which is perpendicular to $ \tau_t$ in $\S$.
We let $ n_t = \frac{1}{ | \nabla_{_\S} \eta|} \nabla_{_\S} \eta $;

\vh

\item $ \tau_t'$,  the unit tangent vector to the curve $\tau_t $; 

\vh

 \item $ \nu_{t} $, the outward unit normal to $ \tau_t $ with respect to $ \Sigma_t$; and

\vh

\item $ n = \frac{1}{|\nabla u | } \nabla u $, the normal direction to $ \Sigma_t$ along which $u$ increases. 

\end{itemize}
 Both 
$$ \{ \nu, n_t  \} \ \text{and} \ \{ \nu_t , n \} $$
are orthonormal basis for the normal bundle $ {\tau_t'}^\perp$.
We can express $n$ in the basis of $\nu$ and $n_t$, 

\be
\begin{split}
n
=&\la n,\nu \ra \nu + \la n, n_t \ra n_t\\
=&\frac{\nu (u)}{|\Na u |}\nu +\frac{|\Na_\S \eta|}{|\Na u|}n_t\\
\end{split}
\ee 
Let $ \theta \in [0, \pi]$ be the angle between $ \nu $ and $ n$, then we have 
\be \label{ct}
\begin{split}
\cos \theta = \frac{\nu (u)}{|\Na u |},\\
\end{split}
\ee
and
\be \label{st}
\begin{split}
\sin \theta = \frac{|\Na_\S \eta|}{|\Na u|}.
\end{split}
\ee

On the other hand, consider the geodesic curvature $\kappa$, by definition,
\be
\begin{split}
\kappa = & \ \la \nabla_{ \tau_t'} \nu_t , \tau_t ' \ra ,
\end{split}
\ee
and 
\be
\nu_t =  \sin \theta \, \nu - \cos \theta \, n_t. 
\ee

Thus,
\be
\begin{split}
 -\kappa = &\la \nabla_{\tau_t ' } \tau_t' , \nu_t \ra \\
= & \ \la \nabla_{\tau_t ' } \tau_t' , \,  \sin \theta \, \nu - \cos \theta \, n_t  \ra \\
= & \ - \sin \theta \, \Pi (\tau_t', \tau_t' ) - \cos \theta \, \la \nabla_{\tau_t'} \tau_t', n_t \ra \\ 
\end{split}
\ee

\bigskip

Therefore, on $\S$, 
by co-area formula, \eqref{ct} and \eqref{st},  we have, 
\be
\begin{split}
& \int_{\u u}^{\o u} \left( - \int_{\tau_t} \kappa \, d s \right) \, d t \\
=&  \int_{\S_{\neq 0}\cap \{ \Na_{\S} \eta\neq 0 \} } -|\Na_\S \eta| \sin \theta \, \Pi ( \tau_t', \tau_t') \,\ d \sigma\\ &\, 
- \int_{\S_{\neq 0}\cap \{ \Na_{\S} \eta\neq 0 \} } |\Na_\S \eta| \cos\theta \, \la \nabla_{\tau_t'} \tau_t', n_t \ra \,\ d \sigma\\
=&  \int_{\S_{\neq 0}\cap \{ \Na_{\S} \eta\neq 0 \} } - \frac{|\Na_\S \eta|^2}{|\Na u|} \,  \Pi ( \tau_t', \tau_t') \,\ d \sigma- \int_{\S_{\neq 0}\cap \{ \Na_{\S} \eta\neq 0 \} }  \frac{\nu(u)}{|\Na u|} \la \nabla_{\tau_t'} \tau_t', \Na_\S \eta \ra \,\  d \sigma.\\
\end{split}
\ee

Together with \eqref{Boundary term general}, \eqref{Boundary term constant}, we have
\be
\begin{split}
& \ \int_{\S_{\neq 0}} \p_\nu | \nabla u | \, d \sigma + \int_{\u u}^{\o u} \left( - \int_{\tau_t} \kappa \, d s \right) \, d t \\
=& \ \int_{\S_{\neq 0}\cap \{ \Na_{\S} \eta= 0 \}} \, \  -K\nu(u)- H |\Na u|- \frac{\nu(u)}{|\Na u|}\Delta_{_\S} \eta \,\ d \sigma \\
& + \ \int_{\S_{\neq 0}\cap \{ \Na_{\S} \eta\neq 0 \} } \, \ -K\nu(u)- H \frac{|\nu (u)|^2}{|\Na u|} \\
&+ \ \int_{\S_{\neq 0}\cap \{ \Na_{\S} \eta\neq 0 \} }  \,  -\frac{| \nabla_{_\S } \eta|^2}{|\Na u|} \Pi ( n_t , n_t) 
- \frac{|\Na_\S \eta|^2}{|\Na u|} \,  \Pi ( \tau_t', \tau_t') \,\ d \sigma \\
&+ \int_{\S_{\neq 0}\cap \{ \Na_{\S} \eta\neq 0 \} }  -\frac{\nu(u)}{|\Na u|} \la \nabla_{\tau_t'} \tau_t', \Na_\S \eta \ra  - \frac{\nu(u)}{|\Na u|}\Delta_{_\S} \eta   +  \frac{ (\Na_\S \eta) (\nu (u))}{|\Na u|} \,\ d \sigma  \\
=& \ \int_{\S} \, \ -Kg(\Na u,\nu)- H |\Na u| \,\  d \sigma \\
&+ \int_{\S_{\neq 0}}  - \frac{\nu(u)}{|\Na u|}\Delta_{_\S} \eta   +  \frac{ (\Na_\S \eta) (\nu (u))}{|\Na u|} \,\ d \sigma\\
&+ \int_{\S_{\neq 0}\cap \{ \Na_{\S} \eta\neq 0 \} }  -\frac{\nu(u)}{|\Na u|} \la \nabla_{\tau_t'} \tau_t', \Na_\S \eta \ra  \, d \sigma.
\end{split}
\ee

By the definition of conjugate momentum tensor $\pi$, we can conclude the lemma. 
\end{proof}

\section{Spacetime Positive Mass Theorem with Corners}\label{SPMTC}
Recall the decomposition $\Me=M_{0}\cup_{i=1}^l K_i \cup_{j=1}^m \Omega_{j}$ in Section \ref{Preliminaries}. We may first assume that $\Me=M_{0}\cup \Omega$. 
Without loss of generality, we can rotate the coordinates prior to solving for the spacetime harmonic coordinate so that $P=(-|P|, 0, 0)$.
Under the new coordinate system $x=(u,x^2,x^3)$, where $u$ is the spacetime harmonic coordinate obtained in Section \ref{Spacetime harmonic coordinates}. Let $L>>1$, define the following,
\begin{enumerate}
\item
$T_{L}=\{x\in M_0 \,\ |\,\ |u|\leq L, (x^2)^2+(x^3)^2=L^2\}$, 
\item
$D_{L}^{\pm}=\{x \in M_0 \,\  |\,\ u=\pm L, (x^2)^2+(x^3)^2\leq L^2 \}$, 
\item
$C_L=T_L \cup D_L^+ \cup D_L^-$.
\end{enumerate}
We would then label $u$ by $x^1$.  Let $M_L$ be the portion of $M_0$ bounded by $C_L $ and the corner $\S$.  Since $L>>1$, we can assume $\MS\subset M_L\cup\O$. We would use the following notations.  
\begin{itemize}
\item 
$\S_t^L=\S_t \cap M_L$,

\item 
$\S_t'=\S_t \cap \O$,
\item 
$\tau_t^L=\S_t \cap C_L$,
\item 
$\tau_t=\S_t\cap \S$,
\item 
$N$, the outward unit normal on $\p M_L$,
\item
$\nu_L$, the unit normal vector on $C_L$ pointing to the infinity of $\mathcal{E}$,
\item
 $\nu$, the unit normal vector on $\S$ pointing to the infinity of $\mathcal{E}$,
 \item
 $\nu_{\MS}$, the unit normal vector on $\MS$ pointing out of $\Me$,  
 \item
 $\MS^L$, the subcollection of $\MS$ which are in $M_0$, 
 \item
 $\MS'$, the subcollection of $\MS$ which are in $\O$,
 \item 
 $\eta=u|_{\S}$,
  \item
 $\Na^{\pm}$ and $|\cdot|_{\pm}$, the connections and norms with respect to $g_{\pm}$,
 \item
 $A_{\neq 0}=\{ x\in A \,\ | \,\ |\Na u|\neq 0 \}$ for any $A\subset M$.
\end{itemize}
From Lemma \ref{ND},  we can choose $\vec{c}$ such that $u$ on  
weakly outer trapped and weakly inner trapped components of $\MS$, we would have $\p_{\nu_{\MS}} u \leq 0$ and $\p_{\nu_{\MS}} u \geq 0$ respectively.
Furthermore, $\MS$ has empty intersection with regular level sets. 
From Section \ref{ITOP}, we know $\tau_t$ are closed piecewise embedded curves for a.e. $t$. Apply Lemma \ref{IF} on $M_L$ and Lemma \ref{BF} on $\S$, we have, 
\be\label{I1}
\begin{split}
&\ \int_{M_L} \frac12  \left( \frac{|\o \Na \o \Na u|^2}{| \nabla u |} + 2\left( \mu| \nabla u |+\la J , \Na u \ra \right) \right) \, d V\\
\le &\ \int_{\p_{\neq 0} M_L} \p_N | \nabla u | \, d \sigma +\int_{\p M_L} k(\Na u, N) \, d \sigma +  \frac{1}{2}\int_{-L}^{L} \int_{\S_t^L} R_{\S_t^L}\, dA d t\\
=&\int_{\MS^L_{\neq 0}} \p_{\nu_{\MS}} | \nabla u | \, d \sigma +\int_{\MS^L} k(\Na u, \nu_{\MS})\, d \sigma\\
&+ \int_{\p C_L} \p_{\nu_L} | \nabla u | \, d \sigma + \int_{-L}^{L} \left( - \int_{\tau_t^L} \kappa \, d s \right) \, dt+\int_{\p C_L} k(\Na u, \nu_L)\, d \sigma \\
& - \ \int_{\S} \, \ \pi_+(\Na u,\nu) - H_+ |\Na u| \,\  d \sigma \\
&+ \int_{\S_{\neq 0}}   \frac{\nu(u)}{|\Na u|}\Delta_{_\S} \eta   -  \frac{ (\Na_\S \eta) (\nu (u))}{|\Na u|} \,\ d \sigma
+ \int_{\S_{\neq 0}\cap \{ \Na_{\S} \eta\neq 0 \} } \frac{\nu(u)}{|\Na u|} \la \nabla_{\tau_t'} \tau_t', \Na_\S \eta \ra  \, d \sigma\\
&+\frac{1}{2} \int_{-L}^{L} \int_{\S_t} R_{\S_t} dA\, d t+\int_{-L}^{L} \int_{-\tau_t } \kappa\, ds\,  dt+\int_{-L}^{L} \int_{\tau^L_t} \kappa\, ds\,  dt.
\end{split}
\ee
On the other hand, by the asymptotics of $u$ and maximum principle, $\tau_t^L$ is a circle. By computations in Section 6 of \cite{BKKS} and Section 6 of \cite{HKK}, we get, 
\be\label{I2}
\begin{split}
&\ \int_{\p C_L} \p_{\nu_L} | \nabla u | \, d \sigma + \int_{-L}^{L} \left( - \int_{\tau_t^L} \kappa \, d s \right) \, dt+\int_{\p C_L} k(\Na u, \nu_L)\, d \sigma \\
=&-4\pi L+\frac{1}{2}\int_{C_L} \left( g_{ij,i}-g_{ii,j} \right) \nu_L^j \, dA+ \int_{C_L} \pi_{1j} \nu_L^j \, dA+ O(L^{1-2q})+O(L^{-q}). 
\end{split}
\ee

Similarly, apply Lemma \ref{IF} on $\O$ and Lemma \ref{BF} on $\S$, we have, 
\be \label{I3}
\begin{split}
&\ \int_{\O} \frac12  \left( \frac{|\o \Na \o \Na u|^2}{| \nabla u |} + 2\left( \mu| \nabla u |+\la J , \Na u \ra \right) \right) \, d V\\
\leq &\ \int_{\MS'_{\neq 0}} \p_{\nu_{\MS}} | \nabla u | \, d \sigma +\int_{\MS'} k(\Na u, \nu_{\MS})\, d \sigma + 2\pi  \int_{-L}^{L}  \chi (\Sigma_t' )\, d t\\
&+ \ \int_{\S} \, \ \pi_-(\Na u,\nu) - H_- |\Na u| \,\  d \sigma \\
&+ \int_{\S_{\neq 0}}  - \frac{\nu(u)}{|\Na u|}\Delta_{_\S} \eta   +  \frac{ (\Na_\S \eta) (\nu (u))}{|\Na u|} \,\ d \sigma \\
&+ \int_{\S_{\neq 0}\cap \{ \Na_{\S} \eta\neq 0 \} }  -\frac{\nu(u)}{|\Na u|} \la \nabla_{\tau_t'} \tau_t', \Na_\S \eta \ra  \, d \sigma\\
&+\frac{1}{2} \int_{-L}^{L} \int_{\S_t} R_{\S_t} dA\, d t+\int_{-L}^{L} \int_{\tau_t } \kappa\, ds\,  dt..
\end{split}
\ee
By \eqref{Boundary term constant}, on $\MS$, with the corresponding choice of sign of normal derivatives as aforementioned,
\be\label{I4}
\begin{split}
&\ \int_{\MS_{\neq 0}} \p_{\nu_{\MS}} | \nabla u | + k(\Na u, \nu_{\MS})\, d \sigma\\
=& \sum_{i=1}^n \int_{\p_iM_{\neq 0}}  H |\p_{\nu_{\MS}} u|-tr_{\p_iM}k(\p_{\nu_{\MS}} u) \, d\sigma \\
\leq &\, 0,
\end{split}
\ee 
where $\p_i M$ are the components of $\MS$ and $H$ is computed with respect to $-\nu_{\MS}$.

\

Note that $u$ is $C^1$ across $\S$, $\nu(u)+(-\nu)(u)$ is constantly zero on $\S$.  Furthermore,  $g$ is continuous, in case $\pm \tau_t$ have some turning angles, they are of the opposite signs. Moreover, by Theorem \ref{top}, we know that $\S_t$ has a single end modeled on $\R^2\setminus B_1$. Therefore, for $L>>1$, a.e. $t\in [a,b]$, $1\geq \chi(\S_t)=\chi(\S_t^L)+\chi(\S_t')$. Summing equations \eqref{I1} and \eqref{I3}, and applying Gauss-Bonnet Theorem, we have, 
\be
\begin{split}
&\ \int_{M_L\cup \O} \frac12  \left( \frac{|\o \Na \o \Na u|^2}{| \nabla u |} + 2\left( \mu| \nabla u |+\la J , \Na u \ra \right) \right) \, d V\\
\leq & \ 2\pi  \int_{-L}^{L}  \chi (\Sigma_t )\, dt -4\pi L+\frac{1}{2}\int_{C_L} \left( g_{ij,i}-g_{ii,j} \right) \nu_L^j \, dA+ \int_{C_L} \pi_{1j} \nu_L^j \, dA\\
&- \ \int_{\S} \, \ \pi_+(\Na u,\nu) - H_+ |\Na u| \,\  d \sigma + \ \int_{\S} \, \ \pi_-(\Na u,\nu) - H_- |\Na u| \,\  d \sigma \\
&+ O(L^{1-2q})+O(L^{-q})\\
\leq &\, \frac{1}{2}\int_{C_L} \left( g_{ij,i}-g_{ii,j} \right) \nu_L^j \, dA+ \int_{C_L} \pi_{1j} \nu_L^j \, dA\\
&+ \ \int_{\S} \, \left( H_+-H_- \right) |\Na u| \,\  d \sigma + \ \int_{\S} \,  (\pi_--\pi_+)(\Na u,\nu) \,\  d \sigma \\
& + O(L^{1-2q})+O(L^{-q}). 
\end{split}
\ee
By Proposition 4.1 in \cite{Bartnik}, as $L\to \infty$, we have
$$\frac{1}{2}\int_{C_L} \left( g_{ij,i}-g_{ii,j} \right) \nu_L^j \, dA+ \int_{C_L} \pi_{1j} \nu_L^j \, dA \to 8\pi \left( E+P_1 \right) = 8\pi \left( E-|P| \right).$$ 

For the general case, we can apply the same idea onto each component $M_0$, $K_i$ and $\Omega_j$ and sum up the integrals. Theorem \ref{main} is therefore proved. 

As we can see from the proof above, particularly the term $\pi_{+}(\Na u, \nu)$, in general we get
\begin{cor}\label{gen}For $\va\in \Sp\subset \R^3$, if a spacetime harmonic function $\ua$ is asymptotic to $a^ix^i$, then  
\be
\begin{split}
16\pi (E+\la \va,P \ra)\ge & \int_{M_{ext}\setminus \S}\left(\frac{|\o \Na \o \Na \ua|^2}{|\nabla \ua|}+2(\mu|\nabla \ua|+ \la J, \Na \ua \ra) \right)\\
&+2\int_\S(H_--H_+)|\nabla \ua|-2\int_\S (\pi_--\pi_+)(\Na \ua, \nu).
\end{split}
\ee
\end{cor}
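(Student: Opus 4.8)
The plan is to observe that Corollary \ref{gen} is not a new statement to be proved from scratch but rather a direct reading-off from the proof of Theorem \ref{main} just completed, with the single modification that the spacetime harmonic function $\ua$ is asymptotic to the linear function $a^i x^i$ rather than to $x^1$. First I would note that the existence and regularity of such a $\ua$ follows verbatim from the Proposition in Section \ref{Spacetime harmonic coordinates}: one simply replaces the asymptotic model coordinate $x^1$ by $\va\cdot x=a^ix^i$, which after an orthogonal change of coordinates (rotating so that $\va$ becomes $e_1$) is literally the same construction. Thus $\ua\in W^{2,p}_{loc}(\Me)\cap W^{3,p}_{loc}(\Me\setminus\S)$, is spacetime harmonic, satisfies $\ua-a^ix^i=O^2(|x|^{1-q})$, and is $C^2$ on the faces of $\S$; the level-set topology results (Lemma \ref{ND}, Theorem \ref{top}) and the intersection analysis of Section \ref{ITOP} apply without change since they only used these regularity and asymptotic properties.

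Next I would run the integral identities \eqref{I1}, \eqref{I2}, \eqref{I3}, \eqref{I4} with $\ua$ in place of $u$, exactly as in the proof of Theorem \ref{main}. The only place where the choice of asymptotic direction enters quantitatively is in the boundary computation at $C_L$, i.e. in \eqref{I2} and in the final application of Proposition 4.1 of \cite{Bartnik}. Tracking that computation with $\ua\sim a^ix^i$, the Euclidean flux integral $\frac12\int_{C_L}(g_{ij,i}-g_{ii,j})\nu_L^j\,dA$ still converges to $8\pi E$ (this part is independent of which linear function one uses), while the momentum term $\int_{C_L}\pi_{1j}\nu_L^j\,dA$ becomes $\int_{C_L}a^i\pi_{ij}\nu_L^j\,dA\to 8\pi\, a^iP_i=8\pi\la\va,P\ra$ by the definition of $P_i$. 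Hence the right-hand constant $8\pi(E+P_1)=8\pi(E-|P|)$ in the proof of Theorem \ref{main} is replaced by $8\pi(E+\la\va,P\ra)$, and multiplying through by $2$ gives the stated inequality $16\pi(E+\la\va,P\ra)\ge\cdots$.

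Finally, the bulk term $\int_{M_{ext}\setminus\S}\big(\frac{|\o\Na\o\Na\ua|^2}{|\Na\ua|}+2(\mu|\Na\ua|+\la J,\Na\ua\ra)\big)$ and the corner terms $2\int_\S(H_--H_+)|\Na\ua|-2\int_\S(\pi_--\pi_+)(\Na\ua,\nu)$ arise from Lemma \ref{IF} and Lemma \ref{BF} applied to $\ua$, with the Gauss--Bonnet cancellation of the level-set curvature integrals and the sign of the $\MS$-boundary contribution \eqref{I4} handled precisely as before; as in the proof of Theorem \ref{main}, the general decomposition $\Me=M_0\cup_i K_i\cup_j\Omega_j$ is treated componentwise and summed. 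I do not anticipate a genuine obstacle here: the only point requiring care is confirming that the constant $\vec c$ furnished by Lemma \ref{ND} can still be chosen so that the normal derivatives on the trapped components of $\MS$ have the required signs for this new $\ua$ — but Lemma \ref{ND} is stated for an arbitrary asymptotically linear spacetime harmonic function, so this is immediate. The result is exactly as displayed, completing the corollary.
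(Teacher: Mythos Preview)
Your proposal is correct and follows exactly the paper's own approach: the paper does not give a separate proof but simply remarks that the inequality is read off from the proof of Theorem \ref{main} (noting in particular the term $\pi_+(\Na u,\nu)$), and your account spells out precisely this, namely that replacing $x^1$ by $a^ix^i$ changes only the asymptotic flux computation so that $8\pi(E+P_1)$ becomes $8\pi(E+\la\va,P\ra)$.
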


\section{Proof of Corollary \ref{rigid}}\label{Embedding into Minkowski space}
We can follow Section 7 in \cite{HKK} with slight modifications to conclude the equality case. Some details are provided to explain how to deal with discontinuity of $k$ and $u$ being only $C^{1,\alpha}$ across $\S$. 
\subsection{$E=|P|$ case} 
Under the assumptions of Corollary \ref{rigid}, from the inequality of Theorem \ref{main}, if $E=|P|$, we have $\o \Na \o \Na u=\Na \Na u + |\Na u|k=0$.  Then, by Kato's inequality, we get 
$$|\Na |\Na u||\leq |\Na \Na u|\leq |k||\Na u|.$$
Therefore, by standard ODE technique (see Lemma \ref{l-independent-2} in the next section or Lemma 7.1 in \cite{HKK}), there exists a constant $C>0$ such that $|\Na u|\geq C$ on $M_0$. Since $u\in C^{1,\alpha}_{loc}$, $|\Na u|\geq C$ on $\S$.  Hence, use the same technique again within the remaining compact portions of $\Me$, we can conclude that $|\Na u|\geq \tilde C>0$ for some $\tilde C$ on $\Me$. This is inconsistent with the choice of normal derivatives of $u_{\vec{c}}$ on $\MS$ by Lemma \ref{ND}.  Hence, $\MS$ is empty. 

\

Let $\tau>>1$, $\S_\tau=\{u=\tau\}$ is an asymptotically flat complete plane and since $|\Na u|$ does not vanish, we can see along the level set flow, the topology does not change. Therefore, $M$ is diffeomorphic to $\R^2\times \R =\R^3$.  
Moreover, we can see that on each level set $\S_t$, $\frac{\Na \Na u}{|\Na u|}|_{T\S_t}+k|_{T\S_t}=h_t+k|_{T\S_t}=0$ (spacetime totally geodesic), where $h_t$ is the second fundamental form of $\S_t$ with respect to $\frac{\Na u}{|\Na u|}$.  $M$ is thus foliated by stable MOTS.  Then, by Theorem 1 (2) of \cite{Carlotto}, we know that each $\S_t$ has vanishing Gauss curvature and hence is isometric to $\R^2$.  Thus, together with asymptotic flatness, the metric can be expressed as $g(u,x^2,x^3)=\frac{1}{|\Na u|^2}du^2+\delta_{ij}dx^idx^j$.  

\subsection{Isometric embedding into Minkowski space for the case $E=|P|=0$.} 
As $\MS$ is empty, we now have $M=M_{0}\cup_{i=1}^l K_i$. Here, for notation simplicity, we denote $\cup_{i=1}^l K_i$ by $\tilde{K}$.  

For $M\cong \R^3$,  let \,\ $(x^1,x^2,x^3)$ be a global coordinate system which coincides with the asymptotically flat coordinate on $M\setminus\mathcal{C}$. And we can,  as in Section \ref{Spacetime harmonic coordinates}, construct a spacetime harmonic function $u(a_1,a_2,a_3)$ which is asymptotic to $a_i x^i$, where $\sum_{i=1}^3 (a_i)^2=1$. Then as in Theorem 7.3 in \cite{HKK}, we can define a lapse function $\alpha$ and a shift vector $\beta$ by
\be
\alpha=\left| \Na u \left( \frac{1}{\sqrt 2}, \frac{1}{\sqrt 2}, 0 \right) \right|+\left| \Na u \left( -\frac{1}{\sqrt 2}, 0,  \frac{1}{\sqrt 2} \right) \right|-\left| \Na u \left( 0, \frac{1}{\sqrt 2}, \frac{1}{\sqrt 2} \right) \right|,
\ee 
and
\be
\beta=\Na u \left( \frac{1}{\sqrt 2}, \frac{1}{\sqrt 2}, 0 \right) +\Na u \left( -\frac{1}{\sqrt 2}, 0, \frac{1}{\sqrt 2} \right) -\Na u \left( 0, \frac{1}{\sqrt 2}, \frac{1}{\sqrt 2} \right).
\ee
Then, we can define a stationary spacetime, $(\o M = \R \times M,  \o g)$ where 
\be
\o g=-(\alpha^2-|\beta|^2)dt^2 + 2\beta_i dx^idt +g,
\ee
where the Killing vector is 
\be
\p_t=\alpha \vec{n}+\beta,
\ee
where $\vec{n}$ is the unit normal to the hypersurface constant $t$-slice. We can see that $(M,g)$ is isometric to a constant time slice in $\o M$ under such construction. First, notice that $\alpha$ and $\beta$ are differentiable on $M\setminus \S$ and continuous across $\S$. From equations (7.9) to (7.11) in \cite{HKK}, it is shown that $\alpha^2-|\beta|^2$ is constant in $M\setminus \tilde{K}$ and $\tilde{K}\setminus\Sigma$ respectively. By continuity, we have $\alpha^2-|\beta|^2$ is a constant on $M$. Since $\alpha^2-|\beta|^2 \to 1$ as $r \to \infty$, we have  $\alpha^2-|\beta|^2\equiv 1$. we thus have, 
\be
\begin{split}
\o g=-dt^2 + 2\beta_i dx^idt +g=&-(dt-\beta_i dx^i)^2+(g_{ij}+\beta_i \beta_j)dx^idx^j\\
=&-(dt-d\Psi)^2+(g+d\Psi)^2, 
\end{split}
\ee 
where $\Psi= u \left( \frac{1}{\sqrt 2}, \frac{1}{\sqrt 2}, 0 \right) +u \left( -\frac{1}{\sqrt 2}, 0, \frac{1}{\sqrt 2} \right) - u \left( 0, \frac{1}{\sqrt 2}, \frac{1}{\sqrt 2} \right)$. Notice that $\beta$ is exact since $\beta=\Na \Psi$. Then, on $M\setminus \S$, we have, $a, b, c = 0, 1, 2, 3$, where $\p_0=\p_t$. 
$$\o\Gamma_{it}^{a}=\frac{1}{2}\o g^{ac}\left( \o g_{ic,t}+\o g_{tc,i}-\o g_{it,c} \right)=\o g^{ac}\left( \p_i \beta_c-\p_c \beta_i \right)=0.$$
On the other hand, since $E=|P|=0$, for all $|\vec{a}|=1$, we have $\Na \Na u (a_1, a_2, a_3)=-|\Na u (a_1,a_2,a_3)|k$, and hence
$$\Na_i \beta_j=-\alpha k_{ij}.$$

With these, we can show $k$ is the corresponding 2nd fundamental from of $M$ with respect to this embedding since on $M\setminus \Sigma$, 
$$\la \o \Na_i \vec{n}, \p_j \ra= \alpha^{-1}\la \o \Na_i(\p_t-\beta), \p_j\ra=\alpha^{-1}\o\Gamma_{it}^b \o g_{bj}+\alpha^{-1} \Na_i \beta_j=k_{ij}.$$ 
Therefore, $(M,g,k)$ arises as a constant time slice in $(\o M, \o g)$. 

\

For $l=1,\, 2$ and $3$, construct vector fields $X_l$ on $M$ as follows, 
$$X_l=\Na u_l+|\Na u_l|\vec{n},$$
where $$u_1=u(1,0,0), \,\ u_2=u(0,1,0), \,\  u_3=u(0,0,1),$$ i.e. the spacetime harmonic coordinates corresponding to the original asymptotically flat coordinates $(x^1, x^2, x^3)$. These vector fields are differentiable on $M\setminus \S$ and continuous across $\S$. 
Extend these vector fields trivially along $\p_t$ to $\o M$. Then, as shown in equations (7.13), (7.18) to (7.21) in \cite{HKK}, we know that on $\R\times (M\setminus \tilde{K})$ and $\R\times (\tilde{K}\setminus\Sigma)$, these vector fields and $\p_t$ are covariantly constant. And hence by continuity, the metric on these vector fields is constant on $\o M$.  
They are linearly independent at the asymptotic end and thus linearly independent on $\o M$.  Therefore, $(\o M, \o g)$ is flat. Further, by a change of coordinate, $\o t=t-\Psi(x)$ and $\o x=x$, we have 
$$\o g=-d\o t^2+(g+d\Psi^2).$$ 
From this construction, we can see $(M,g,k)$ can be expressed as a graph $\o t =-\Psi(\o x)$. Also note that $(\R^3, g+d\Psi^2)$ is asymptotically flat and therefore complete. Furthermore, it is a constant $\o t$ slice in this splitting of $\o g$ and hence it is flat and isometric to Euclidean space. Therefore, we have $\o M$ is isometric to Minkowski space. 

\section{Proof of Corollary \ref{regids}}\label{Raising Regularity}
In this section, we assume $\S$ is smooth. As in Corollary \ref{gen}, for $\va \in \Sp$, let $u_{\va}$ denote the spacetime harmonic function asymptotic to $a^ix^i$. Under the assumptions, and by Corollary \ref{rigid}, if $E=|P|$, then there exists a spacetime harmonic function $u=u_{\frac{-P}{|P|}}$ such that 
\be \label{tensorwise}
\Na \Na u= - |\Na u| k \text{\, on } M\setminus\S , 
\ee

\be 
|\Na u|\geq c>0  \text{\, on } M,
\ee
and 
\begin{equation}
\begin{split}
0\ge &\int_{M\setminus\S}\left(\frac{|\o \Na \o \Na u|^2}{|\nabla u|}+2(\mu|\nabla u|+ \la J, \Na u \ra) \right)\\&+2\int_\S(H_--H_+)|\nabla u|-2\int_\S (\pi_--\pi_+)(\Na u, \nu)\\
\ge &\int_{M\setminus\S}\left(\frac{|\o \Na \o \Na u|^2}{|\nabla u|}+2(\mu-|J|_g)|\nabla u| \right)\\&+2\int_\S \left( H_--H_+-|\omega_--\omega_+| \right ) |\nabla u| \\
\geq &0.\\
\end{split}
\end{equation} 
We thus have on $M\setminus\S$, 
\be
\begin{split}
\mu|\nabla u|+ \la J, \Na u \ra=\mu |\nabla u|-|J|_g ||\nabla u|=0,
\end{split}
\ee
and on $\Sigma$,
\be
\begin{split}
&(H_--H_+)|\nabla u|-2 (\pi_--\pi_+)(\Na u, \nu)\\
=&( H_--H_+-|\omega_--\omega_+| ) |\nabla u|\\
=&0.
\end{split}
\ee
Therefore, on $M\setminus\S$, 
\be\label{J}
J=-|J|_g\frac{\nabla u}{|\nabla u|}=-\mu\frac{\nabla u}{|\nabla u|}
\ee and on $\S$, 
\be\label{pi}
Z=-|Z|\frac{\nabla u}{|\nabla u|}=-(H_--H_+)\frac{\nabla u}{|\nabla u|},
\ee 
where $Z$ denotes the vector field dual to $(\pi_--\pi_+)(\cdot, \nu)$. 

\

\subsection{$E=|P|=0$}\label{EP0}
We first would see what  $E=|P|=0$ can imply. Note that if $E=|P|=0$, the relations mentioned above are not only satisfied by $u_{\frac{-P}{|P|}}$ but also by $\ua$ for all $\va\in \Sp$. 

In particular due to the non-vanishing gradient, we can define a vector field on $M$ by, 
$$
X_{\mathbf{a}}(p)=\frac{\nabla u_{\mathbf{a}}}{|\nabla u_{\mathbf{a}}|}(p)
$$
for $\va\in \Sp$, $p\in M$. Then the vector field is continuous on $M$, $C^{1,\alpha}$ outside $\Sigma$ and
$$
X_{\mathbf{a}}(p)\to  \mathbf{a}
$$
as $p\to\infty$. Fix $p\in M$, we can define a map
$$
F_p:\Sp\to \Sp(p)
$$ by
$$
F_p(\mathbf{a})=X_{\mathbf{a}}(p), 
$$
where $\Sp(p)$ is the unit sphere in $T_p(M)$.

Consider the following two key lemmas. 
 \begin{lma}\label{l-independent-1}
Let $X=\nabla u/|\nabla u|$  and let   $Y=\nabla \wt u/|\nabla \wt u|$ where $u$ and $\wt{u}$ are  spacetime harmonic functions, then on $M\setminus\Sigma$,
$$
|\nabla (|X-Y|^2)|\le 2|k||X-Y|^2.
$$

\end{lma}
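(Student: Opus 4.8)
\textbf{Proof proposal for Lemma \ref{l-independent-1}.}

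The plan is to compute $\nabla(|X-Y|^2)$ directly, using that both $X$ and $Y$ are unit vector fields obtained by normalizing gradients of spacetime harmonic functions, and that the spacetime Hessian relation $\bar\Na\bar\Na u(\cdot,\cdot) = \Na\Na u + |\Na u|k$ controls the ordinary Hessian of $u$ along $X$. First I would expand $|X-Y|^2 = 2 - 2\la X, Y\ra$, so that it suffices to bound $|\nabla \la X, Y\ra|$ appropriately; equivalently, working with $|X-Y|^2$ directly, for a unit vector $e$ tangent to $M$ away from $\Sigma$ one has
\be
\tfrac12 e(|X-Y|^2) = \la \Na_e X, X - Y\ra + \la \Na_e Y, Y - X\ra = -\la \Na_e X, Y\ra - \la \Na_e Y, X\ra,
\ee
using $\la \Na_e X, X\ra = \la \Na_e Y, Y\ra = 0$ since $X,Y$ are unit. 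So the whole estimate reduces to controlling the mixed terms $\la \Na_e X, Y\ra$ and $\la \Na_e Y, X\ra$.

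Next I would compute $\Na_e X$ where $X = \Na u/|\Na u|$. A standard computation gives
\be
\Na_e X = \frac{\Na_e \Na u}{|\Na u|} - \frac{\la \Na_e \Na u, \Na u\ra}{|\Na u|^3}\Na u = \frac{1}{|\Na u|}\big(\Na\Na u(e,\cdot)^\sharp - \la \Na\Na u(e,\cdot)^\sharp, X\ra X\big),
\ee
i.e. $\Na_e X$ is the projection of $\tfrac{1}{|\Na u|}\Na\Na u(e,\cdot)^\sharp$ orthogonal to $X$. The key input is that $u$ is spacetime harmonic only in the \emph{trace}, so $\Na\Na u$ is not zero in general; however, one does \emph{not} need the full Hessian to vanish. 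The trick (this is where the hypothesis that these are spacetime harmonic functions, as opposed to the equality-case functions, must be used carefully) is to pair against $Y$: when we form $\la \Na_e X, Y\ra$ we get a term $\tfrac{1}{|\Na u|}\Na\Na u(e, Y) - \tfrac{1}{|\Na u|}\la \Na\Na u(e,\cdot)^\sharp, X\ra\la X, Y\ra$. Here I would symmetrize over $e$ ranging in an orthonormal frame and contract against $X-Y$ to exploit the symmetry of $\Na\Na u$ together with the spacetime harmonic equation $\tr_g \Na\Na u = -|\Na u|\tr_g k$; the non-trace parts of the Hessian that survive get absorbed, and what remains after using $\bar\Na\bar\Na u = \Na\Na u + |\Na u|k$ on the null slice is precisely a term bounded by $|k||\Na u|$ in norm, so dividing by $|\Na u|$ gives the factor $|k|$. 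Combining the analogous term for $Y$ and recombining yields $|\nabla(|X-Y|^2)| \le 2|k||X-Y|^2$.

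The main obstacle I anticipate is handling the Hessian terms cleanly: unlike the rigidity situation (equation \eqref{tensorwise}), here $\Na\Na u$ is not pointwise equal to $-|\Na u|k$, so the bound cannot come from simply substituting. The correct mechanism is that $\la \Na_e X, X-Y\ra$, when $e$ is chosen appropriately (e.g. $e = (X-Y)/|X-Y|$, or summed over a frame), forces the relevant contraction of $\Na\Na u$ to be taken in directions where the spacetime harmonicity and the null condition combine to replace $\Na\Na u$ by $-|\Na u|k$ up to terms that are themselves controlled by $|X-Y|^2$ (because the difference $X-Y$ is what measures the failure). So the real work is an algebraic rearrangement showing that the "bad" Hessian contributions are proportional to $|X-Y|^2$ rather than being $O(|X-Y|)$; once that is in hand the differential inequality, and hence (by Grönwall / the ODE comparison of Lemma \ref{l-independent-2} or Lemma 7.1 of \cite{HKK}) the conclusion that $|X-Y|$ cannot vanish unless identically zero, follows routinely.
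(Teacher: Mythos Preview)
You have misread the hypothesis. Although the lemma is stated for ``spacetime harmonic functions,'' it sits inside Section~\ref{EP0}, where $E=|P|=0$ and consequently \emph{every} spacetime harmonic function $u_{\mathbf a}$ satisfies the full tensorwise relation \eqref{tensorwise}, namely $\nabla\nabla u=-|\nabla u|\,k$, not merely the trace equation $\Delta u+K|\nabla u|=0$. The paper uses this directly: from $\nabla\nabla u=-|\nabla u|\,k$ one computes
\[
\nabla_i X^j=-k_i^{\ j}+k_{mi}X^m X^j,
\]
and similarly for $Y$; substituting into $\nabla(|X-Y|^2)=-2\la\nabla X,Y\ra-2\la X,\nabla Y\ra$ gives $\nabla(|X-Y|^2)=|X-Y|^2\,k(X+Y,\cdot)$, whence the bound follows from $|X+Y|\le 2$.

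Your proposed route, using only the trace equation, cannot succeed. The ``algebraic rearrangement'' you describe --- forcing the relevant contractions of $\nabla\nabla u$ to collapse to $-|\nabla u|k$ via spacetime harmonicity --- does not exist: the trace condition simply does not control the individual components of the Hessian. Indeed the stated inequality is false under the weaker hypothesis. Take $k\equiv 0$ on $\mathbb R^3$ so that spacetime harmonic means harmonic, and set $u=x^1$, $\widetilde u=x^1+\varepsilon x^1x^2$. Then $X=(1,0,0)$ is constant while $Y=\nabla\widetilde u/|\nabla\widetilde u|$ is not, so $|X-Y|^2$ is nonconstant; but the right-hand side $2|k|\,|X-Y|^2$ vanishes identically. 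Thus the obstacle you anticipated is not a technical difficulty to be overcome but a genuine counterexample: the lemma needs \eqref{tensorwise} as input, and once you grant it the proof is a two-line computation.
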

\begin{proof}
\be 
\begin{split}
\Na X=&\Na (\frac{\Na u}{|\Na u|})\\
=&\frac{\Na \Na u}{|\Na u|}-\frac{1}{|\Na u|^2}\frac{\Na \Na u( \Na u, \cdot)}{|\Na u|}\Na u\\
=&-k+k(X,\cdot)X,
\end{split}
\ee
i.e. in local coordinates, $\Na_i X^j=-k_i^j+k_{mi}X^mX^j$.
Similarly, 
\be 
\begin{split}
\Na Y=-k+k(Y,\cdot)Y.
\end{split}
\ee
Hence, 
\be 
\begin{split}
\Na (|X-Y|^2)=&\Na (|X|^2+|Y|^2-2\la X, Y\ra) \\
=&-2\la \Na X, Y \ra - 2\la X, \Na Y \ra  \\
=&2(k(Y,\cdot)-k(X,\cdot)\la X, Y\ra+k(X,\cdot)-k(Y,\cdot)\la X, Y\ra)\\
=&2(1-\la X, Y \ra)k(X+Y,\cdot)\\
=&|X-Y|^2k(X+Y,\cdot).
\end{split}
\ee
And
\be 
\begin{split}
|\Na (|X-Y|^2)|
\leq &|X-Y|^2 |k|(|X|+|Y|)\\
\leq& 2|X-Y|^2||k|.
\end{split}
\ee
\end{proof}

\begin{lma}\label{l-independent-2}
If $E=|P|=0$, $F_p$ is a homeomorphism for all $p\in M$.
\end{lma}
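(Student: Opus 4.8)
The plan is to show $F_p$ is a continuous bijection from $\Sp$ to $\Sp(p)$, and then invoke compactness of $\Sp$ together with Hausdorffness of $\Sp(p)$ to upgrade this to a homeomorphism. Continuity of $F_p$ follows from the continuous dependence of $u_{\va}$ on $\va$ established earlier (the map $\Psi$ is $C^1$, hence $\va \mapsto u_{\va}$ is continuous in $C^{1,\alpha}_{loc}$, and evaluation of the normalized gradient $\Na u_{\va}/|\Na u_{\va}|$ at a fixed $p$ is continuous since $|\Na u_{\va}|\geq c>0$ uniformly). The real content is injectivity and surjectivity.

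For injectivity, I would fix $p$ and suppose $F_p(\va)=F_p(\mathbf{b})$ for $\va\neq\mathbf{b}$, i.e. $X_{\va}(p)=X_{\mathbf{b}}(p)$. Set $h:=|X_{\va}-X_{\mathbf{b}}|^2$. By Lemma \ref{l-independent-1}, on $M\setminus\S$ we have $|\Na h|\leq 2|k|h$, and by continuity of $X_{\va}, X_{\mathbf{b}}$ across $\S$ together with local boundedness of $|k|$, a standard ODE/Gronwall argument along paths (integrating $|\Na \log h|\leq 2|k|$ where $h>0$) shows that if $h$ vanishes at one point it vanishes identically — equivalently, $\log h$ cannot drop to $-\infty$ along a path of finite length unless it is identically $-\infty$. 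Hence $X_{\va}\equiv X_{\mathbf{b}}$ on all of $M$; letting $p\to\infty$ and using $X_{\va}\to\va$, $X_{\mathbf{b}}\to\mathbf{b}$ forces $\va=\mathbf{b}$, a contradiction. This is essentially the argument behind Lemma 7.1 in \cite{HKK}, adapted to the lower regularity by noting all quantities involved are continuous across $\S$ and the differential inequality holds on the full-measure open set $M\setminus\S$.

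For surjectivity I would use a degree-theoretic argument. The map $F_p:\Sp\to\Sp(p)$ is continuous, and I would like to say it has degree $\pm 1$ and is therefore onto. To compute the degree, I would exploit the asymptotic normalization: $F_p$ is homotopic to the identity on $\Sp$ through a family of maps obtained by "pushing $p$ to infinity," more precisely by considering the family of vector fields $\va\mapsto X_{\va}(p_s)$ along a path $p_s\to\infty$ and using that $X_{\va}(p_s)\to\va$ uniformly in $\va$ (uniformity again coming from the uniform lower bound on $|\Na u_{\va}|$ and uniform $C^{1,\alpha}_{loc}$ bounds). Since a map uniformly close to the identity on $\Sp$ is homotopic to it, $\deg F_p=\deg(\mathrm{id})=1$, so $F_p$ is surjective. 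A continuous bijection between the compact Hausdorff spaces $\Sp$ and $\Sp(p)$ is automatically a homeomorphism, completing the proof.

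The main obstacle I expect is the surjectivity step — specifically making the homotopy to the identity rigorous. One must ensure the spacetime harmonic functions $u_{\va}$ and their gradients vary continuously not just in $\va$ but jointly with the basepoint, and that the convergence $X_{\va}(p)\to\va$ as $p\to\infty$ is uniform over $\va\in\Sp$; this requires the asymptotics $u_{\va}=a^ix^i+O^2(|x|^{1-q})$ to be uniform in $\va$, which should follow from the uniform barriers in the existence proof but needs to be checked. An alternative to the degree argument that sidesteps this is: injectivity plus invariance of domain shows $F_p$ is an open map, so $F_p(\Sp)$ is open in $\Sp(p)$; it is also compact hence closed; since $\Sp(p)$ is connected, $F_p(\Sp)=\Sp(p)$. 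This is cleaner and avoids homotopy bookkeeping, so I would present it that way.
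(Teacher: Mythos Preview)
Your proposal is correct, and once you settle on invariance of domain (as you do at the end), the overall structure matches the paper's. The one substantive difference is that the paper does not argue continuity and injectivity separately: it integrates the differential inequality of Lemma~\ref{l-independent-1} along a curve $\gamma$ from $p$ out to infinity (a straight line in the asymptotic chart), uses the decay $|k|(\gamma(t))\le C(1+t)^{-1-q}$, and obtains a two-sided Gronwall estimate
\[
|X_{\va}-X_{\mathbf b}|^2(p)\,e^{-C}\;\le\;|\va-\mathbf b|^2\;\le\;|X_{\va}-X_{\mathbf b}|^2(p)\,e^{C},
\]
which yields bi-Lipschitz continuity and injectivity of $F_p$ in one stroke. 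This sidesteps any appeal to continuous dependence of $u_{\va}$ on $\va$; note incidentally that the $C^1$ map $\Psi$ you cite records dependence on the boundary constants $\vec c$, not on the asymptotic direction $\va$, so while an analogous argument would work, it is not what is written. Your degree-theoretic detour is unnecessary, as you yourself recognise; invariance of domain plus connectedness of $\Sp(p)$ is exactly what the paper invokes.
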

\begin{proof} By the invariance of domain, it suffices to show that $F_p$ is injective and continuous. Recall the notation of decomposition of $M$ as in the proof of Corollary \ref{rigid}, $M=M_0\cup \tilde{K}$. Let $p\in M_0\setminus\Sigma$ and $\gamma:[0,\infty)\to M_0\setminus\Sigma$ be a smooth curve so that
$\gamma(0)=p$ and $\gamma$ is a straight line near infinity. For any $\mathbf{a}, \mathbf{b}\in \Sp$. By Lemma \ref{l-independent-1}, outside $\Sigma$,
$$
|\nabla (|X_\mathbf{a}-X_\mathbf{b}|^2)|\le 2|k||X_\mathbf{a}-X_\mathbf{b}|^2.
$$
Define
$$
f(t)=|X_\mathbf{a}-X_\mathbf{b}|^2(\gamma(t)),
$$
then by the decay rate of $k$,
$$
|f'(t)|\le C_1|k|(\gamma(t))f(t)\le C_2 (1+t)^{-1-q} f(t).
$$
for some constants $C_1, C_2$ independent of $\mathbf{a}, \mathbf{b}$ and $t$. Hence we have for all $t$,
$$
f(0)e^{ -\int_0^t C_2 (1+s)^{-1-q}ds}\le f(t)\le f(0)e^{ \int_0^t C_2 (1+s)^{-1-q}ds}.
$$
Let $t\to\infty$, we have
$$
|X_\mathbf{a}-X_\mathbf{b}|^2(p)e^{-\int_0^\infty C_2 (1+s)^{-1-q}ds}\le |\mathbf{a}-\mathbf{b}|\le |X_\mathbf{a}-X_\mathbf{b}|^2(p)e^{ \int_0^\infty C_2 (1+s)^{-1-q}ds}.
$$
Since the integral
$$\int_0^\infty (1+s)^{-1-q} ds $$
converges. By continuity of $X_{\va}-X_{\mathbf{b}}$, the above also holds on $\S$. Since $\tilde{K}$ is compact, we can apply the same argument. Therefore, the lemma follows.

\end{proof}

\begin{cor}\label{c-independent-1}
If $E=|P|=0$, then for any $p\in M$, there exist $\mathbf{a_1}, \mathbf{a_2}, \mathbf{a_3}$ in $\Sp$ such that
$X_{\mathbf{a_1}}, X_{\mathbf{a_2}}, X_{\mathbf{a_3}}$ are linearly independent at $p$. In fact, they can be chosen to be orthonormal.
\end{cor}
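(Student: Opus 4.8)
The plan is to obtain Corollary \ref{c-independent-1} as an essentially formal consequence of Lemma \ref{l-independent-2}. First I would record that, under the hypothesis $E=|P|=0$, the relations derived just before and after \eqref{pi} hold for \emph{every} $\mathbf{a}\in\Sp$; in particular $|\nabla u_{\mathbf{a}}|\geq c>0$ on all of $M$ for each such $\mathbf{a}$, so the vector field $X_{\mathbf{a}}=\nabla u_{\mathbf{a}}/|\nabla u_{\mathbf{a}}|$ and the evaluation map $F_p(\mathbf{a})=X_{\mathbf{a}}(p)$ are genuinely defined at every $p\in M$. This is the only point requiring a little care, and it has already been arranged in the discussion preceding the lemma.

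Next I would fix $p\in M$ and any orthonormal basis $\{e_1,e_2,e_3\}$ of $(T_pM,g_p)$; each $e_i$ lies on the unit sphere $\Sp(p)\subset T_pM$. By Lemma \ref{l-independent-2} the map $F_p:\Sp\to\Sp(p)$ is a homeomorphism, hence in particular surjective, so there exist (unique) vectors $\mathbf{a_1},\mathbf{a_2},\mathbf{a_3}\in\Sp$ with $X_{\mathbf{a_i}}(p)=F_p(\mathbf{a_i})=e_i$ for $i=1,2,3$. Then $X_{\mathbf{a_1}}(p),X_{\mathbf{a_2}}(p),X_{\mathbf{a_3}}(p)$ form an orthonormal — and therefore linearly independent — triple in $T_pM$, which is exactly the assertion, including the final sentence about orthonormality.

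There is no real obstacle remaining at this stage: the substantive work — the pointwise gradient comparison of Lemma \ref{l-independent-1} and the invariance-of-domain argument upgrading injectivity plus continuity of $F_p$ to a homeomorphism in Lemma \ref{l-independent-2} — has already been carried out, so the corollary follows simply by pulling back a chosen orthonormal frame at $p$. If one preferred not to invoke surjectivity of $F_p$ directly, one could instead observe that $F_p(\Sp)$ is a nonempty subset of $\Sp(p)$ that is open (invariance of domain) and closed (compactness of $\Sp$) in the connected manifold $\Sp(p)$, hence all of $\Sp(p)$, and then conclude as above.
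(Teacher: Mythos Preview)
Your proposal is correct and is precisely the intended argument: the paper states the corollary without proof immediately after Lemma \ref{l-independent-2}, and the implicit reasoning is exactly that surjectivity of the homeomorphism $F_p$ lets one pull back any orthonormal frame of $T_pM$.
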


\begin{cor}\label{muJpiHk}
If $E=|P|=0$, then $\mu=|J|=0$ on $M\setminus\Sigma$, $\pi_-(\cdot,\nu)=\pi_+(\cdot,\nu)$, $H_-=H_+$ and $tr_{\S}k_-=tr_{\S}k_+$ on $\Sigma$. 
\end{cor}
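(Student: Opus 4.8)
The plan is to confront the relations \eqref{J} and \eqref{pi} — which, by the observation recorded at the start of Section~\ref{EP0}, hold in the case $E=|P|=0$ for the function $\ua$ attached to \emph{every} $\va\in\Sp$, not merely for $u_{-P/|P|}$ — with the supply of linearly independent gradient directions furnished by Corollary~\ref{c-independent-1}. Recall that $|\Na \ua|\ge c_0>0$ on $M$, so each $X_\va:=\Na \ua/|\Na \ua|$ is a genuine unit vector field, and that for any $p\in M$ one may choose $\va_1,\va_2,\va_3\in\Sp$ for which $X_{\va_1}(p),X_{\va_2}(p),X_{\va_3}(p)$ form an orthonormal basis of $T_pM$.

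For the interior assertion, I would fix $p\in M\setminus\Sigma$ and read \eqref{J} as $J(p)=-\mu(p)\,X_{\va_i}(p)$ for $i=1,2,3$. Were $\mu(p)\neq0$, the three vectors $X_{\va_i}(p)$ would all coincide with $-J(p)/\mu(p)$, contradicting their linear independence; hence $\mu(p)=0$ and then $J(p)=0$. As $p$ ranges over $M\setminus\Sigma$ this yields $\mu=|J|=0$ there.

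The corner assertions follow from the same mechanism applied to \eqref{pi}. Fixing $p\in\Sigma$, \eqref{pi} reads $Z(p)=-(H_--H_+)(p)\,X_{\va_i}(p)$, $i=1,2,3$, where $Z$ is the vector dual to $(\pi_--\pi_+)(\cdot,\nu)$; linear independence again forces $(H_--H_+)(p)=0$ and therefore $Z(p)=0$, i.e.\ $(\pi_--\pi_+)(\cdot,\nu)$ vanishes at $p$. In particular $\pi_-(\cdot,\nu)=\pi_+(\cdot,\nu)$ on $\Sigma$; and, to be safe about the normal direction, since $H_-=H_+$ the corner identity $(H_--H_+)|\Na \ua|-2(\pi_--\pi_+)(\Na \ua,\nu)=0$ now gives $(\pi_--\pi_+)(\Na \ua,\nu)=0$ for every $\va$, so that $(\pi_--\pi_+)(\cdot,\nu)=0$ as a $1$-form on all of $T_pM$ because the $X_{\va_i}(p)$ span it. Evaluating this on $\nu$ and using the pointwise identity $\pi(\nu,\nu)=k(\nu,\nu)-\tr_g k=-\tr_\Sigma k$ gives $\tr_\Sigma k_-=\tr_\Sigma k_+$ on $\Sigma$.

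No analytic difficulty remains: all the real work lives in the gradient-comparison lemmas~\ref{l-independent-1}--\ref{l-independent-2} and Corollary~\ref{c-independent-1}. The single point one must be careful to state is the clause ``for every $\va\in\Sp$'' — that under $E=|P|=0$ the equality case of Theorem~\ref{main} is realized by $\ua$ for each direction $\va$, so that \eqref{J}, \eqref{pi} and the corner identity are available for a full sphere of directions rather than a single one; this is precisely the remark opening Section~\ref{EP0}.
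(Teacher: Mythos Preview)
Your proof is correct and follows essentially the same route as the paper: confront \eqref{J} and \eqref{pi}, which under $E=|P|=0$ hold for every $\va\in\Sp$, with the supply of independent gradient directions from Corollary~\ref{c-independent-1}, and then read off $\tr_\Sigma k_-=\tr_\Sigma k_+$ from $\pi_\pm(\nu,\nu)=-\tr_\Sigma k_\pm$. The only cosmetic difference is that the paper phrases it via the full surjectivity of $F_p$ (so $J=-|J|_g\vec v$ for \emph{all} unit $\vec v$), whereas you use just three independent directions --- either formulation forces the coefficient to vanish, and your ``to be safe'' paragraph is harmless but unnecessary, since $Z=0$ already means $(\pi_--\pi_+)(\cdot,\nu)=0$ as a $1$-form.
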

\begin{proof}
By Corollary \ref{c-independent-1}, \eqref{J} and \eqref{pi}, we can see that $J=-|J|_g \vec{v}$ for all $\vec{v}\in \Sp(p)$ for all $p \in M\setminus\Sigma$ and $Z=-|Z|\vec{w}$ on $\Sigma$ for all $\vec{w}\in\Sp(q)$ for all $q \in \Sigma$. And by considering $tr_{\S}k_{\pm}=\pi_{\pm}(\nu,\nu)$, the result thus follows. 
\end{proof} 
\subsection{$E=|P|=0$ and conditions on $k$}
After the preparation in previous sections, we are now ready to show that the regularity of $(g,k)$ can be improved by further assuming $k$ is continuous and the normal derivative of $tr_g k$ is continuous on $\Sigma$. 
 
\

We would notate a spacetime harmonic function by $u$ and by $\ua$ if its dependence on $\va\in\Sp$ has to be emphasised. As discussed in Section \ref{Preliminaries}, we can assume near $\Sigma$, $M$ is of the form $\Sigma\times(-\e,\e)$. Hence $g$ can be written in the form
$$
g(x,t)=g_+(x,t)=dt^2+h_+(x,t) \text{\, if } t\geq0;
$$
$$
g(x,t)= g_-(x,t)=dt^2+h_-(x,t) \text{\, if } t\leq0,
$$
where $h_\pm$ is the metric induced by $g$ on $\Sigma\times \{t\}$. In the following, $ A, B$ etc. are indices for tensors on $\Sigma$, $1\le A, B\le 2$. While $\p_t$ is denoted by the index 3 and $i, j$ etc. are from 1 to 3.

{\bf 1.} \underline{$g$ is $C^{1,1}$}: 
To prove $g$ is $C^1$, it suffices to show the second fundamental forms from both sides match. It was shown in Section \ref{Spacetime harmonic coordinates} that $u$ is $C^{2}$ away from $\Sigma$ and $u|_{\Sigma}$ is $C^2$ on $\Sigma$. Therefore, from both sides, $u$ is $C^2$ up to $\Sigma$. By \eqref{tensorwise} and continuity from both sides. We have on $\Sigma$, 
$$\Na^-_A\Na^-_Bu=(k_{-})_{AB}|\Na u| \text{\, and\, } \Na^+_A\Na^+_Bu=(k_{+})_{AB}|\Na u|.$$ 
Since $k_-$ and $k_+$ induce the same tensor on $\Sigma$,   
$$\Na^-_A\Na^-_Bu=\Na^+_A\Na^+_Bu.$$ 
Then we can follow the argument as in \cite{HMT} Section 3. Observe that
\be
\nas_A \nas_B u + II_+(\partial_A, \partial_B)\partial_{\nu}u=\nas_A \nas_B u + II_-(\partial_A, \partial_B)\partial_{\nu}u, 
\ee
where $II_{\pm}$ stands for the second fundamental form for $g_{\pm}$ with respect to $\partial_t$. Now,  since $g_{\pm}$ induce the same smooth metric on $\Sigma$,  
\be
II_+(\partial_A, \partial_B)\partial_{t}u= II_-(\partial_A, \partial_B)\partial_{t}u.
\ee 
By Corollary \ref{c-independent-1}, for each $p\in \Sigma$, we can find $\ua$ such that $\partial_{t}\ua(p)$ is non zero. We therefore can conclude that the second fundamental forms match on $\Sigma$. We can see that $g$ is $C^1$. Since the second derivatives of $g_\pm$ are bounded, so $g$ is $C^{1,1}$.

{\bf 2.} \underline{$\ua$ is $C^{2,\a}$ for all $\mathbf{a}$}. Now $g$ is $C^{1,1}$ since $k$ is assumed to be Lipschitz.  Hence,  $\Delta \ua=-(\tr_g k)|\Na \ua|$ implies $\ua\in C^{2,\alpha}_{loc}(\Me)$ by standard elliptic regularity theory.  

{\bf 3.} \underline{$k$ is $C^{1,1}$ under the assumption that $tr_g k$ is $C^1$} Now, we are also going to make use of the assumption that the normal derivative of $tr_g k$ is continuous. Together with $k$ being continuous, we can say $tr_g k$ is $C^1$. 
From this onward, $F_{,i}$ means partial derivative and $F_{;i}$ means covariant derivative.
We first prove that $\pi$ is $C^1$. Since on $\Sigma$, $\pi^+=\pi^-$, we have
$$
\pi^+_{ij,A}=\pi^-_{ij,A}.
$$

At a point on $\Sigma$, w.l.o.g., assume $\p_1, \p_2$ are orthonormal. By Corollary \ref{muJpiHk} we have for each $i$, 
$$
J_i=0=\pi_{ij;j}^\pm =\pi_{i1;1}^\pm+\pi_{i2,2}^\pm+\pi_{i3;3}^\pm
$$
Now
$$
\pi_{i1;1}^+=\pi_{i1,1}^+-\Gamma_{1i}^k\pi_{k1}^+-\Gamma_{11}^k\pi_{ik}^+,
$$
$$
\pi_{i1;1}^-=\pi_{i1,1}^--\gamma_{1i}^k\pi_{k1}^--\gamma_{11}^k\pi_{ik}^-,
$$
where $\Gamma, \gamma$ are connections of $g_+, g_-$ respectively. Since $g$ is $C^1$, we have on $\Sigma$, 
$$
\pi_{i1;1}^+=\pi_{i1;1}^-.
$$
Similarly, $$
\pi_{i2;2}^+=\pi_{i2;2}^-
$$
and so

$$
\pi_{i3;3}^+=\pi_{i3;3}^-
$$
on $\Sigma$. This implies that on $\Sigma$
$$
\pi_{i3,3}^+=\pi_{i3,3}^-. 
$$

It remains to prove that
$$
\pi_{AB,3}^+=\pi_{AB,3}^-.
$$
On $\Sigma$, since 
$
u_{;AB}=-k_{AB}|\nabla u|,
$
we have
\be
\begin{split}
u_{;A3B}^+=&\p_{B}(u_{;A3}^+)-\Gamma_{BA}^ku_{k3}^+-\Gamma_{B3}^ku_{Ak}^+\\
=&\p_{B}(k^+_{A3}|\Na u|)-\Gamma_{BA}^ku_{k3}^+-\Gamma_{B3}^ku_{Ak}^+.
\end{split}
\ee
\be
\begin{split}
u_{;A3B}^-=&\p_{B}(u_{;A3}^-)-\gamma_{BA}^ku_{k3}^+-\gamma_{B3}^ku_{Ak}^-\\
=&\p_{B}(k^-_{A3}|\Na u|)-\gamma_{BA}^ku_{k3}^+-\gamma_{B3}^ku_{Ak}^-.
\end{split}
\ee
Since $g$ is $C^1$, $u$ is $C^2$ and $k$ is continuous, we have on $\Sigma$, 
\be\label{u3coder}
u_{;A3B}^+=u_{;A3B}^-.
\ee
Moreover, 
$$
u_{;AB3}^\pm=-k_{AB}^\pm (|\nabla u|)_{;3}-|\nabla u|k_{AB;3}^\pm.
$$

Now consider, 
\bee
\begin{split}
|\nabla u|k_{AB;3}^+-|\nabla u|k_{AB;3}^-=u_{;AB3}^--u_{;AB3}^+.
\end{split}
\eee

We claim that for each $p\in \S$, one can find $\mathbf{a}\in \Sp$ such that the right hand side is zero for  $u=u_{\mathbf{a}}$. If the claim is true, then $k_{AB,3}^+=k_{AB,3}^-$ by the fact that $\Na u$ is nowhere vanishing. To prove the claim, by the Ricci identity, 
\bee
\left\{
  \begin{array}{ll}
   u_{;AB3}^+=&u_{;A3B}^++R_{iA3B}^+u^i; \\
   u_{;AB3}^-=& u_{;A3B}^-+R_{iA3B}^-u^i
  \end{array}
\right.
\eee
By Lemma \ref{l-independent-2}, we can find $\mathbf{a}$ so that $X_{\mathbf{a}}=\p_A$. For this particular $u$, we have
$$
u_{;AB3}^+= u_{;A3B}^+;\  u_{;AB3}^-= u_{;A3B}^-.
$$
Hence, the claim is true by \eqref{u3coder}, $k_{AB;3}^+=k_{AB;3}^-$ and $k_{AB,3}^+=k_{AB,3}^-$.
To summarise, we have shown that
\bee
\left\{
  \begin{array}{ll}
    \pi_{ij,A}^+=\pi_{ij,A}^- \\
     \pi_{i3,3}^+=\pi_{i3,3}^-\\
     k_{AB,3}^+=k_{AB,3}^-.
  \end{array}
\right.
\eee
 Since $\pi=k-(\tr_g k)g$, we have
  $$
 \pi_{12;3}^\pm=k_{12;3}^\pm
 $$
 $$
 \pi_{AA;3}^\pm=k_{AA;3}^\pm-(\tr_g k_\pm)_{;3}
 $$

Now by the assumption that normal derivative of $tr_g k$ is continuous,  we know that $\pi_{AA;3}^+=\pi_{AA;3}^-$. As $g$ is $C^1$, we know that $\pi$ is $C^1$ hence $C^{1,1}$ as the connection matches. Therefore, $k$ is also $C^{1,1}$.

 {\bf 4.} \underline{Curvature tensor is continuous}
 Let $\mathbf{a}\in \Sp$ and denote $X=X_{\mathbf{a}}$ as in Section \ref{EP0}. We thus have
 
  \bee
  \begin{split}
  X^i_{;j}= -g^{is}k_{sj}+X^iX^sk_{sj}.
  \end{split}
  \eee
  and so
  $$
  X_{i;j}=-k_{ij}+X_iX^sk_{sj}.
  $$
  Hence
  \bee
  \begin{split}
  X_{i;jm}=(-k_{ij}+X_{i}X^sk_{sj})_{;m}
  \end{split}
  \eee
  Since $u$ is $C^2$, $k$ is $C^1$ and $g$ is $C^1$, so $X_{i;jm}$ is $C^0$. On the other hand, by the Ricci identity
  $$
  X_{i;jm}-X_{i;mj}=-R^l_{ijm}X_l=g^{lp}R_{pijm}X_l=-R_{pijm}X^p.
  $$
Hence the right hand side is also continuous. On the other hand, by Corollary \ref{c-independent-1}, locally we can always find linearly independent continuous vector fields $X, \wt X, \ol X$ so that the above are true. Hence for each $i,j,m$, $R_{pijm}$ is in $C^0$. Hence $Rm$ is continuous.
  
  {\bf 5.} \underline{$g$ is $C^{2,1}$} As in Lemma 4.1 in \cite{ST}, it remains to check that
  $$
  \frac{\p^2}{\p t^2}h_{AB}
  $$
  is continuous. Now 
  \bee
  \begin{split}
   \frac{\p^2}{\p t^2}h_{AB}=&-2\frac{\p}{\p t}\la \p_t,\nabla_{\p_A}\p_B\ra\\
   =&-2\la \nabla_{\p_t} \p_t ,\nabla_{\p_A}\p_B\ra-2\la  \p_t ,\nabla_{ \p_t}\nabla_{\p_A}\p_B\ra\\
   =&-2\la  \p_t ,\nabla_{ \p_t}\nabla_{\p_A}\p_B\ra  \text{\ \ (as $\nabla_{\p_t}\p_t=0$)}\\
   =&-2\la \p_t,\nabla_{\p_A}\nabla_{\p_t}\p_B\ra-2\la\p_t,R(\p_t,\p_A)\p_B\ra\\
   =&-2\p_A\la \p_t,\nabla_{\p_t}\p_B\ra+2 \la \nabla_{\p_A}\p_t, \nabla_{\p_t}\p_B\ra
   -2\la\p_t,R(\p_t,\p_A)\p_B\ra\\
   =&2 \la \nabla_{\p_A}\p_t, \nabla_{\p_t}\p_B\ra
   -2\la\p_t,R(\p_t,\p_A)\p_B\ra \text{\ \ (as $|\p_t|=1$)}.
  \end{split}
  \eee
The first term is the square of the second fundamental form while curvature tensor was shown to be continuous above. This proves that $g$ is $C^2$, hence $C^{2,1}$. Moreover, $u$ is $C^{3,\alpha}_{loc}$ by elliptic regularity (\cite{GT} Theorem 9.19) since $g\in C^{2,1}_{loc}$, $k\in C^{1,1}_{loc}$ and $|\Na u|\in C^{1,\alpha}_{loc}$.  Corollary \ref{regids} is therefore proved.  

\

On the other hand, by \cite{Carlotto}, we can conclude the following. 
\begin{prop}
Let $(M,g,k)$ be an initial data set as in Theorem \ref{main}.  Assume that the dominant energy condition holds on $\Me\setminus \S$ and $$(H_--H_+)-|\omega_--\omega_+|\geq0$$ on $\S$. 
Further assume $(g,k)$ has (boosted) harmonic asymptotics (\cite{Carlotto} Definition 2.3, \cite{EHLS} Definition 4), then $E=|P|$ implies $E=|P|=0$. 
\end{prop}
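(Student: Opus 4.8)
The plan is to argue by contradiction. Since Theorem~\ref{main} already gives $E\ge |P|\ge 0$, and $|P|=0$ together with $E=|P|$ forces $E=|P|=0$ at once, it suffices to rule out the case $|P|>0$, $E=|P|$; assume this from now on.

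First I would extract the rigidity structure of the equality case exactly as in the proof of Corollary~\ref{rigid}. Taking $u:=u_{-P/|P|}$, the spacetime harmonic function asymptotic to $-\tfrac{P_i}{|P|}x^i$, equality in \eqref{ADMLB} forces $\o\Na\o\Na u\equiv 0$ on $M\setminus\S$, hence $\Na\Na u=-|\Na u|\,k$ there; moreover $\MS=\varnothing$, $M$ has a single end, and (by Kato's inequality together with the ODE comparison of Lemma~\ref{l-independent-2}, using that harmonic asymptotics makes $|k|=O(|x|^{-2})$ integrable along rays) $|\Na u|\to 1$ at infinity and $|\Na u|\ge c>0$ on $M$. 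This is the only information the level--set machinery contributes.

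The core of the argument is then to confront $\Na\Na u=-|\Na u|k$ with the precise asymptotics supplied by \cite{Carlotto}. Consider first plain (unboosted) harmonic asymptotics: near infinity $g=U^{4}\delta$ with $U=1+\tfrac{E/2}{|x|}+O_{2}(|x|^{-1-\eps})$, and $\pi$ is a fixed divergence expression in a covector field $V$ with $V_i=\tfrac{-2P_i}{|x|}+O_{2}(|x|^{-1-\eps})$ (up to the precise normalization of \cite{Carlotto} Definition~2.3). Applying elliptic regularity to $\Lp u=-(\tr_g k)|\Na u|$ and the refined decay coming from ``harmonic'' (rather than merely ``fast decay'') asymptotics, the remainder $u+\tfrac{P_i}{|P|}x^i$ satisfies $\p^{2}\big(u+\tfrac{P_i}{|P|}x^i\big)=o(|x|^{-2})$, so $\p^{2}u=o(|x|^{-2})$. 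Writing $\alpha:=-P/|P|$, one then computes that the leading $|x|^{-2}$ part of $\Na\Na u=\p^{2}u-\Gamma\cdot\p u$ is $-\Gamma^{k}_{ij}\alpha_{k}$ with $\Gamma^{k}_{ij}=-\tfrac{E}{|x|^{3}}\big(x_i\delta^{k}_{j}+x_j\delta^{k}_{i}-x_k\delta_{ij}\big)+o(|x|^{-2})$, while the leading part of $-|\Na u|k_{ij}$ is $\tfrac{2|P|}{|x|^{3}}\big(\alpha_ix_j+\alpha_jx_i-\tfrac12(\alpha\cdot x)\delta_{ij}\big)+o(|x|^{-2})$. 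Equating these for every $x$ yields the pointwise identity
\be
(E-2|P|)\big(\alpha_ix_j+\alpha_jx_i\big)=\big(E-|P|\big)(\alpha\cdot x)\,\delta_{ij}\qquad\text{for all }x\in\R^{3},
\ee
and since $\alpha_ix_j+\alpha_jx_i$ and $(\alpha\cdot x)\delta_{ij}$ are linearly independent tensor--valued functions of $x$ (evaluate at $x=\alpha$ and compare the rank--$1$ and rank--$3$ parts), this forces $E=2|P|$ \emph{and} $E=|P|$, hence $|P|=0$ — contradicting $|P|>0$. Thus $E=|P|=0$.

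Finally, the boosted case is handled either by rerunning this computation after applying the Lorentz boost built into \cite{Carlotto} Definition~2.3, so that the $E$-- and $P$--carrying coefficients get mixed by the boost while the over--determined system still collapses; or, more economically, by observing that ``$(E,P)$ null'' is a boost--invariant property, so that applying the boost and invoking the unboosted conclusion for the boosted data (whose asymptotic flatness, corner condition and dominant energy condition are preserved by the construction, as in \cite{Carlotto}, \cite{EHLS}) gives $(E',P')=0$ and hence $(E,P)=0$. The step I expect to be the main obstacle is precisely this asymptotic bookkeeping: verifying that the spacetime harmonic function's expansion is regular enough that $\p^{2}u=o(|x|^{-2})$ (in particular the absence of a logarithmic term, which is what harmonic asymptotics is designed to kill — and, should such a term be present, checking that the resulting extra parameter is also eliminated by the over--determination), and carrying the Lorentz boost through the expansion. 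By contrast, the equality--case rigidity $\Na\Na u=-|\Na u|k$ and the lower bound $|\Na u|\ge c>0$ are already available from Corollary~\ref{rigid} and Lemma~\ref{l-independent-2}.
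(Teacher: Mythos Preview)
Your approach is genuinely different from the paper's. The paper does not attempt any asymptotic matching at all: it observes that each regular level set $\Sigma_t$ of $u$ is a noncompact stable MOTS (indeed spacetime totally geodesic, since $\o\Na\o\Na u=0$ gives $h_t+k|_{T\Sigma_t}=0$) with $\mu+\la J,\Na u/|\Na u|\ra=0$, so that Carlotto's Theorem~1(2) forces its Gauss curvature to vanish; then, for $t\gg1$, $\Sigma_t$ is a $C^3$ noncompact stable MOTS in an end with (boosted) harmonic asymptotics, and Carlotto's Theorem~2 is quoted as a black box to conclude $E=|P|=0$. In effect the paper reduces the proposition to an existing nonexistence result for such MOTS, while you try to extract the contradiction directly from the tensor equation $\Na\Na u=-|\Na u|k$ expanded at infinity.

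The gap in your route is exactly the one you flag, and it is not minor. With $q=1$ (the decay coming from harmonic asymptotics), the remainder $v=u-\alpha\cdot x$ only satisfies $\p^2 v=O(|x|^{-2})$, not $o(|x|^{-2})$: the spacetime harmonic equation gives $\Delta_g v=-K|\Na u|-\Delta_g(\alpha\cdot x)$, whose right-hand side has a genuine $|x|^{-2}$ part (e.g.\ $\Delta_g(\alpha\cdot x)=-E(\alpha\cdot x)|x|^{-3}+o(|x|^{-2})$), so $v$ carries a nontrivial leading term and $\p_i\p_j v$ contributes at $|x|^{-2}$. That contribution must be computed and fed into your matching system; without it the displayed identity $(E-2|P|)(\alpha_i x_j+\alpha_j x_i)=(E-|P|)(\alpha\cdot x)\delta_{ij}$ is not justified, and the overdetermination you rely on may evaporate once the extra unknowns from $\p^2v$ enter. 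In the same vein, your leading form for $k_{ij}$ needs to be derived from the precise $\pi=u^2(\mathcal{L}_Y\delta-\mathrm{div}_\delta Y\,\delta)$ structure rather than asserted, and the boosted case cannot be reduced to the unboosted one by a Lorentz boost of the \emph{initial data} (the boost acts on the spacetime, and passing back to an initial data set of the type covered by Theorem~\ref{main} is exactly the delicate step). None of this says your strategy is hopeless, but as written it does not close; the paper sidesteps all of it by invoking \cite{Carlotto}.
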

\begin{proof}
From the analysis above, we know that $M$ is foliated by MOTS if $E=|P|$.  In particular, each level set of the spacetime harmonic function $u$ asymptotic to $\frac{-|P|}{|P|}$ is a stable MOTS on which $h_t+k|_{T\S_t}=0$ (spacetime totally geodesic) by \eqref{tensorwise}, where $h_t$ is the second fundamental form of $\S_t$ with respect to $\frac{\Na u}{|\Na u|}$.  

Then, note that for $t>>1$, $\S_t$ is a $C^3$ stable MOTS. If  $(g,k)$ has (boosted) harmonic asymptotics, then we can apply \cite{Carlotto} Theorem 2 to conclude that $E=|P|=0$. 
\end{proof}

\section{Connections to quasilocal mass}\label{Connections to quasilocal mass}

\subsection{Positivity of $\MW(\S)$}
In \cite{ST}, the Riemannian positive mass theorem with Lipschitz metric along corners is used to prove positivity of the Brown York mass. This motivates us to consider if Theorem \ref{main} can provide an insight into some quasilocal quantities. 
In particular, we have the following. 
\begin{cor}\label{W}
Let $(\Omega^3, g, k)$ be a compact initial data set satisfying the dominant energy condition. Assume there exists $\MS$, a finite (possibly empty) disjoint union of connected weakly trapped surfaces, such that $H_2(\Omega_{ext}, \MS, \mathbb{Z})=0$, where $\Omega_{ext}$ denotes the portion of $\Omega$ outside $\MS$. Suppose $\S=\SS$ is a smooth surface of finitely many components with Gaussian curvature $\kappa>0$ and mean curvature $H$ with respect to the outward normal $\nu$. Denote the mean curvature of isometric embedding of $\S$ into $\R^3$ with respect to the outward normal by $H_0$. If $H > |\omega|$, where $\omega=\pi(\cdot,\nu)$, then 
$$\mathcal{W}(\S):=\frac{1}{8\pi}\int_\S H_0 - \left( H-|\omega| \right) \geq 0.\footnote{One can compare $\MW(\S)$ to the expression of the physical Hamiltonian in equation (2.14) in \cite{HH}. }$$ 
If $\MW(\S)=0$, then $\S$ is connected, $\Omega$ is diffeomorphic to a domain in $\R^3$ and can be isometrically embedded into Minkowski space. 
\end{cor}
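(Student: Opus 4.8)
The plan is to deduce Corollary~\ref{W} from Theorem~\ref{main} by attaching to $\Omega$ a Shi--Tam collar carrying time-symmetric data, in the spirit of \cite{ST} and \cite{MST}. Since $\kappa>0$, each component of $\S$ is a $2$-sphere by Gauss--Bonnet and admits a unique isometric embedding into $\R^3$ onto the boundary of a strictly convex body, with positive mean curvature $H_0$; foliate the exterior of that body by the outward equidistant spheres $\{\S_r\}_{r\ge0}$, with induced metrics $\sigma_r$ and Euclidean mean curvatures $H_0^{(r)}>0$. Because $H-|\omega|>0$ by hypothesis, I can run the Shi--Tam quasispherical (parabolic) construction on $\Omega^{ST}:=\S\times[0,\infty)$ with inner Dirichlet data chosen so that the resulting scalar-flat, asymptotically flat metric $g_+=u^2\,dr^2+\sigma_r$ induces $g|_{T\S}$ on the inner boundary $\S_0$ and has there mean curvature $H-|\omega|$ with respect to the outward normal; set $k_+\equiv0$ on $\Omega^{ST}$. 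The Shi--Tam monotonicity of $r\mapsto\frac1{8\pi}\int_{\S_r}(H_0^{(r)}-H^{(r)})$, with $H^{(r)}$ the mean curvature of $\S_r$ in $g_+$, then gives
\[
m_{ADM}(\Omega^{ST},g_+)\ \le\ \frac1{8\pi}\int_{\S}\bigl(H_0-(H-|\omega|)\bigr)\ =\ \MW(\S).
\]

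Next form $M=\Omega\cup_{\S}\Omega^{ST}$, with metric $g$ on $\Omega$ and $g_+$ on the collar, and symmetric $(0,2)$-tensor $k$ on $\Omega$ and $0$ on the collar. Then $(M,g,k)$ satisfies the hypotheses of Theorem~\ref{main}: it is asymptotically flat with the single end $\mathcal E$ coming from $\Omega^{ST}$; $g$ and $k$ are smooth up to $\S$ from either side; $g$ is Lipschitz across $\S$, since the induced metrics agree; and $k$ is allowed to jump across $\S$. The collection $\MS$ lies inside $\Omega$, the geometry near it is unchanged, and since the collar deformation retracts onto $\S$ we get $H_2(\Me,\MS,\mathbb{Z})\cong H_2(\Omega_{ext},\MS,\mathbb{Z})=0$ with $\Me=\Omega_{ext}\cup_{\S}\Omega^{ST}$. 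The dominant energy condition holds on $\Me\setminus\S$: on $\Omega_{ext}$ by hypothesis, and on the collar because $R_{g_+}\equiv0$ and $k_+\equiv0$ force $\mu_+=|J_+|=0$. Finally, with $\nu$ pointing towards $\mathcal E$ (so the ``$+$'' side is the collar) we have $H_-=H$, $H_+=H-|\omega|$, $\omega_-=\pi(\cdot,\nu)=\omega$ and $\omega_+=0$, hence
\[
(H_--H_+)-|\omega_--\omega_+|=|\omega|-|\omega|=0\ \ge\ 0.
\]
Theorem~\ref{main} gives $E\ge|P|$; since $\pi\equiv0$ near $\mathcal E$ we have $P=0$, so the ADM mass equals $E\ge0$, and combined with the displayed bound $\MW(\S)\ge m_{ADM}(\Omega^{ST},g_+)=E\ge0$.

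For the rigidity case, if $\MW(\S)=0$ the same chain forces $E=|P|=0$, so Corollary~\ref{rigid} applies to $(M,g,k)$: $\MS=\emptyset$, $M$ has a single end, $H_2(M,\mathbb{Z})=0$, and $(M,g,k)$ arises from an isometric embedding into a flat spacetime; restricting that embedding to $\Omega$ shows $(\Omega,g,k)$ embeds isometrically into a flat spacetime, i.e.\ $N$ is flat over $\Omega$. Because $M$ is homotopy equivalent to $\Omega$ (a collar was glued along $\partial\Omega$), $H_2(\Omega,\mathbb{Z})=0$, and in the long exact sequence of $(\Omega,\partial\Omega)$ the connecting homomorphism $H_3(\Omega,\partial\Omega)\to H_2(\partial\Omega)$, i.e.\ $\mathbb{Z}\to\mathbb{Z}^{b_0(\S)}$, is onto, forcing $\S$ connected. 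If moreover $\Omega_{ext}$ ($=\Omega$, as $\MS=\emptyset$) is diffeomorphic to a domain in $\R^3$, then $\S=\partial\Omega$ being a $2$-sphere the Schoenflies theorem identifies $\Omega$ with a closed ball and hence $M=\Me$ with $\R^3$; the last clause of Corollary~\ref{rigid} then realises the data as a graph of a linear combination of spacetime harmonic functions in Minkowski space, so $\Omega$ is a domain in $\R^3$ that embeds isometrically into Minkowski space.

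The part needing genuine care, as opposed to routine verification, is that the corner condition holds with \emph{equality}, $(H_--H_+)-|\omega_--\omega_+|=0$: this is exactly what permits invoking Corollary~\ref{rigid} in the threshold case and what yields the rigidity statements. Everything else --- the existence, asymptotic behaviour and mass formula of the Shi--Tam collar carrying the prescribed, non-round inner mean curvature $H-|\omega|$, together with the preservation of the regularity and topological hypotheses of Theorem~\ref{main} under the gluing --- is provided by \cite{ST} and \cite{MST}.
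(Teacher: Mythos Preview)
Your proof is correct and follows essentially the same route as the paper: build a Shi--Tam extension with prescribed inner mean curvature $H-|\omega|$, glue it to $\Omega$, verify the corner condition $(H_--H_+)-|\omega_--\omega_+|=0$, and invoke Theorem~\ref{main} and Corollary~\ref{rigid}. One point to tighten: you assert $M$ has ``the single end $\mathcal E$ coming from $\Omega^{ST}$'', but if $\Sigma$ has several components then $\Omega^{ST}=\Sigma\times[0,\infty)$ has one asymptotically flat end per component; the paper handles this by fixing one end $\mathcal E=\mathcal E_l$ and observing that large coordinate spheres in the remaining ends are weakly trapped (with respect to $\mathcal E_l$), so they may be adjoined to $\mathcal S$ --- this yields $\mathcal W(\Sigma_l)\ge E(g_l)\ge 0$ for each $l$ separately, and one then sums. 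Your homological argument for connectedness of $\Sigma$ is a pleasant alternative to the paper's ``one end'' reasoning, though both amount to the same thing once $\mathcal S=\emptyset$.
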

\begin{proof}
We would follow Bartnik-Shi-Tam construction of quasi-spherical metric (\cite{B}, \cite{ST}) for each component $\S_i$ of $\S$. First,  define on $\S$, 
\be
u=\frac{H_0}{H-|\omega|}.
\ee
As Nirenberg (\cite{Nirenberg}) and independently, Pogorelov (\cite{Pogorelov}) have solved Weyl’s isometric embedding problem, we know that by its positive Gauss curvature, $\S_i$ can be isometrically embedded into $\R^3$. 
We notate the image of isometric embedding of $\S_i$ into $\R^3$ by $\S_{0i}$, and the unbounded region of $\R^3$ outside of $\S_{0i}$ by $M_{i}=\S_{0i} \times [0,\infty)$ which stands for a foliation by unit normal flow. 
Then as in \cite{ST}, we can construct an asymptotically flat metric $g_{i}=u(r)^2dr^2+g_r$ with zero scalar curvature on $M_{i}$ (\cite{ST} Theorem 2.1(b)), where $u(0)=u$ and $g_r$ stands for the metric induced on $\S_r=\Sigma_{0i} \times \{t=r\}$ by the Euclidean metric on $\R^3$. Since the Gauss curvature of $\S_r$ is positive, we have (Lemma 4.2 in \cite{ST}), 
\be
\begin{split}
8\pi \frac{d}{dr}Q(\S_r)
:=&\frac{d}{dr} \int_{\S_r}H_0(r)\left( 1-\frac{1}{u(r)} \right)d\sigma_r \\
=&-\frac{1}{2} \int_{\S_r} R_{\S_r} u^{-1}(1-u)^2\leq 0, 
\end{split}
\ee
where $H_0(r)$ is the mean curvature of $\S_r$ with respect to the Euclidean metric of $\R^3$. Moreover, by Theorem 2.1 (c) in \cite{ST}, we have$$\lim_{r\to \infty}Q(\S_r)= E(g_{i}).$$ Therefore to prove the inequality, i.e. $\MW(\S_i)=Q(\S_{0i})\geq 0$, it suffices to show $E(g_{i})\geq 0$. 

\

Consider the glued initial data set $\tilde M=\Omega \cup_{\Sigma_i, i=1}^n M_{+i}$, with metric $\tilde g=(g, \{g_{i} \} _{i=1}^n)$ and symmetric 2 tensor $\tilde k=(k, \{ k_{i}=0\}_{i=1}^n)$, and correspondingly $\pi_{i}=0$ for $i=1,2,...,n$. By the construction above, we know that $\tilde g$ is Lipschitz across $\S$ and the dominant energy condition is satisfied on $\tilde M \setminus \Sigma$. And by equation (1.6) in \cite{ST}, we have on $\Sigma_i$, the mean curvature $H_i$ of $g_i$ with respect to the outward normal is $H-|\omega|$. Therefore, on $\Sigma_i$, we have 
$$H-H_i-|\omega-0|=H-(H-|\omega|)-|\omega|=0.$$ 
Fix $l$, for $g_{l}$, as discussed in Section \ref{Preliminaries}, for each of other extensions, a large coordinate sphere can act as a weakly trapped surface with respect to $g_{l}$. These spheres together with $\MS$ are the boundary of $\tilde M_{ext}$. Hence, by Theorem \ref{main}, we have $0\leq E(g_{l})-|P_{l}|=E(g_{l})$. As $\MW(\S)=\sum_{i=1}^n\MW(\S_i)$, we can conclude the positivity. 
And by Corollary \ref{rigid}, we can conclude the equality case.  
\end{proof}

As we can see from the proof based on \cite{ST} above, if we consider spin condition as in \cite{Shibuya} and \cite{LL}, we can arrive at the following conclusion. 
\begin{cor}(cf. \cite{ST} Theorem 4.1)
For $n\geq 3$,  let $(\Omega^n, g, k)$ be a compact initial data set in a spacetime $N^{n+1}$ satisfying the dominant energy condition.  Assume that $\S:=\SS$ has finitely many components. Let $H$ denote the mean curvature of $\S$ with respect to the outward normal $\nu$.  Suppose $\Omega$ is spin and $\S$ can be isometrically embedded into $\R^{n}$ as a strictly convex closed hypersurface.  Denote the mean curvature of isometric embedding of $\S$ into $\R^n$ with respect to the outward normal by $H_0$. If $H > |\omega|$, where $\omega=\pi(\cdot,\nu)$, then 
$$\mathcal{W}(\S):=\frac{1}{8\pi}\int_\S H_0 - \left( H-|\omega| \right) \geq 0,$$ 
and equality implies that $\Sigma$ is connected and $N$ is a flat spacetime along $\Omega$.  
\end{cor}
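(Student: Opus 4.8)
The plan is to run the proof of Corollary~\ref{W} essentially verbatim, with the single change that the appeal to Theorem~\ref{main} is replaced by the spin spacetime positive mass theorem with corners --- \cite{Shibuya} for $n=3$ and its extension to $n\ge 3$ in the spin category --- which, as recorded in the Remark above, dispenses with the second homology hypothesis and with any minimising or trapped boundary. By hypothesis each component $\Sigma_i$ of $\Sigma=\SS$ admits an isometric embedding into $\R^n$ onto a strictly convex closed hypersurface $\Sigma_{0i}$, with outward mean curvature $H_0$. On the exterior region $M_i=\Sigma_{0i}\times[0,\infty)$ foliated by the outward unit normal flow in $\R^n$ (whose leaves $\Sigma_r$ remain strictly convex), run the Bartnik--Shi--Tam construction of \cite{B} and \cite{ST} in its higher-dimensional form (cf. \cite{ST} Theorem~4.1) to obtain a smooth, asymptotically flat, scalar-flat metric $g_i=u(r)^2\,dr^2+g_r$ with $u(0)=H_0/(H-|\omega|)$; since $H>|\omega|$ this initial value is a positive smooth function, so the construction is well posed. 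Put $k_i\equiv 0$, hence $\pi_i\equiv 0$, on each $M_i$.

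First I would record the two inputs from \cite{ST}. By equation (1.6) of \cite{ST}, the mean curvature of $(M_i,g_i)$ along $\Sigma_i$ with respect to the outward normal equals $H-|\omega|$. And the Shi--Tam quantity $Q(\Sigma_r)=\frac{1}{8\pi}\int_{\Sigma_r}H_0(r)\bigl(1-u(r)^{-1}\bigr)$ is monotone non-increasing along the flow --- the higher-dimensional analogue of Lemma~4.2 of \cite{ST}, whose sign is preserved by strict convexity of each $\Sigma_r$ --- and tends to $E(g_i)$ as $r\to\infty$. Since $Q(\Sigma_{0i})=\MW(\Sigma_i)$, monotonicity gives $\MW(\Sigma_i)\ge E(g_i)$, so it suffices to show $E(g_i)\ge 0$ for each $i$, and that $E(g_i)=0$ for all $i$ in the equality case.

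Now glue: $\widetilde M=\Omega\cup_{i=1}^n M_i$, $\widetilde g=(g,\{g_i\}_{i=1}^n)$, $\widetilde k=(k,\{0\}_{i=1}^n)$. Then $\widetilde M$ is a complete, boundaryless, asymptotically flat spin manifold with $n$ ends; $\widetilde g$ is smooth on each piece and Lipschitz across $\Sigma$; the dominant energy condition holds on $\widetilde M\setminus\Sigma$ (on $\Omega$ by hypothesis, trivially on each scalar-flat momentum-free extension); and along each $\Sigma_i$, with $\nu$ pointing toward the $M_i$ end, the corner quantity is $(H_--H_+)-|\omega_--\omega_+|=H-(H-|\omega|)-|\omega|=0\ge 0$. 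Applying the spin positive mass theorem with corners to $\widetilde M$ gives, for the $i$-th end, $E(g_i)\ge |P_i|=0$ (the momentum vanishes because $k\equiv 0$ on $M_i$). Summing, $\MW(\Sigma)=\sum_i\MW(\Sigma_i)\ge\sum_i E(g_i)\ge 0$. If $\MW(\Sigma)=0$ then $\MW(\Sigma_i)=0$ and $E(g_i)=|P_i|=0$ for every $i$; the rigidity half of the spin positive mass theorem with corners then forces $(\widetilde M,\widetilde g,\widetilde k)$ to arise as a slice in a flat spacetime, so in particular $\widetilde M$ has a single asymptotically flat end. Hence $n=1$: there is no room for any further extension $M_j$, so $\Sigma$ is connected; and since $\Omega\subset\widetilde M$ embeds, together with its data, into a flat spacetime, $N$ is flat along $\Omega$, exactly as in the equality case of Corollary~\ref{rigid}.

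The main obstacle is the input itself: one needs a spin spacetime positive mass theorem with corners valid at the regularity produced here --- a metric that is merely Lipschitz across $\Sigma$ together with a second fundamental form that jumps there --- in which the corner term $(H_--H_+)-|\omega_--\omega_+|$ governs the distributional part of the momentum constraint, and which handles the several asymptotically flat ends of $\widetilde M$ simultaneously, together with its rigidity statement. In dimension three this is \cite{Shibuya}, built on the distributional curvature framework of \cite{LL}; the remaining work is to check that our corner hypothesis matches the jump condition under which the Witten-type spinor argument there applies, and that the three-dimensional result carries over to $n\ge 3$ for spin manifolds, after which the associated boundary integral delivers both $E\ge|P|$ at each end and the equality rigidity used above.
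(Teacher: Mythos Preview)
Your proposal is correct and follows exactly the route the paper indicates: the paper's own ``proof'' is the single sentence preceding the statement, observing that the argument for Corollary~\ref{W} goes through verbatim once Theorem~\ref{main} is replaced by the spin spacetime positive mass theorem with corners (\cite{Shibuya}, \cite{LL}), which removes the second-homology and trapped-surface hypotheses. Your write-up is in fact more detailed than the paper's, and your final paragraph correctly isolates the only substantive input being black-boxed---namely the availability of \cite{Shibuya}'s result (and its higher-dimensional spin analogue) at the Lipschitz-across-$\Sigma$ regularity produced by the Shi--Tam gluing, together with its rigidity statement.
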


\subsection{Comparison to the Liu-Yau mass}
In \cite{LY1}, \cite{LY2} and \cite{K}, a general version of the Brown-York mass, known as the Liu-Yau mass, is proposed for the spacetime case. 
\begin{thm}(\cite{LY1}, \cite{LY2})
Let $(\Omega^3, g, k)$ be a compact initial data set in a spacetime $N$ satisfying the dominant energy condition. Suppose $\S:= \SS$ is a smooth surface of finitely many components with Gaussian curvature $\kappa>0$ and mean curvature $H$ with respect to the outward normal. Denote the mean curvature of isometric embedding of $\S$ into $\R^3$ by $H_0$. If $H > |tr_\S k|$,
then 
$$m_{LY}(\S):=\frac{1}{8\pi}\int_\S H_0 - \sqrt{H^2-|tr_\S k|^2}\geq 0,$$
and equality implies that $\Sigma$ is connected and $N$ is a flat spacetime along $\Omega$.  
\end{thm}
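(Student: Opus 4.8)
Since this is the Liu--Yau theorem, the approach I would take is the original Jang-equation argument, reorganised so that the final step is a Riemannian positive mass theorem with corners of exactly the type that Theorem \ref{main} generalises. First I would solve Jang's equation over $(\Omega,g,k)$ with the boundary condition forcing the solution $f$ to blow up to $+\infty$ along $\Sigma=\partial\Omega$, obtaining the Jang graph $(\widehat\Omega,\widehat g)$, $\widehat\Omega=\operatorname{graph}(f)\subset\Omega\times\mathbb R$. The Schoen--Yau identity gives
\[
R_{\widehat g}=2(\mu-J(\widehat w))+|\widehat A-k|_{\widehat g}^2+2|\widehat X|_{\widehat g}^2-2\operatorname{div}_{\widehat g}\widehat X
\]
for an explicit vector field $\widehat X$ built from $f$ and $k$; by the dominant energy condition the first three terms are $\ge 0$, so $R_{\widehat g}\ge -2\operatorname{div}_{\widehat g}\widehat X$ pointwise. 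The blow-up of $f$ at $\Sigma$ makes $(\widehat\Omega,\widehat g)$ asymptotically cylindrical over $(\Sigma,\gamma)$, and the boundary asymptotics of the Jang graph show that, running down the cylinder, the cross-sections converge to $(\Sigma,\gamma)$ carrying mean curvature $\sqrt{H^2-(tr_\Sigma k)^2}$ with respect to the outward direction.

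Next I would run the Bartnik--Shi--Tam quasi-spherical construction exactly as in the proof of Corollary \ref{W}: using $\kappa>0$ to isometrically embed $(\Sigma,\gamma)$ into $\mathbb R^3$ and solving, for each component, for an asymptotically flat, scalar-flat metric $g_{ST}$ on $\Sigma_0\times[0,\infty)$ whose inner boundary mean curvature is the prescribed positive function $\sqrt{H^2-(tr_\Sigma k)^2}$. Shi--Tam's monotonicity of $Q(\Sigma_r)$ then gives $m_{LY}(\Sigma)=Q(\Sigma_0)\ge\lim_{r\to\infty}Q(\Sigma_r)=m_{ADM}(g_{ST})$, so it suffices to show $m_{ADM}(g_{ST})\ge 0$. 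Gluing $\widehat\Omega$ to this extension along $\Sigma$ produces a complete asymptotically flat manifold, Lipschitz across the corner $\Sigma$, with $R\ge 0$ in the interior (the divergence term $\widehat X$ being absorbed after truncating or capping the cylindrical end in the manner of Liu--Yau) and with the mean curvatures along $\Sigma$ matched; the Riemannian positive mass theorem with corners --- the time-symmetric reduction of Theorem \ref{main}, equivalently \cite{ST}, \cite{HMT} --- then forces $m_{ADM}(g_{ST})\ge 0$, whence $m_{LY}(\Sigma)\ge 0$. For rigidity, $m_{LY}(\Sigma)=0$ makes every inequality above an equality: the Shi--Tam extension is flat $\mathbb R^3$, the Jang graph is totally geodesic with $\widehat X\equiv 0$ and $\widehat A=k$, which back on $N$ says $N$ is flat along $\Omega$, and connectedness of $\Sigma$ comes from the corners rigidity.

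The principal obstacle is the Jang-equation step: proving existence of $f$ with the prescribed boundary blow-up, controlling or excluding interior blow-up cylinders over MOTS, and --- most delicately --- the boundary asymptotic analysis that identifies $\sqrt{H^2-(tr_\Sigma k)^2}$ as the effective mean curvature and shows $\operatorname{div}_{\widehat g}\widehat X$ does not obstruct the gluing. This is precisely why the present paper, which does not develop the Jang equation on its (possibly singular) initial data sets, simply quotes \cite{LY1}, \cite{LY2} for this theorem. What does lie within reach of the tools above is the elementary comparison $\mathcal{W}(\Sigma)\ge m_{LY}(\Sigma)$: on $\Sigma$ one has $\omega(\nu)=\pi(\nu,\nu)=k(\nu,\nu)-tr_g k=-tr_\Sigma k$ and $\omega(X)=k(X,\nu)$ for $X$ tangent to $\Sigma$, so $|\omega|\ge|tr_\Sigma k|$, and since $H-a\le\sqrt{H^2-a^2}$ for $0\le a\le H$ we get $H-|\omega|\le\sqrt{H^2-(tr_\Sigma k)^2}$ pointwise; integrating yields $\mathcal{W}(\Sigma)\ge m_{LY}(\Sigma)$, which is what transfers the localised Penrose-type lower bounds of \cite{ALY} from $m_{LY}$ to $\mathcal{W}$.
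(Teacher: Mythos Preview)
Your reading of the situation is exactly right: the paper does not prove this statement at all --- it is quoted verbatim as the Liu--Yau theorem from \cite{LY1}, \cite{LY2}, and the paper's own contribution in this subsection is precisely the pointwise comparison $\mathcal{W}(\Sigma)\ge m_{LY}(\Sigma)$ you wrote out at the end. Your derivation of that comparison matches the paper's line for line: $|\omega|^2=|tr_\Sigma k|^2+|k(\cdot,\nu)^T|^2\ge|tr_\Sigma k|^2$, then $H-|\omega|\le H-|tr_\Sigma k|\le\sqrt{H^2-|tr_\Sigma k|^2}$, then integrate.

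Your sketch of the Jang-equation proof is a fair outline of the original Liu--Yau argument, and you correctly flag the substantive analytic content (existence of the blow-up solution, boundary asymptotics identifying $\sqrt{H^2-(tr_\Sigma k)^2}$, handling of the divergence term and possible interior apparent-horizon cylinders) as lying outside the scope of this paper. There is nothing to compare against here, since the paper offers no alternative argument.
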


Let $A$ denote the index for tensor of  $T\Sigma$. Since 
$$\pi(\nu,\nu)=k(\nu,\nu)-(tr_gk)g(\nu,\nu)=-tr_{\Sigma}k,$$ 
$$\pi(\p_A,\nu)=k(\p_A,\nu)-(tr_gk)g(\p_A,\nu)=k(\p_A,\nu),$$ 
we have 
$$|\omega|^2=|\pi(\cdot,\nu)|^2=|\pi(\nu,\nu)|^2+|\pi(\cdot,\nu)^T|^2=|tr_{\Sigma}k|^2+|k(\cdot,\nu)^T|^2,$$ 
where $^T$ denotes the restriction of a tensor onto $T\S$.  

If $H>|tr_{\S}k|$,  then pointwisely on $\Sigma$, we have
\be
\begin{split}
&H_0-(H-|\omega|)\\
\geq & H_0-(H-|tr_{\Sigma} k|)\\
\geq & H_0-(\sqrt{H+|tr_{\Sigma} k|}\sqrt{H-|tr_{\Sigma} k|})\\
= & H_0-(\sqrt{H^2-|tr_{\Sigma} k|^2}).\\
\end{split}
\ee 
Upon integration on $\Sigma$,  we thus have $$\MW(\Sigma)\geq m_{LY}(\Sigma).$$ 
Therefore, we have the following result.  
\begin{prop}
Let $(\Omega^3, g, k)$ be a compact initial data set, which is a compact spacelike hypersurface in a spacetime $N$ satisfying the dominant energy condition. Suppose $\S:= \SS$ is a smooth surface of finitely many components with Gaussian curvature $\kappa>0$ and mean curvature $H$ with respect to the outward normal. Denote the mean curvature of isometric embedding of $\S$ into $\R^3$ by $H_0$. If $H > |tr_\S k|$,
then 
$$\mathcal{W}(\S):=\frac{1}{8\pi}\int_\S H_0 - \left( H-|\omega| \right) \geq 0,$$  
where $\omega=\pi(\cdot,\nu)$.
Moreover,  equality implies that $\Sigma$ is connected and $N$ is a flat spacetime along $\Omega$.  
\end{prop}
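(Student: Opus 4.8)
The plan is to deduce the statement from the comparison $\MW(\S)\geq m_{LY}(\S)$ with the Liu--Yau mass established in the paragraph preceding it, together with the Liu--Yau positive mass theorem; in particular neither Theorem~\ref{main} nor the topological hypothesis $H_2(\Omega_{ext},\MS,\mathbb{Z})=0$ is needed here.

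First I would record the elementary pointwise facts on $\S$. Splitting the $1$-form $\pi(\cdot,\nu)$ into its component along $\nu$ and its tangential part and using $\pi(\nu,\nu)=-tr_\S k$ together with $\pi(\cdot,\nu)^T=k(\cdot,\nu)^T$, one obtains
\[
|\omega|^2=|tr_\S k|^2+|k(\cdot,\nu)^T|^2\ \geq\ |tr_\S k|^2,
\]
so that $|\omega|\geq|tr_\S k|$ on $\S$. Since $0\le |tr_\S k|<H$ by hypothesis, this yields pointwise on $\S$
\[
H-|\omega|\ \le\ H-|tr_\S k|\ =\ \sqrt{(H-|tr_\S k|)^2}\ \le\ \sqrt{(H-|tr_\S k|)(H+|tr_\S k|)}\ =\ \sqrt{H^2-|tr_\S k|^2},
\]
the last step because $H-|tr_\S k|\le H+|tr_\S k|$. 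Integrating over $\S$ and dividing by $8\pi$ gives $\MW(\S)\ge m_{LY}(\S)$.

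Next, as $\S$ has positive Gaussian curvature, $H>|tr_\S k|$, and $N$ satisfies the dominant energy condition, the Liu--Yau theorem applies and gives $m_{LY}(\S)\ge 0$, whence $\MW(\S)\ge 0$. For the rigidity claim, if $\MW(\S)=0$ then the chain $\MW(\S)\ge m_{LY}(\S)\ge 0$ forces $m_{LY}(\S)=0$, and the equality case of the Liu--Yau theorem then gives that $\S$ is connected and that $N$ is flat along $\Omega$; one may additionally note that $\MW(\S)=m_{LY}(\S)$ then forces the two pointwise inequalities above to be equalities, so in fact $tr_\S k\equiv 0$ and $\omega\equiv 0$ on $\S$, although this refinement is not needed for the stated conclusion.

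I do not expect a genuine obstacle: all of the analytic weight is carried by the Liu--Yau theorem (whose proof runs through the Jang graph over $\Omega$ and the Shi--Tam quasispherical construction), and what is left is the purely algebraic comparison above. The one point worth a careful line is that the hypothesis assumed here is $H>|tr_\S k|$ rather than $H>|\omega|$ as in Corollary~\ref{W}; this is exactly why routing the argument through $m_{LY}$ is the efficient route, since the Liu--Yau hypotheses are then satisfied verbatim.
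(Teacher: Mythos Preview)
Your proposal is correct and follows essentially the same route as the paper: the paper derives the pointwise chain $H_0-(H-|\omega|)\geq H_0-(H-|tr_\Sigma k|)\geq H_0-\sqrt{H^2-|tr_\Sigma k|^2}$ from the decomposition $|\omega|^2=|tr_\Sigma k|^2+|k(\cdot,\nu)^T|^2$, integrates to get $\MW(\S)\geq m_{LY}(\S)$, and then invokes the Liu--Yau theorem for both positivity and rigidity. Your additional observation that equality forces $tr_\Sigma k\equiv 0$ and $\omega\equiv 0$ is a harmless (and correct) extra remark not stated in the paper.
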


\begin{rem}
On the other hand, it is worthwhile to mention that the proof of Corollary \ref{W} by Theorem \ref{main} does not need to consider the Jang graph as in \cite{LY1, LY2}.  This property gives an extra application of $\MW(\S)$ which will be discussed in Section \ref{Applications of W}. 
\end{rem}

\subsection{Rigidity of $\MW(\S)$ in $\R^{3,1}$}
From the proof of positivity above, we cannot conclude whether $\MW(\S)=0$ if $\Omega$ lies in a flat spacetime. 
Following the idea of \cite{MST}, we can see the rigidity of $\MW(\S)$ if $\Omega$ is in $\R^{3,1}$.

\begin{cor}\label{RM}(cf. \cite{MST} Theorem 4.1)
Let $(\Omega,^3 g, k)\subset \R^{3,1}$ be a compact initial data set.  If  $\S=\p \O$ is smooth and connected, has positive Gaussian curvature and $H>|tr_{\S}k|$, where $H$ is the mean curvature of $\S$ with respect to the outward normal, then $\MW(\S)\geq 0$.  Moreover, $\MW(\S) = 0$ if and only if $\O$ is a domain within a hyperplane in $\R^{3,1}$. 
\end{cor}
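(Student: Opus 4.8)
The plan is to separate the inequality $\MW(\S)\ge 0$, which follows with no new work, from the rigidity characterization, which is the real content. First, for positivity: a compact spacelike hypersurface $\O\subset\R^{3,1}$ satisfies $\mu=|J|=0$ pointwise, so it is a spacelike slice of a flat, hence dominant-energy, spacetime, and with $\kappa>0$ and $H>|\tr_\S k|$ it falls under the Proposition comparing $\MW(\S)$ with the Liu--Yau mass; that comparison gives $\MW(\S)\ge m_{LY}(\S)\ge 0$ through the pointwise chain $H-|\omega|\le H-|\tr_\S k|\le\sqrt{H^2-|\tr_\S k|^2}$ (valid since $|\omega|^2=|\tr_\S k|^2+|k(\cdot,\nu)^T|^2$ and $H>|\tr_\S k|$) together with the integral Liu--Yau bound $\int_\S H_0\ge\int_\S\sqrt{H^2-|\tr_\S k|^2}$. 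The ``if'' half of the rigidity is then easy: if $\O$ is a domain in a hyperplane $P\subset\R^{3,1}$, then $P$ is spacelike and totally geodesic, so $k\equiv 0$, $\omega\equiv 0$, and $(\O,g)$ is a flat Euclidean domain, while $\S=\p\O\subset P\cong\R^3$ is a closed surface with $\kappa>0$ whose isometric embedding into $\R^3$, by Cohn--Vossen rigidity, is congruent to $\S\hookrightarrow P$; hence $H_0=H$ and $\MW(\S)=\frac{1}{8\pi}\int_\S(H_0-H)=0$.

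For the ``only if'' half I would first squeeze equality through the chain above: $\MW(\S)=0$ forces $m_{LY}(\S)=0$ and, pointwise on $\S$, $|\omega|=|\tr_\S k|$ and $H-|\tr_\S k|=\sqrt{H^2-|\tr_\S k|^2}$, which (as $H>|\tr_\S k|$) give $\tr_\S k\equiv 0$ and hence $\omega\equiv 0$ on $\S$. Then, following the idea of \cite{MST} Theorem~4.1, I would attach to $(\O,g,k)$ the Bartnik--Shi--Tam scalar-flat exterior solution $(M_1,g_1)$ with $k_1\equiv 0$ exactly as in the proof of Corollary~\ref{W}, and note that $\MW(\S)=0$ forces, via the monotonicity $8\pi\frac{d}{dr}Q(\S_r)=-\frac12\int_{\S_r}R_{\S_r}u^{-1}(1-u)^2$ and $R_{\S_r}>0$, that the conformal factor satisfies $u\equiv 1$; thus $(M_1,g_1)$ is isometrically the Euclidean exterior $\R^3\setminus\overline D$ with $H_0=H$ on $\S$. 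The glued data set $(\widetilde M,\widetilde g,\widetilde k)$ is asymptotically flat, has $\mu=|J|=0$ off $\S$ and $(H_--H_+)-|\omega_--\omega_+|=0$ on $\S$, and $0\le E(\widetilde g)-|P(\widetilde g)|\le E(\widetilde g)\le\MW(\S)=0$, so $E(\widetilde g)=|P(\widetilde g)|=0$ and Corollary~\ref{rigid} realizes $(\widetilde M,\widetilde g,\widetilde k)$ as a graph $\bar t=-\Psi(\bar x)$ over $\R^3$ in Minkowski space; since the exterior piece carries $k_1\equiv 0$ and the flat metric, it is totally geodesic, so $\Psi$ is constant on $\R^3\setminus\overline D$, the image of $\S$ lies in a spacelike hyperplane $P_0$, and by the $C^1$ matching of $\widetilde g$ across $\S$ we get $\Na\Psi=0$ along $\S$ --- that is, the graph of $\O$ is tangent to $P_0$ all along $\p\O$.

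The hard part will be the last step: upgrading ``tangent to $P_0$ along $\p\O$'' to ``contained in $P_0$'', and it is here that I would genuinely import \cite{MST}. First I would observe that the given inclusion $\O\subset\R^{3,1}$ and the restriction to $\O$ of the graph embedding are two isometric embeddings of the connected manifold $(\O,g)$ inducing the same second fundamental form $k$, so by the congruence theorem for hypersurfaces of a space form they agree up to an isometry of $\R^{3,1}$; up to this ambient motion, $\O$ is the graph of $-\Psi$ over the convex body bounded by the image of $\S$, with $\Psi$ constant and $\Na\Psi$ zero along $\p\O$. The rigidity theorem of \cite{MST} for compact spacelike hypersurfaces in $\R^{3,1}$ then asserts that such a graph --- one with $\mu=|J|=0$ whose boundary lies in a hyperplane to which it is tangent --- must be flat, i.e.\ $\Psi\equiv\text{const}$, so that $\O$ is a domain in $P_0$ and we are done. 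I expect this unique-continuation/rigidity statement to be the genuine obstacle; the positive Gaussian curvature of $\S$ is used to make the Bartnik--Shi--Tam construction close up with an honest convex reference surface, and the rest (the single boundary component, orientations, and the regularity of $g$, $k$ and $u$ across $\S$) is routine.
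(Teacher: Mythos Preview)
Your positivity argument and the ``if'' direction of rigidity are correct and essentially match the paper: both proofs pass through $\MW(\S)\ge m_{LY}(\S)\ge 0$, and the Cohn--Vossen step is standard.

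For the ``only if'' direction, however, your route diverges from the paper's and runs into two problems. The paper follows \cite{MST} to construct a \emph{maximal} spacelike slice $(\tilde\Omega,\tilde g,\tilde k)$ spanned by $\S$ in $\R^{3,1}$ with $\tilde H>0$; since $\tr_{\tilde g}\tilde k=0$ and $\tilde\mu=0$, one gets $\tilde R=|\tilde k|^2\ge 0$, and then
\[
\MW(\S)\;\ge\; m_{LY}(\S)\;=\;\int_\S(H_0-|\vec H|)\;\ge\;\int_\S(H_0-\tilde H)\;\ge\;0
\]
closes via Shi--Tam applied directly to $(\tilde\Omega,\tilde g)$. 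When $\MW(\S)=0$, Shi--Tam rigidity forces $(\tilde\Omega,\tilde g)$ to be a flat Euclidean domain, hence $\tilde R=0$ gives $\tilde k\equiv 0$ and $\tilde\Omega$ lies in a hyperplane; the passage from $\tilde\Omega$ back to $\Omega$ is the content of the equality case in \cite{MST}~Theorem~4.1. Crucially, this route needs no topological hypothesis on $\Omega$ (Shi--Tam works via spin in dimension three).

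Your argument instead glues on the Shi--Tam extension and invokes Corollary~\ref{rigid} on the glued data. This has two gaps. First, Corollary~\ref{rigid} inherits from Theorem~\ref{main} the hypothesis that some $\MS$ exists with $H_2(M_{ext},\MS;\Z)=0$; nothing in the statement of Corollary~\ref{RM} guarantees this for $\Omega$, and you do not check it. Second, your concluding appeal to ``the rigidity theorem of \cite{MST} for graphs with $\mu=|J|=0$ tangent to a hyperplane along the boundary'' is not a statement that appears in \cite{MST} in that form, so the final step is not justified as written. Both issues disappear if you notice that you have already established $m_{LY}(\S)=0$: at that point you may simply invoke the rigidity half of \cite{MST}~Theorem~4.1 directly and drop the entire gluing/Corollary~\ref{rigid} detour. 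That is, in effect, what the paper does --- with the maximal-slice construction making the Liu--Yau rigidity explicit rather than a black box.
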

\begin{proof}
Since the mean curvature vector of $\Omega$ is spacelike.  Then we can follow the proof of Theorem 4.1 in \cite{MST} to construct a maximal spacelike slice $(\tilde\Omega, \tilde g, \tilde k)$ spanned by $\Sigma$ such that $\tilde{H}>0$, where $\tilde{H}$ is the mean curvature of $\S$ with respect to the outward normal of $\tilde \Omega$. 

\
 
On $\tilde \Omega$, since $tr_{\tilde g} \tilde k=0$, we have $0=2\tilde \mu=\tilde R-|\tilde k|_{\tilde g}^2$, in particular, $\tilde R\geq 0$. We then consider
\be
\begin{split}
&8\pi\MW(\S)\\
\geq &8\pi m_{LY}(\S)\\
= & \int_{\S}H_0-|\vec{H}| \\
\geq &\int_\S H_0 - \tilde H, 
\end{split}
\ee
where $\vec{H}$ is the mean curvature vector for $\Sigma \subset \R^{3,1}$ and $H_0$ is the mean curvature of isometric embedding of $\S$ into $\R^3$. Note that, since we have positive scalar curvature and $\tilde H>0$, we can conclude $\MW(\S)\geq 0$ by directly applying Theorem 1 of \cite{ST}. Hence, we do not need extra topological assumptions.

For the second part, if $\MW(\S)=0$, then we have $(\Omega,g)$ must be a flat domain by \cite{ST}, in particular, we have $R=0$. Therefore, $k\equiv0$ and $\O$ is a domain within a hyperplane in $\R^{3,1}$. On the other hand, if $\O$ is a domain within a hyperplane in $\R^{3,1}$, then we know $\MW(\S)=0$ by definition.  
\end{proof}

\section{Applications of $\MW(\S)$}\label{Applications of W}

\subsection{Comparison Theorem}
In \cite{ST3} and \cite{ALY}, it is shown that comparisons of different quasilocal masses which depend on the boundary data yield conclusions on the internal geometry of the initial data sets.  Following their ideas, and by Theoerm \ref{main}, merely on the initial data set level, we first have obtained a comparison theorem of $\MW$ and the Hawking mass on admissible initial data sets, which is defined as follows. 

\begin{defn}\label{admIDS} (cf.  \cite{ALY} Definition 1.1, 2.3)
An initial data set $(\O^3,g,k)$ is admissible if 
\begin{enumerate}
\item $\O$ is compact and simply connected, 
\item there exists $\MS$, a finite (possible empty) disjoint union of connected weakly trapped surfaces, such that $H_2(\Omega_{ext}, \MS, \mathbb{Z})=0$, where $\Omega_{ext}$ denotes the portion of $\Omega$ outside $\MS$, 
\item the dominant energy condition holds, i.e. $\mu\geq|J|_g$ on $\Omega$, 
\item $\S:=\SS$ is smooth, connected and has a positive Gaussian curvature, 
\item on $\S$, $H>|\omega|\not\equiv 0$, where $H$ is computed with respect to the outward normal $\nu$ and $ \omega=\pi(\cdot,\nu)$, and
\item  there exists $d_1>0$ depending on the Sobolev constant of an asymptotically flat extension $(\tilde M= \O \cup_{\S} M_+, \tilde{g}=(g,g_+))$ and $d_2>0$ depending on $||\omega||_{L^1(\S)}$ such that 
\be\label{eqK}
\begin{split}
||K^2||_{L^{\frac{3}{2}}(\O)}<d_1,\,\,\,\,\,\,\,\,\,\,\,\ ||K^2||_{L^{\frac{6}{5}}(\O)}<d_2,
\end{split}
\ee
where $K=tr_g k$ and $d_1$, $d_2$ would be determined in the proof of Proposition \ref{conf}.
\end{enumerate}
\end{defn}

Let us recall the ideas of minimising hulls (\cite{HI}) and the Hawking mass.
\begin{defn}
Let $E$ be a set in $\O$ with locally finite perimeter. $E$ is said to be a minimizing hull in $\O$  if $|\p^*E \cap W|\leq|\p^*F \cap W|$ for any set $F$ with locally finite perimeter
such that $F \supset E$ and $F \setminus E \subset \subset \O $, and for any compact set $W$ with $F\setminus E \subset W \subset \O$.
Here $\p^*E$  and  $\p^*F$ are the reduced boundaries of $E$ and $F$ respectively. $E$ is said to be
a strictly minimizing hull if equality (for all $W$) implies $E \cap \O  = F \cap \O $  a.e.
\end{defn}

Suppose $E$ is an open set of $\O$ such that there is a strictly minimizing hull in $\O$ containing $E$, then define $E'$ to be the intersection of all strictly minimizing hulls containing $E$. $E'$ is called the strictly minimizing hull of $E$. As pointed out in \cite{ST3}, we have the following. Let $E \subset \subset \O$ be an open set. Suppose $\O$ is compact with smooth boundary which has positive mean curvature with respect to the outward normal, then $E'$ exists and $E'\subset\subset\O$ . 

\begin{defn}
Let $E$ be an open set in a Riemannian manifold with compact $C^1$ boundary, the
Hawking mass of $\p E$ is defined as 
\be
m_H(\p E)=\sqrt{\frac{|\p E|}{16\pi}}(1-\frac{1}{16\pi}\int_{\p E}{H^2}), 
\ee
where $|\p E|$ is the area of $\p E$.
\end{defn}

In this section the ADM energy would be denoted by $m_{ADM}$. Now, we can state the comparison theorem. 

\begin{thm}\label{PI}(cf. \cite{ST3} Theorem 3.1, \cite{ALY} Theorem 1.2)
Let $(\Omega^3,g,k)$ be admissible. Then, for any connected minimising hull $E$ in $\Omega$ where $E\subset \subset \Omega$ with $C^{1,1}$ boundary $\p E$, we have 
$$\MW(\S)\geq m_H(\p E).$$
\end{thm}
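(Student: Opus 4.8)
The plan is to glue a Shi–Tam-type extension onto $\Omega$ along $\Sigma$, run the weak inverse mean curvature flow (IMCF) from $\partial E$ outward in the extended manifold, and compare the monotone Geroch quantity of this flow with $\MW(\Sigma)$. First I would recall from the proof of Corollary \ref{W} that on each component of $\Sigma$ one builds a scalar-flat quasispherical extension $(M_+, g_+)$, with $k_+ = 0$, whose boundary mean curvature equals $H - |\omega|$, and whose $E(g_+)$ is the limit of the Shi–Tam quasilocal quantity $Q(\Sigma_r)$; moreover $Q(\Sigma_0) = \MW(\Sigma)$. The admissibility conditions \eqref{eqK} are exactly what is needed so that, after a conformal change (Proposition \ref{conf}, to be used as a black box here), one obtains on the glued manifold $\tilde M = \Omega \cup_\Sigma M_+$ a metric $\hat g = \phi^4 \tilde g$ that is asymptotically flat, has nonnegative scalar curvature away from $\Sigma$, is Lipschitz across $\Sigma$, has $m_{ADM}(\hat g) \le$ (a controlled multiple of) $E(g_+)$, and — crucially — keeps the corner mean curvature jump of the right sign, so that the Riemannian positive mass theorem with corners of \cite{HMT} (equivalently Theorem \ref{main} with $k \equiv 0$ on the extension) still applies. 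One also needs $\partial E$ to remain outer-minimising in $\hat M$, which follows because $\Omega$ has mean-convex boundary and $\hat g$ agrees conformally with $g$ on $\Omega$.

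Next I would invoke Huisken–Ilmanen's weak IMCF starting from $\partial E$ inside $\hat M$: since $\partial E$ is $C^{1,1}$ and outer-minimising and $(\hat M, \hat g)$ is asymptotically flat with nonnegative scalar curvature (in the distributional/corner sense, where the corner contributes with a favorable sign by the Lipschitz-gluing results of \cite{HMT, ST}), the Geroch monotonicity
\be
m_H(\Sigma_\tau) \ \text{is nondecreasing in } \tau,
\ee
holds along the flow, and $\lim_{\tau \to \infty} m_H(\Sigma_\tau) \le m_{ADM}(\hat g)$. The point where the flow (or its jumps) crosses the corner $\Sigma$ is handled exactly as in \cite{ST3}: a Lipschitz corner with the correct mean-curvature inequality does not destroy monotonicity, because the jump term in the Geroch formula has a sign. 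Chaining these inequalities gives
\be
m_H(\partial E) \ \le\ \lim_{\tau\to\infty} m_H(\Sigma_\tau)\ \le\ m_{ADM}(\hat g)\ \le\ \sum_i E(g_{+,i})\ =\ \sum_i \MW(\Sigma_i)\ =\ \MW(\Sigma).
\ee

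I expect the main obstacle to be the corner analysis: one must ensure that the weak IMCF — including its jump times, where $\Sigma_\tau$ instantaneously replaces a region by its strictly minimising hull — interacts correctly with the non-smooth gluing hypersurface $\Sigma$, so that the monotonicity of $m_H$ is genuinely preserved and no mass is lost at the corner. This requires either approximating the Lipschitz metric by smooth metrics with scalar curvature bounded below (à la \cite{HMT}) and passing to the limit in Geroch's inequality, or a direct argument that the distributional scalar curvature measure of $\hat g$ is nonnegative; the decay control \eqref{eqK} on $K = \tr_g k$ is what keeps the conformal factor $\phi$ close to $1$ and the extension asymptotically flat, and it must be tracked through all of these estimates. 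The secondary technical point is verifying $m_{ADM}(\hat g) \le \sum_i \MW(\Sigma_i)$, i.e.\ that the conformal correction does not increase the energy beyond the sum of the Shi–Tam energies; this is where Proposition \ref{conf} does the real work and I would simply cite it.
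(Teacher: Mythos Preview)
Your chain of inequalities breaks at the step $m_{ADM}(\hat g)\le \sum_i E(g_{+,i})$. The conformal factor solving $\Delta u+\tfrac18 b\,u=0$ with $u\to 1$ at infinity satisfies $u\ge 1$ and has expansion $u=1+A/|x|+\cdots$ with $A\ge 0$, so the conformal change \emph{increases} the ADM mass: $m_{ADM}(\hat g)=m_{ADM}(g_+)+2A\ge m_{ADM}(g_+)$. With your choice of extension (boundary mean curvature $H-|\omega|$, as in Corollary~\ref{W}) the Shi--Tam monotonicity already gives $m_{ADM}(g_+)\le \MW(\Sigma)$ with no guaranteed positive gap, so there is nothing left to absorb the increment $2A$, and the inequality you need cannot be concluded. (Note also that your equality $\sum_i E(g_{+,i})=\sum_i\MW(\Sigma_i)$ should be $\le$.)

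The paper's device is to use a \emph{different} Shi--Tam extension, built from $u=H_0/H$ so that the boundary mean curvature of $(M_+,g_+)$ is $H$, not $H-|\omega|$. Then $m_{ADM}(g_+)\le m_{BY}(\Sigma)$ by Shi--Tam, and crucially
\[
\MW(\Sigma)=m_{BY}(\Sigma)+\frac{1}{8\pi}\int_\Sigma|\omega|\,d\sigma\ \ge\ m_{ADM}(g_+)+\frac{1}{8\pi}\int_\Sigma|\omega|\,d\sigma.
\]
The strictly positive quantity $\tfrac{1}{8\pi}\int_\Sigma|\omega|$ (this is why admissibility demands $|\omega|\not\equiv 0$) is the room used to absorb $2A_\delta$; the smallness constant $d_2$ in \eqref{eqK} is chosen precisely so that $\overline{\lim}_{\delta\to 0}A_\delta\le \tfrac{1}{16\pi}\int_\Sigma|\omega|$. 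This is the content of Lemma~\ref{conf}, which you cannot use as a black box with the other extension.

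Two further points where your outline diverges from what actually works. First, the paper does not run IMCF through a Lipschitz corner: it mollifies $(\tilde g,\tilde k)$ to a smooth family $(g_\delta,k_\delta)$ (Lemma~\ref{MollifiedIDS}), performs the conformal change on $g_\delta$, runs the Huisken--Ilmanen argument on the smooth manifold $(\tilde M,\hat g_\delta)$, and only then lets $\delta\to 0$. Second, the conformal factor is taken with \emph{Neumann} condition $\partial_\nu u_\delta=0$ on $\partial F$ (where $F$ is a smooth approximant of $E$); this makes $H_{\hat g_\delta}^2\,dA_{\hat g_\delta}=H_g^2\,dA_g$ on $\partial F$, which is exactly what ties the Hawking mass in $\hat g_\delta$ back to $m_H(\partial F)$ computed in $g$. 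Without this Neumann trick, relating the Hawking mass before and after the conformal change is not immediate.
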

\begin{proof}
First of all, we would follow Section 3 of \cite{ALY} to construct an asymptotically flat extension of $(\O,g,k)$. Since $H>|\omega|\geq0$, we can construct the Shi-Tam extension $(M_+,g_+,k_+=0)$ by $u=\frac{H_0}{H}$ instead of the function used in the proof of Corollary \ref{W}. Then, by \cite{ST}, we know that the Brown-York mass satisfies 
\be\label{BYmonotone}
m_{BY}(\Sigma):=\frac{1}{8\pi}\int_{\S} \left( H_0-H \right) dA_g\geq m_{ADM} (g_+),
\ee
where $H_0$ is the mean curvature of isometric embedding of $\S$ into $\R^3$ with respect to the outward normal. On $\S$, $H_+=H$, where $H_+$ is the mean curvature of $g_+$ on $\S$ with respect to the normal pointing to $M_+$. We also have $R_{g_+}=0$.

Then, for the glued initial data set $(\ti M, \ti g, \ti k)=(\O \cup_{\S}M_+,(g,g_+),(k,0))$, $\ti g$ is Lipschitz across $\S$. We would mollify the initial data set along the signed distance direction of $\S$ as in Section 3 of \cite{M1} (cf.  Section 5 of \cite{AKS}). In particular, we have the following. 
\begin{lma}\label{MollifiedIDS}
There exists a family of smooth initial data $\{(g_{\d}, k_{\d})\}_{\d>0}$ with the following properties: 
\begin{enumerate}
    \item $( g_\d, k_\d)=(\ti g, \ti k)$ outside $\S\times(-\d,\d)$, 
    \item $||g_\d||_{C^{0,1}(\ti M)}$ is uniformly bounded, $|| k_\d||_{L^{\infty}(\ti M)}$ is uniformly bounded, 
    \item $R_{\delta}\geq-C$  inside $\S\times(-\d,\d)$, where $C>0$ independent of $\d$. 
\end{enumerate}
\end{lma}
Let $B_{\d}=2\mu_\d=R_{\d}+K_{\d}^2-|k_{\d}|_{\d}^2$. 
Note that $R_g+K^2-|k|_g^2=2\mu\geq2|J|_g\geq 0$ and  $R_{g_+}+K_+^2-|k_+|_{g_+}^2=0$, in particular, we know that 
\be 
\begin{split}
B_{\d-}&=0 \,\ \,\  \,\ \,\ \text{outside} \,\ \S \times(-\d,\d) \text{ and} \\
B_{\d-}&=O(1) \,\ \text{inside} \,\ \S \times(-\d,\d), 
\end{split}
\ee
where $B_{\d-}:=\max\{0, -\mu_{\d}\}$ and $O(1)$ is a bounded quantity independent of $\d$.  

\

Next, we are going to seek a conformal factor to make $\ti M$ have positive scalar curvature. To be precise, we are going to show the following.
\begin{lma}\label{conf}
Given the deformation in Lemma \ref{MollifiedIDS}, and let $F \subset\subset \O \subset \ti M$, for sufficiently small $d_1>0$ for \eqref{eqK} depending on the Sobolev constant of asymptotically flat extension manifold $(\ti M, \ti g)$, there exist a $C^2(\ti M\setminus F)$ positive function $u_\d\geq 1$ such that the conformal metric $\hat g_{\d}=u_{\d}^4 g_{\d}$ has non-negative scalar curvature on $\ti M\setminus F$.  Moreover, there exists $d_2>0$ depending on $||\omega||_{L^1(\S)}$ for \eqref{eqK} such that 
$$\MW(\S)\geq \o \lim_{\d\to 0}\, m_{ADM}(\hat g_{\d}).$$
\end{lma}
\begin{proof}
Let $b_{\d}=B_{\d-}+K_{\d}^2$, which is compactly supported on $\O_{\d}:=\O \cup (\S\times [0,\d])$. Then, let us consider the following PDE, 
\begin{enumerate}
\item $\Lp_{\d} u_{\d}+\frac{1}{8}b_{\d}u_{\d}=0$  on $\ti M\setminus F$,
\item $u_{\d}\to 1$ as $|x|\to \infty$,
\item $\p_{\nu}u_{\d}=0$ on $\p F$.
\end{enumerate}
Let $w_{\d}=u_{\d}-1$, we have 
\begin{enumerate}
\item $\Lp_{\d} w_{\d}+\frac{1}{8}b_{\d}w_{\d}=-\frac{1}{8}b_{\d}$  on $\ti M\setminus F$,
\item $w_{\d}\to 0$ as $|x|\to \infty$,
\item $\p_{\nu}w_{\d}=0$ on $\p F$.
\end{enumerate}
Following Lemma 3.2 in \cite{SY1}, there exists $d_0>0$ which depends on the Sobolev constant of $g_{\delta}$ such that if $||b_{\d}||_{L^{\frac{3}{2}}(\ti M \setminus F, g_{\d})}<d_0$,  then a positive solution exists by Fredholm alternative.  Since $||B_{\delta-}||_{L^{\frac{3}{2}}(\ti M \setminus F,  g_{\d})} \to 0$ as $\d\to 0$, the existence of solution is guaranteed. Moreover,  by Lemma \ref{MollifiedIDS}, $d_0$ can be chosen independent of $\d$.  While  it can further be shown that $w_{\d}\in C^2(\ti M \setminus F)$ with the asymptote $w_{\d}=\frac{A_{\d}}{|x|}+O^2_{\d}(|x|^{-2})$ if there exists a bound on $||w_{\d}||_{L^6(\ti M\setminus F)}$. Multiply $w_\d$  to the PDE and integrate by parts, apply Holder inequality and Young inequality, together with Sobolev inequality, we get

\be
\begin{split}\label{L6Estimate}
&\frac{8}{C_{s}}\left( \int_{\MF} w_{\d}^6 \, dV_{\d} \right)^{\frac{1}{3}}\\
\leq & 8 \int_{\MF} |\Na^{\d}w_{\d}|_{\d}^2 \, dV_{\d}\\
\leq & \left( \int_{\MF} b_{\d}^{\frac{3}{2}} \, dV_{\d} \right)^{\frac{2}{3}} \left( \int_{\MF} w_{\d}^6 \, dV_{\d}\right)^{\frac{1}{3}} + \left(\int_{\MF} b_{\d}^{\frac{6}{5}} \, dV_{\d} \right)^{\frac{5}{6}} \left( \int_{\MF} w_{\d}^6 \, dV_{\d}\right)^{\frac{1}{6}} \\
\leq & \left(\int_{\MF} B_{\d-}^{\frac{3}{2}} \, dV_{\d} \right)^{\frac{2}{3}} \left( \int_{\MF} w_{\d}^6 \, dV_{\d}\right)^{\frac{1}{3}}
+\left( \int_{\MF} K_{\d}^{\frac{6}{2}} \, dV_{\d} \right)^{\frac{2}{3}} \left( \int_{\MF} w_{\d}^6 \, dV_{\d}\right)^{\frac{1}{3}} \\
&+ \left(\int_{\MF} B_{\d-}^{\frac{6}{5}} \, dV_{\d} \right)^{\frac{5}{6}} \left( \int_{\MF} w_{\d}^6 \, dV_{\d}\right)^{\frac{1}{6}}
+ \left(\int_{\MF} K_{\d}^{\frac{12}{5}} \, dV_{\d} \right)^{\frac{5}{6}} \left ( \int_{\MF} w_{\d}^6 \, dV_{\d}\right)^{\frac{1}{6}} \\
\leq & \, C(\d|\S|)^{\frac{2}{3}} \left( \int_{\MF} w_{\d}^6 \, dV_{\d}\right)^{\frac{1}{3}}
+d_1 \left( \int_{\MF} w_{\d}^6 \, dV_{\d}\right)^{\frac{1}{3}} \\
&+\frac{C(\d|\S|)^{\frac{5}{3}}}{4Q}\, +Q\left( \int_{\MF} w_{\d}^6 \, dV_{\d}\right)^{\frac{1}{3}}
+\frac{d_2^2}{4Q}+ Q\left( \int_{\MF} w_{\d}^6 \, dV_{\d}\right)^{\frac{1}{3}},
\end{split}
\ee
where $C_{s}>0$ is the Sobolev constant which can be chosen independent of $\d$, so are $d_1$, $d_2$ by Lemma \ref{MollifiedIDS}, while $Q>0$ is to be determined.   A bound on $L^6$ norm of $w_\d$ follows if
$$\frac{1}{8}\left( C(\d|\S|)^{\frac{2}{3}}+d_1 +2Q \right)\leq \frac{1}{2C_{s}}.$$ As only the case $\d$ being small is concerned, we can pick $Q$ sufficiently small to ensure that there exists $d_1>0$ satisfying the inequality.  

We can then verify the positivity of $R_{\hat \d}$, 
\be
\begin{split}
R_{\hat \d}=&\, u_{\d}^{-5}\left( R_{\d}u_{\d}-8\Lp_{\d}u_{\d} \right)\\
=&\, u_{\d}^{-5}\left( (B_{\d}+|k_{\d}|_{\d}^2-K_{\d}^2)u_{\d} + (B_{\d-}+K_{\d}^2)u_{\d} \right) \\
\geq &\, u_{\d}^{-5}\left( |k_{\d}|_{\d}^2u_{\d} \right) \\
\geq & 0.\\
\end{split}
\ee

We can determine $d_2$ now.  By the asymptotics of $u_{\d}$, we have, 
\be\label{massforlimit}
m_{ADM}(\hat g_{\delta})=m_{ADM}(g_{\delta})+2A_{\delta}=m_{ADM}(g_+)+2A_{\delta}, 
\ee
and 
\be
\begin{split}
0\leq A_{\d}=&\frac{1}{32\pi}\int_{\MF}b_{\delta}u_{\d}\, dV_{\delta} \\
\leq & \frac{C(\d|\S|)^{\frac{5}{6}}}{32\pi}  \left( \int_{\O_{\d}\setminus F} u_{\d}^6 \, dV_{\d}\right)^{\frac{1}{6}}
+  \frac{d_2}{32\pi}  \left ( \int_{\O_{\d}\setminus F} u_{\d}^6 \, dV_{\d}\right)^{\frac{1}{6}}.
\end{split}
\ee
By \eqref{L6Estimate} and Lemma \ref{MollifiedIDS}, we can see for all sufficiently small $\d$,  \\
$||w_{\d}||_{L^6(\MF, g_{\d})}$ is uniformly bounded by a term involving $d_2$. 
Hence $||u_{\d}||_{L^6(\O_{\d}\setminus F, g_{\d})}$ is uniformly bounded.  Altogether, $A_{\d}\leq C(d_2+\delta)$ for some $C>0$. Therefore, we can choose $d_2>0$ such that for all small $\d$, $A_{\delta}$ is smaller than some constant $C(|\omega|)>0$
which is to be determined.  

Now, by \eqref{BYmonotone}, 
\be
\begin{split}
\MW(\S)=&\frac{1}{8\pi }\int_\S H_0-(H-|\omega|) \, dA_g\\
=&\frac{1}{8\pi }\int_\S \left(  H_0-H \right) \, dA_g + \frac{1}{8\pi }\int_\S |\omega| \, dA_g\\
\geq & m_{ADM}(g_+) + \frac{1}{8\pi }\int_\S |\omega| \, dA_g. 
\end{split}
\ee
If we choose sufficiently small $d_2>0$ such that $\o\lim_{\d\to0}\, A_{\d} \leq C(|\omega|):= \frac{1}{16\pi }\int_\S |\omega| \, dA_g$. 
Then, by boundedness, we can take $\o \lim_{\d \to0}$ on equation \eqref{massforlimit}. To sum up, we have 
\be
\begin{split}
\MW(\S)\geq \o \lim_{\d\to 0}\, m_{ADM}(\hat g_{\d}).\\
\end{split}
\ee
\end{proof} 

Finally, to prove Theorem \ref{PI}, it suffices to show $\o \lim_{\d\to 0}\,m_{ADM}(\hat g_{\d})\geq m_H(\p E)$. 
We can follow the argument of Section 3 of \cite{ST3}. Let $E$ be a connected minimising hull in $\O$ where $E\subset \subset \O$ with $C^{1,1}$ boundary. As $(\Omega, g, k)$ is admissible, by Corollary \ref{qm}, $\MW(\S)\geq0$. Hence, it suffices to prove for the case $m_H(\p E)>0$, in particular, we have $|\p E|_g>0$. 

\

Let $\theta>0$, we can find a connected set $F\supset E$ with smooth boundary $\p F$ such that $F\subset\subset \O$ satisfies the following.  
\begin{enumerate}
\item $|\p E|_g-\theta\leq |\p F|_g \leq |\p E|_g+\theta$, 
\item $m_H(\p E)\leq m_H(\p F)+\theta$,
\item $m_H(\p F)>0$.
\end{enumerate}

Let $\eps>0$, by Lemma \ref{MollifiedIDS}, 
there exists $\d_0\in(0, \eps)$ such that for all $0<\d<\d_0$, 
\be \label{metc}
(1-\eps) \ti g \leq g_{\d} \leq(1+\eps) \ti g.
\ee

Let $\d\in (0,\d_0)$. By the properties of the Bartnik-Shi-Tam extension constructed, as pointed out by \cite{ST3} item (vii)  in their proof of Theorem 3.1, and $||w_{\delta}||_{C^1}\to 0$ as $|x|\to\infty$, we have the existence of $F'$ which is the strictly minimising hull of $F$ in $(\ti M, \hat g_{\d})$. Moreover, $F'$ is precompact  in $\ti M$ and has connected $C^{1,1}$ boundary $\p F'$. By equation (3.9) in \cite{ST3}, we have 
\be \label{area}
|\p F'|_{\ti g} >|\p E|_g. 
\ee 

Then, 
\be
\begin{split}
&m_{ADM}(\hat g_{\d})\\ 
\geq &\sqrt{\frac{|\p F'|_{\hat \d}}{16\pi}} \left( 1-\frac{1}{16\pi}\int_{\p F'} H_{\hat \d}^2 dA_{\hat \d} \right)\\
\geq & \sqrt{\frac{|\p F'|_{\hat \d}}{|\p F|_{g}}} \sqrt{\frac{|\p F|_{g}}{16\pi}} \left( 1-\frac{1}{16\pi}\int_{\p F} H_{\hat \d}^2 dA_{\hat \d}  \right)\\
= & \sqrt{\frac{|\p F'|_{\hat \d}}{|\p F|_{g}}} \sqrt{\frac{|\p F|_{g}}{16\pi}} \left( 1-\frac{1}{16\pi}\int_{\p F} H_{g}^2 dA_{g}  \right)\\
= & \sqrt{\frac{|\p F'|_{\hat \d}}{|\p F|_{g}}} m_H(\p F)\\
\geq & \sqrt{\frac{|\p F'|_{\hat \d}}{|\p F|_{g}}} m_H(\p E)-\sqrt{\frac{|\p F'|_{\hat \d}}{|\p F|_{g}}} \theta \\
\geq & \sqrt{\frac{|\p F'|_{\d}}{|\p F|_{g}}} m_H(\p E)-\sqrt{\frac{|\p F|_{\hat \d}}{|\p F|_{g}}} \theta \\
\geq & \sqrt{\frac{(1-\eps)|\p F'|_{\ti g}}{|\p F|_{g}}}m_H(\p E)- \max_{\p F}|u_{\d}^2|  \theta \\
\geq & \sqrt{\frac{(1-\eps)|\p E|_{g}}{|\p E|_{g}+\theta}}m_H(\p E)- \max_{\Omega}|u_{\d}^2| \theta.
\end{split}
\ee
The second line follows from the Geroch monotonicity of Hawking mass along inverse mean curvature flow (\cite{HI} Formula 5.8 and Lemma 7.4, \cite{Geroch}). The third line holds because $H_{\hat \d}=0$ on $\p F'\setminus \p F$ and the mean curvatures of $\p F'$ and $\p F$ agrees a.e. on $\p F' \cap \p F$. The fourth line comes from the fact that conformal changes of the mean curvature and the volume form cancel each other due to the Neumann conditions of the conformal factor. The seventh line is due to $u_{\d}\geq 1$ and by the choice of $F'$. The remaining follows from \eqref{metc},  \eqref{area} and Lemma \ref{MollifiedIDS}. 
Let $\theta\to 0$, and then $\eps\to 0$, i.e. $\d\to0$, the result follows.
\end{proof}
\begin{rem}
If $\O$ is a maximal slice, then as shown in the proof of Corollary \ref{RM}, the dominant energy condition implies positive scalar curvature. Hence, the topological assumption item (2) can be omitted. 
\end{rem}
As observed from the proof above, we have actually shown a more general statement.  
\begin{cor} 
Let $(\O^3,g,k)$ satisfy items (1) to (4), and have $d_1>0$ in item (6) of the admissibility condition. If $H>0$, then for all $\eps>0$, there exists $d_2=d_2(\eps)>0$ such that if $||K^2||_{L^{\frac{6}{5}}(\O)}<d_2$, then
\be
m_{BY}(\S)+\eps \geq m_H(\p E).
\ee
\end{cor}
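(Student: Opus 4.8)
The plan is to run the proof of Theorem \ref{PI} essentially verbatim, noting that the hypothesis $H>|\omega|$ of the admissibility condition is used there only (i) to get $\MW(\S)\ge 0$, which reduces matters to the case $m_H(\p E)>0$, and (ii) to provide the room $\frac{1}{16\pi}\int_\S|\omega|$ in Lemma \ref{conf} that absorbs the ADM correction coming from the conformal deformation. Here (ii) is replaced by the prescribed $\eps$, (i) by the Riemannian positive mass theorem applied to the conformally deformed glued manifold, and the boundary function $u=\frac{H_0}{H-|\omega|}$ of Corollary \ref{W} by $u=\frac{H_0}{H}$, which is well defined since $H>0$. Fix a connected minimising hull $E\subset\subset\O$ with $C^{1,1}$ boundary.

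First I would build the Shi--Tam extension $(M_+,g_+,k_+=0)$ of $(\O,g,k)$ from the boundary function $u=H_0/H$; by \cite{ST} it is asymptotically flat with $R_{g_+}=0$, has $H_+=H$ on $\S$, and satisfies the Brown--York monotonicity
\be
m_{ADM}(g_+)\le m_{BY}(\S)=\frac{1}{8\pi}\int_\S (H_0-H)\,dA_g .
\ee
Glue to get $(\ti M,\ti g,\ti k)=(\O\cup_\S M_+,(g,g_+),(k,0))$, a complete, one-ended, asymptotically flat initial data set with Lipschitz metric; mollify along the signed distance to $\S$ as in Lemma \ref{MollifiedIDS} — where the dominant energy condition of item (3) controls $B_{\d-}$ — and carry out the conformal deformation of Lemma \ref{conf}, with $F$ as specified below. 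The existence of $u_\d\ge 1$ with $\hat g_\d=u_\d^4 g_\d$ of non-negative scalar curvature, and the estimate \eqref{L6Estimate}, go through unchanged; $d_1$ stays fixed, since it depends only on the Sobolev constant of $(\ti M,\ti g)$, which is determined by $(\O,g)$ and $(\S,\gamma,H)$ and not by the rest of $k$. Since the ADM correction satisfies $0\le A_\d=\frac{1}{32\pi}\int_{\MF}b_\d u_\d\,dV_\d\le C(d_2+\d)$ with $C$ independent of $\d$ (and of $d_2$ once $d_2$ is small), I would choose $d_2=d_2(\eps)>0$ so that $A_\d\le \eps/2$ for all small $\d$; then, using $m_{ADM}(\hat g_\d)=m_{ADM}(g_+)+2A_\d$ and the monotonicity above,
\be
m_{ADM}(\hat g_\d)\le m_{BY}(\S)+\eps \qquad\text{for all small }\d .
\ee

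If $m_H(\p E)\le 0$, take $F=\emptyset$ (no excision, no Neumann condition): then $(\ti M,\hat g_\d)$ is a complete smooth asymptotically flat manifold with $R_{\hat g_\d}\ge 0$, so $m_{ADM}(g_+)=m_{ADM}(\hat g_\d)-2A_\d\ge -\eps$ by the positive mass theorem (\cite{SY1}), hence $m_{BY}(\S)+\eps\ge m_{ADM}(g_+)+\eps\ge 0\ge m_H(\p E)$. If $m_H(\p E)>0$ (so $|\p E|_g>0$), take $F\supset E$ smooth with $F\subset\subset\O$, and reproduce the inverse-mean-curvature-flow argument of Section 3 of \cite{ST3} exactly as in the last part of the proof of Theorem \ref{PI}: approximate $E$ by such an $F$, connected, with $|\p F|_g$ and $m_H(\p F)$ close to $|\p E|_g$ and $m_H(\p E)$ respectively and $m_H(\p F)>0$; pass to the strictly minimising hull $F'$ of $F$ in $(\ti M,\hat g_\d)$, precompact with connected $C^{1,1}$ boundary and $|\p F'|_{\ti g}>|\p E|_g$; and combine Geroch monotonicity of the Hawking mass under IMCF, the vanishing of $H_{\hat g_\d}$ on $\p F'\setminus\p F$, and the cancellation of conformal factors in $H^2\,dA$ on $\p F$ (from $\p_\nu u_\d=0$ there) to obtain $m_{ADM}(\hat g_\d)\ge \sqrt{(1-\eps')|\p E|_g/(|\p E|_g+\theta)}\;m_H(\p E)-\max_{\O}u_\d^2\,\theta$, where $\eps'$ controls $g_\d$ versus $\ti g$ and $\theta$ the approximation. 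Letting $\theta\to 0$ and then $\eps'\to 0$ (equivalently $\d\to 0$) gives $\o\lim_{\d\to 0}m_{ADM}(\hat g_\d)\ge m_H(\p E)$, which together with the previous display yields $m_{BY}(\S)+\eps\ge m_H(\p E)$.

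The main obstacle is the bookkeeping of constants and their uniformity in $\d$: $d_1$ must be fixed first from the (fixed) Sobolev geometry of $(\ti M,\ti g)$; then the $L^6$ bound on $w_\d$ from \eqref{L6Estimate} must be made uniform in $\d$ via the uniform Sobolev constant and the uniform bounds of Lemma \ref{MollifiedIDS}, so that $A_\d\le C(d_2+\d)$ with $C$ independent of $\d$; and only then can $d_2=d_2(\eps)$ be chosen to push the loss below $\eps$. A secondary point is verifying, as above, that dropping $H>|\omega|\not\equiv 0$ does not break the case $m_H(\p E)\le 0$ — here the $F=\emptyset$ run of the conformal deformation together with the Riemannian positive mass theorem stands in for the earlier appeal to Corollary \ref{qm}.
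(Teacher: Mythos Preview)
Your proposal is correct and follows essentially the same route the paper intends: the paper's proof is the single remark ``As observed from the proof above, we have actually shown a more general statement,'' and you have spelled out exactly that observation --- running the Shi--Tam extension with $u=H_0/H$, the mollification and conformal deformation of Lemma \ref{conf}, and the IMCF/Geroch argument verbatim, while replacing the $\frac{1}{16\pi}\int_\S|\omega|$ margin by the prescribed $\eps$ in the choice of $d_2$. Your explicit treatment of the case $m_H(\p E)\le 0$ via $F=\emptyset$ and the Riemannian positive mass theorem is a clean way to bypass the appeal to Corollary \ref{qm} (which needed item (5)); the paper's one-line justification glosses over this point, but your fix is the natural one and is in the same spirit.
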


As a corollary of Theorem \ref{PI}, we have the following. 
\begin{cor}\label{Pen}
Let $(\O^3,g,k)$ be admissible. Suppose that $S$ is a outward minimising surface in $\Omega$, then 
$$\MW(\S)\geq\sqrt{\frac{|S|}{16\pi}}.$$
\end{cor}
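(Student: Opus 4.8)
The plan is to obtain Corollary \ref{Pen} as an essentially immediate consequence of the comparison Theorem \ref{PI}. Since $(\Omega,g,k)$ is admissible, Corollary \ref{qm} already gives $\MW(\S)\geq 0$, so it suffices to treat $S\neq\emptyset$; we may also assume $S$ is connected, which is the setting covered by Theorem \ref{PI}. As $\Omega$ is simply connected (admissibility item (1)), the closed connected surface $S$ in the interior of $\Omega$ bounds an open region, which we call $E$, with $\p E=S$ and $E\subset\subset\O$. The hypothesis that $S$ is outward minimising is precisely the assertion that $E$ is a minimising hull in $\O$; moreover, by admissibility item (5) the boundary $\S=\SS$ has positive mean curvature ($H>|\omega|\geq 0$), so the remark recalled after the definition of minimising hulls applies and confirms that $E$ (more precisely the strictly minimising hull at hand) is precompact in $\O$. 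Finally, $\p E=S$ has the $C^{1,1}$ regularity — indeed smoothness — required by Theorem \ref{PI}, since $S$ is a minimal surface. Thus $E$ satisfies all the hypotheses of Theorem \ref{PI}.

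Applying Theorem \ref{PI} to $E$ then gives
\[
\MW(\S)\ \geq\ m_H(\p E)\ =\ \sqrt{\frac{|S|}{16\pi}}\left(1-\frac{1}{16\pi}\int_{S}H^{2}\,dA\right),
\]
and because $S$ is minimal, $H\equiv 0$ along $S$, so the parenthesised factor equals $1$ and we conclude $\MW(\S)\geq\sqrt{|S|/16\pi}$, which is the claim.

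I do not expect any genuine analytic obstacle at this stage: the hard analysis is entirely inside the proof of Theorem \ref{PI} (the conformal correction of Lemma \ref{conf} and the Geroch monotonicity of the Hawking mass along inverse mean curvature flow). The only points that warrant a line of justification are the geometric-measure-theoretic bookkeeping — that the region bounded by $S$ is a connected minimising hull, precompact in $\O$, with $C^{1,1}$ boundary — and the remark that it is exactly the minimality of $S$ which turns $m_H(S)$ from a mere lower bound into the exact value $\sqrt{|S|/16\pi}$.
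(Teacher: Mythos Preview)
Your proposal is correct and follows essentially the same route as the paper's own proof: identify the region $E$ bounded by $S$ as a connected minimising hull, invoke Theorem \ref{PI} to get $\MW(\S)\geq m_H(S)$, and then use $H\equiv 0$ on $S$ to reduce $m_H(S)$ to $\sqrt{|S|/16\pi}$. The paper's argument is terser (two lines), while you spell out why $E$ meets the hypotheses of Theorem \ref{PI}; both rely on reading ``outward minimising surface'' as an outward minimising \emph{minimal} surface, which is what makes the Hawking mass collapse to the area term.
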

\begin{proof}
Since an outward minimising surface is a boundary of a connected minimising hull.  Then by Theorem \ref{PI}, we have, 
$$\MW(\S)\geq m_H(S)=\sqrt{\frac{|S|}{16\pi}}.$$
\end{proof}

\subsection{Detection of minimal surfaces}
In this section, following the approach by \cite{ST3} and \cite{ALY}, we are going to see how $\MW(\S)$, the Hawking mass 
and the Shi-Tam mass help study the internal geometry of an admissible initial data set, in terms of existence and non-existence of minimal surfaces. 
\begin{prop}(cf. \cite{APM} Theorem 2.3, \cite{ALY} Proposition 1.7) Let $(\Omega,g,k)$ be admissible. Suppose that the sectional curvatures of $\Omega$ are bounded above by some constant $C^2$, where $C>0$. If $\MW(\S)<\frac{1}{2C}$, then there is no minimal surface in $\Omega$ and $\Omega$ is diffeomorphic to a ball in $\R^3$. 
\end{prop}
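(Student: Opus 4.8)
The plan is to argue by contradiction: if $\Omega$ contained a closed two-sided minimal surface, I would extract from it an outward-minimising minimal $2$-sphere whose area is pinched from below by the sectional curvature bound and from above by Corollary \ref{Pen}, and these two estimates turn out to be incompatible with the hypothesis $\MW(\S)<\tfrac{1}{2C}$.

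First I would record what admissibility already supplies near the boundary: since $(\Omega,g,k)$ is admissible, $\S=\SS$ satisfies $H>|\omega|\ge 0$, so $\S$ is strictly mean-convex, and $\kappa>0$ together with Gauss--Bonnet forces $\S\cong S^2$. Assume now that $\Omega$ contains a closed minimal surface. By the theory of outermost minimal surfaces and minimising hulls in manifolds with strictly mean-convex boundary (cf. \cite{HI} and the construction used in \cite{APM} Theorem 2.3), there is a smooth, embedded, stable, outward-minimising closed minimal surface $S\subset\subset\Omega$, each component of which bounds a compact ``trapped'' region on the side away from $\S$. The essential topological input, which is where the simple connectivity of $\Omega$ in Definition \ref{admIDS}(1) enters (via the argument of \cite{APM} Theorem 2.3), is that every component of $S$ is a $2$-sphere.

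Next I would feed this into the curvature hypothesis. On any component $S_0$ of $S$ the second fundamental form is trace-free by minimality, so $\det A_{S_0}\le 0$, and the Gauss equation $K_{S_0}=\sec_g(TS_0)+\det A_{S_0}$ together with $\sec_g\le C^2$ gives $K_{S_0}\le C^2$ pointwise. Integrating and applying Gauss--Bonnet ($\chi(S_0)=2$) yields $|S_0|\ge 4\pi/C^{2}$, hence $|S|\ge 4\pi/C^{2}$. Since $S$ is outward minimising, Corollary \ref{Pen} (equivalently Theorem \ref{PI} applied to a component, using $H\equiv 0$ on $S$) then gives
$$\MW(\S)\ \ge\ \sqrt{\frac{|S|}{16\pi}}\ \ge\ \sqrt{\frac{4\pi/C^{2}}{16\pi}}\ =\ \frac{1}{2C},$$
contradicting the hypothesis. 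Therefore $\Omega$ admits no closed minimal surface; and since $\Omega$ is simply connected with strictly mean-convex boundary and now contains no closed minimal surface, the argument of \cite{APM} Theorem 2.3 shows $\Omega$ is diffeomorphic to a ball in $\R^3$.

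The main obstacle is the first step: producing the stable, outward-minimising minimal surface and, above all, controlling its topology, i.e. verifying that in a simply connected admissible $\Omega$ the outermost minimal surface is a disjoint union of $2$-spheres. This is the geometric-measure-theory input I would import wholesale from \cite{APM} (and, for existence of the outermost surface, from \cite{HI}); once it is in hand, the sectional curvature hypothesis enters only through the elementary Gauss-equation estimate, and the mass comparison is precisely Corollary \ref{Pen} / Theorem \ref{PI}, so nothing else is delicate.
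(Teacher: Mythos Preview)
Your argument is correct and follows essentially the same route as the paper: obtain an outward-minimising minimal sphere, bound its Gauss curvature above by $C^2$ via the Gauss equation and minimality, use Gauss--Bonnet to get $|S|\ge 4\pi/C^2$, and then invoke Corollary \ref{Pen} to reach a contradiction. The only cosmetic difference is that the paper cites Meeks--Simon--Yau \cite{MSY} (Theorem~1') directly for the existence of the outward-minimising minimal sphere when $\Omega$ is not a ball, rather than routing through \cite{APM}/\cite{HI}.
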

\begin{proof}
The proof follows from \cite{Cor} Theorem 2.3. Assume $\Omega$ is not diffeomorphic to a ball in $\R^3$. Since $\S$ is mean convex, by Theorem 1' in \cite{MSY}, there exist an outward minimizing minimal sphere $S$ in $\O$. For a given point $p\in S$, let$ \{ e_1, e_2 \}$ be an orthonormal basis of $T_pS$ on which the 2nd fundamental form of $S$ is diagonal.  Denote
the principal curvatures at $p$ by $\kappa_i$. The Gauss equation implies 
\be
\frac{R_{S}}{2}=Rm_{S}(e_1,e_2,e_2,e_1)=Rm_{\Omega}(e_1,e_2,e_2,e_1)+ \kappa_1 \kappa_2.
\ee
Since $S$ is minimal, $\kappa_1 \kappa_2\leq 0$ at $p$ and therefore we have
\be
\frac{R_S}{2}\leq C^2.
\ee
By Gauss–Bonnet Theorem and the assumption, we have
\be
4\pi= \int_{S} \frac{R_S}{2} \leq |S|C^2 < \frac{|S|}{4\MW(\S)^2}.
\ee
Together with Corollary \ref{Pen}, we have, 
\be
\MW(\S)\geq\sqrt{\frac{|S|}{16\pi}}>\MW(\S),
\ee
which is a contradiction. Therefore,  $\O$ has no minimal surfaces and by \cite{MSY}, $\Omega$ is diffeomorphic to a ball in $\R^3$. 
\end{proof}

Recall the following definition and lemma (\cite{ST3} Lemma 3.6). 
\begin{defn}
A $C^2$ surface $V\subset (\O,g)$ is called an isoperimetric surface if its area is no more than any other
$C^2$ surface enclosing the same volume.
\end{defn}

\begin{lma}\label{MMSMH}
Let $(\O, g)$ be a compact Riemannian 3-manifold with smooth mean convex boundary $\p \O$ such that Gaussian curvature of $\p \O$ is positive. 
Suppose $E\subset \subset \Omega$ such that $\p E$ is a isoperimetric surface. Then either $\O$ contains a outward minimising minimal surface or $E$ is a minimising hull. 
\end{lma}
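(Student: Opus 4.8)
The plan is to argue by contradiction: assuming $E$ is not a minimising hull in $\Omega$, I will produce an outward minimising minimal surface lying in the interior of $\Omega$. First I would pass to the strictly minimising hull $E'$ of $E$; by the facts recalled just before the definition of the Hawking mass --- which use only that $\p\Omega$ is mean convex and that $E\subset\subset\Omega$ --- the set $E'$ exists, is precompact in $\Omega$, and has $C^{1,1}$ boundary $\p E'$ which is outward minimising. Since an intersection of strictly minimising hulls is again one, the failure of $E$ to be a minimising hull forces $E'\supsetneq E$ up to a set of measure zero, so the free portion $S:=\p E'\setminus\overline{\p E}$ is nonempty. On $S$ the set $E'$ is a local perimeter minimiser with no active constraint --- it does not meet $\p E$, and, since $E'\subset\subset\Omega$, it does not meet $\p\Omega$ --- so by De~Giorgi's interior regularity (and the emptiness of the singular set in ambient dimension $3$) $S$ is a smooth, embedded, two-sided minimal surface in the interior of $\Omega$.

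Next I would use that $\p E$, being a $C^2$ isoperimetric surface, is a \emph{global} minimiser of area for its enclosed volume $|E|$; hence it has constant mean curvature $\lambda$ with respect to its outward normal and satisfies $|\p E|=I(|E|)$, where $I$ is the isoperimetric profile of $\Omega$. Along the contact set $\p E'\cap\p E$ the Euler--Lagrange inequality for the obstacle problem ``minimise perimeter among competitors containing $E$'' forces $\lambda\geq 0$. If $\lambda<0$ the contact set is empty, so $\p E'=S$ is a closed embedded minimal surface, which is outward minimising. If $\lambda=0$ then $\p E'$ has vanishing mean curvature almost everywhere ($0$ on $S$ and $\lambda=0$ on the contact set), so, being $C^{1,1}$ and a weak solution of the minimal surface equation, it is smooth, hence a closed minimal surface, again outward minimising. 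In either case $\Omega$ contains an outward minimising minimal surface.

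The remaining case $\lambda>0$ is the main obstacle, since then $\p E'$ carries positive mean curvature on a possibly large contact set and need not be minimal. Here I would exploit globality of $\p E$: the failure of $E$ to be a minimising hull produces (after the standard reduction, valid when $\p\Omega$ is mean convex, to a competitor $F\supseteq E$ with $F\setminus E\subset\subset\Omega$ whose total perimeter beats that of $E$) a set $F$ with $|F|>|E|$ and $|\p^* F|<|\p E|=I(|E|)$, so $I$ is not non-decreasing on $[\,|E|,|F|\,]$ and must strictly decrease on a subinterval. Picking there a point of differentiability $v_1$ with $I'(v_1)<0$, any isoperimetric region $G_0$ of volume $v_1$ has boundary of negative constant mean curvature (with respect to the outward normal, $I'(v_1)$ being exactly that mean curvature); the Euler--Lagrange inequality then shows the strictly minimising hull $G_0'$ of $G_0$ has empty contact set, so $\p G_0'$ is a smooth closed embedded minimal surface, outward minimising by construction. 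The points demanding the most care are this profile/differentiability step and, throughout, the use of the maximum principle against the mean convex (and positively curved) boundary $\p\Omega$ --- shrinking $\Omega$ to a slightly retracted subdomain if necessary --- to keep $G_0$ and all of the minimal surfaces produced in the interior of $\Omega$, so that the minimising hull constructions genuinely apply.
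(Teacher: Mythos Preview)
The paper does not prove this lemma; it is quoted verbatim as Lemma 3.6 of \cite{ST3} (Shi--Tam), so there is no in-paper argument to compare against. Your task is therefore to reproduce (or replace) the Shi--Tam proof.

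Your overall strategy --- pass to the strictly minimising hull $E'$, use that the free part of $\partial E'$ is minimal, and split according to the sign of the constant mean curvature $\lambda$ of the isoperimetric surface $\partial E$ --- is sound, and the cases $\lambda<0$ and $\lambda=0$ are handled correctly: the obstacle inequality on the contact set forces $\lambda\geq 0$ there, so $\lambda<0$ gives an empty contact set and $\partial E'$ is a closed, smooth, outward-minimising minimal surface; when $\lambda=0$, $\partial E'$ is $C^{1,1}$ with weak mean curvature zero everywhere, hence smooth and minimal by elliptic regularity. These parts are fine.

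The case $\lambda>0$ is where the argument is incomplete. Your idea --- observe that the failure of $E$ to be a minimising hull forces the isoperimetric profile $I$ to strictly decrease somewhere past $|E|$, pick $v_1$ with $I'(v_1)<0$, take an isoperimetric region $G_0$ at volume $v_1$, and then run the $\lambda<0$ argument on $G_0$ --- is correct in spirit, but two steps need justification you have not supplied. First, an isoperimetric region $G_0$ in a compact manifold with boundary need not satisfy $G_0\subset\subset\Omega$: its boundary may meet $\partial\Omega$ orthogonally as a free-boundary CMC surface, and mean convexity of $\partial\Omega$ alone does not rule this out (it rules out \emph{tangential} contact of a minimal or $H\le 0$ surface, not free-boundary contact of a CMC surface). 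Second, the strictly-minimising-hull construction you invoke (from the paragraph preceding the definition of Hawking mass) explicitly requires the seed set to be precompact in $\Omega$, so you cannot feed it $G_0$ until the first point is settled. Your suggestion of ``shrinking $\Omega$ to a slightly retracted subdomain'' does not obviously help, since $G_0$ may then touch the new boundary. One clean way to close this gap is to bypass the profile and instead minimise perimeter directly over $\{G\subset\Omega:|G|\ge |E|\}$; the minimiser $G^\ast$ has $|\partial G^\ast|\le |\partial E'|<|\partial E|=I(|E|)$, hence $|G^\ast|>|E|$, so the volume constraint is inactive and $\partial G^\ast$ is minimal, while $G^\ast$ is automatically outward minimising. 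Mean convexity of $\partial\Omega$ then genuinely acts as a barrier (now against a \emph{minimal} surface), forcing $\partial G^\ast\subset\operatorname{int}\Omega$. This is closer to how Shi--Tam argue and avoids the case split altogether.
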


From the lemma, we have the following. 
\begin{prop}
Let $(\Omega,g,k)$ be admissible. If there is an
isoperimetric surface $V\subset \Omega$ with $m_H(V)> \MW(\S)$, then there is a separating outward minimising minimal sphere in $\Omega$.
\end{prop}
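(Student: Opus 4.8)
The plan is to combine Lemma~\ref{MMSMH} with the comparison theorem (Theorem~\ref{PI}) and argue by contradiction. First I would suppose, for contradiction, that $\Omega$ contains no separating outward minimising minimal sphere. Since $(\Omega,g,k)$ is admissible, $\S=\SS$ is smooth, connected, mean convex ($H>|\omega|\geq 0$) and has positive Gaussian curvature, so the hypotheses of Lemma~\ref{MMSMH} are met with the given isoperimetric surface $V=\p E$. By Lemma~\ref{MMSMH}, either $\Omega$ contains an outward minimising minimal surface or $E$ is a minimising hull. An outward minimising minimal surface in a region with connected mean convex boundary is (or contains a component which is) a separating outward minimising minimal sphere by the standard results of \cite{MSY}; this would contradict our assumption. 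Hence $E$ must be a minimising hull.

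Next I would like to apply Theorem~\ref{PI} to this minimising hull $E$. The one technical point is the regularity of $\p E$: Theorem~\ref{PI} is stated for connected minimising hulls $E\subset\subset\Omega$ with $C^{1,1}$ boundary, whereas here $E$ comes from an isoperimetric surface $V$ which is $C^2$ and hence certainly $C^{1,1}$; moreover an isoperimetric region in a smooth manifold is automatically of locally finite perimeter and, by the regularity theory for isoperimetric surfaces in dimension $3$, has smooth boundary away from a negligible set, so the statement of Theorem~\ref{PI} applies verbatim (connectedness of $E$ may be arranged by passing to the component of $E$ bounded by $V$, or is part of the hypothesis that $V$ is a single isoperimetric surface). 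Applying Theorem~\ref{PI} then yields
$$
\MW(\S)\geq m_H(\p E)=m_H(V).
$$
But this directly contradicts the hypothesis $m_H(V)>\MW(\S)$.

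Therefore the assumption was false, and $\Omega$ must contain a separating outward minimising minimal sphere, as claimed. The main obstacle in this argument is purely bookkeeping: checking that the isoperimetric surface $V$ furnishes a minimising hull $E$ of the regularity required by Theorem~\ref{PI}, i.e. verifying that Lemma~\ref{MMSMH} applies and that its ``$E$ is a minimising hull'' alternative delivers a set with $C^{1,1}$ (indeed $C^2$) boundary to which the comparison theorem can be fed without loss. Once that is in place the contradiction is immediate, since Theorem~\ref{PI} and the hypothesis on $m_H(V)$ are incompatible.
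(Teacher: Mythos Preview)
Your proposal is correct and follows essentially the same approach as the paper: use Lemma~\ref{MMSMH} to force $E$ to be a minimising hull (in the absence of an outward minimising minimal sphere), then invoke Theorem~\ref{PI} to obtain $\MW(\S)\geq m_H(V)$, contradicting the hypothesis. The paper's own proof is the same two-step argument phrased contrapositively and without the regularity bookkeeping you spell out.
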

\begin{proof}
From the assumption and Theorem \ref{PI}, $V$ cannot be the boundary of a minimising hull, then we can conclude the proposition by Lemma \ref{MMSMH}. 
\end{proof}

Recall the definition of the Shi-Tam mass $m_{ST}(\Omega)$ whose properties are detailed in Section 2 Theorem 2.5 in \cite{ST3}.  

\begin{defn}
Let $\O_1\subset \subset \O_2\subset \O$ such that $\O_1$ and $\O_2$ have smooth boundaries. Let $d$ denote the distance between $\O_1$ and $\p \O_2$ and $\iota $ be the infimum of the injectivity radius of points in $\{ x\in \O_2 \, |\, d(x,\p \O_2)>\frac{d}{4} \}$. Let $\mathcal{F}_{\O_2}$ be the family of precompact connected minimising hulls with $C^2$ boundary in $\O_2$. Define 
\be
m(\O_1,\O_2)=\sup_{E\in \mathcal{F}_{\O_2},\, E\subset \O_1} m_H(\p E).
\ee
Then the Shi-Tam mass is defined by 
\be
m_{ST}(\O)=\sup \alpha_{\O_1,\O_2}m(\O_1,\O_2), 
\ee 
where the supremum is taken over all $\O_1\subset \subset \O_2\subset \O$ with smooth boundaries and 
\be
\alpha_{\O_1,\O_2}^2=\min\{ 1, \frac{2\pi \mathcal{K}^{-2}\int_0^r \tau^{-1}\sin(\mathcal{K} \tau)^2 d\tau }{|\p \O_1|} \}, 
\ee
where $r=\min\{ \frac{d}{2}, \iota \}$ and $\mathcal{K}>0$ is the upper bound of the sectional curvature of $\O_2$. 
\end{defn}

First, we have the following theorem. 
\begin{thm}\label{STW}
Let $(\O, g, k)$ be admissible. Suppose there is no outward minimising minimal sphere in $\O$, then $m_{ST}(\Omega)\leq \MW(\S)$. 
\end{thm}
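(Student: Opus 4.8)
The plan is to reduce Theorem \ref{STW} to Theorem \ref{PI} by unwinding the definition of the Shi-Tam mass and exploiting the hypothesis that $\Omega$ contains no outward minimising minimal sphere. First I would fix $\O_1 \subset\subset \O_2 \subset \O$ with smooth boundaries together with the associated normalising constant $\alpha_{\O_1,\O_2} \in (0,1]$, so that it suffices to prove
$$
\alpha_{\O_1,\O_2}\, m(\O_1,\O_2) \le \MW(\S)
$$
and then take the supremum over all such pairs. Since $\alpha_{\O_1,\O_2} \le 1$, it is in turn enough to bound $m(\O_1,\O_2)$ by $\MW(\S)$. By definition $m(\O_1,\O_2) = \sup\{ m_H(\p E) : E \in \mathcal{F}_{\O_2},\ E \subset \O_1 \}$, so I would fix an arbitrary precompact connected minimising hull $E$ in $\O_2$ with $C^2$ boundary and $E \subset \O_1 \subset\subset \O$; I must show $m_H(\p E) \le \MW(\S)$.

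The key point is that such an $E$ is, in particular, a connected minimising hull in $\O$ itself with $E \subset\subset \O$ and $C^{1,1}$ (indeed $C^2$) boundary: the minimising-hull property is tested against competitors $F$ with $F \setminus E \subset\subset \O_2 \subset \O$, and since $E$ is precompact in $\O_2$ the relevant competitors and comparison sets all stay inside $\O$, so the inequality $|\p^* E \cap W| \le |\p^* F \cap W|$ persists. Hence $E$ is exactly of the type admitted in the hypothesis of Theorem \ref{PI}, and applying that theorem gives $\MW(\S) \ge m_H(\p E)$. Taking the supremum over all admissible $E$ yields $\MW(\S) \ge m(\O_1,\O_2)$, and then over all pairs $(\O_1,\O_2)$ with the extra factor $\alpha_{\O_1,\O_2}\le 1$ yields $\MW(\S) \ge m_{ST}(\O)$, as desired.

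The main obstacle is making sure the no-minimal-surface hypothesis is actually used where it is needed. In the comparison theorem \ref{PI} the monotonicity argument along inverse mean curvature flow and the passage from $\p F$ to its strictly minimising hull $\p F'$ implicitly require that the strictly minimising hull stay away from any outward minimising minimal sphere; when $\O$ contains such a sphere the strictly minimising hull of a set may "jump" to that sphere and the area comparison \eqref{area} can fail. So in carrying out the reduction I would need to check that, because $\O$ has no outward minimising minimal sphere, every $E$ arising in the definition of $m(\O_1,\O_2)$ does have a well-behaved strictly minimising hull in $\O$ with the area monotonicity \eqref{area}, exactly as in the proof of Theorem \ref{PI}; this is precisely the role of the hypothesis, and it is the only delicate point, the rest being a bookkeeping argument about nested domains and the normalising constants $\alpha_{\O_1,\O_2}$.
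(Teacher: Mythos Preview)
Your reduction contains a genuine gap at the crucial step. You assert that a set $E$ which is a minimising hull in $\O_2$ is automatically a minimising hull in the larger domain $\O$, arguing that ``the relevant competitors and comparison sets all stay inside $\O$''. This is backwards: being a minimising hull in a \emph{larger} domain is the stronger condition, because there are \emph{more} competitors $F$ with $F\setminus E\subset\subset\O$ than with $F\setminus E\subset\subset\O_2$. The inequality $|\p^*E\cap W|\le |\p^*F\cap W|$ for competitors in $\O_2$ says nothing about competitors $F$ with $F\setminus E$ reaching into $\O\setminus\O_2$. Consequently you cannot feed $E$ directly into Theorem~\ref{PI}, and the factor $\alpha_{\O_1,\O_2}$ cannot simply be discarded via $\alpha_{\O_1,\O_2}\le 1$.

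The paper proceeds differently, and the difference is exactly where the content lies. One first observes (via \cite{MSY}, using mean convexity of $\S$ and the no-minimal-sphere hypothesis) that $\O$ is diffeomorphic to a ball. Given $E\in\mathcal{F}_{\O_2}$ with $E\subset\O_1$ and $m_H(\p E)\ge 0$, one passes to its strictly minimising hull $E'$ \emph{in $\O$}, which is a connected minimising hull in $\O$ with $C^{1,1}$ boundary; Theorem~\ref{PI} then gives $\MW(\S)\ge m_H(\p E')$. The remaining work---and this is precisely the role of the constant $\alpha_{\O_1,\O_2}$---is to compare $m_H(\p E')$ with $\alpha_{\O_1,\O_2}\,m_H(\p E)$, which is carried out in \cite{ST3} (the arguments following their equation (3.12)). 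The nested structure $\O_1\subset\subset\O_2$ and the geometric constant $\alpha_{\O_1,\O_2}$ are there exactly to control the possible loss in Hawking mass when replacing $E$ by $E'$, so they cannot be bypassed as in your proposal.
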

\begin{proof}
Assume $\O$ has no outward minimising minimal sphere, then it is diffeomorphic to a ball in $\R^3$ by Theorem 1' in \cite{MSY} as $\S$ is mean convex. Let $\O_1 \subset \subset \O_2 \subset \O$ with smooth boundaries. Let $E$ be a connected precompact minimizing hull in $\O_2$ with $C^2$ boundary such that $E \subset  \O_1$. And we can assume $m_H(\p E)\geq 0$. By $\S$ being mean convex, the strictly minimising hull $E'$ of $E$ exists. And $E'$ is connected  with $C^{1,1}$ boundary since $E$ is connected with $C^2$ boundary. 
By Theorem \ref{PI}, we can replace the equation (3.12) in the proof of Theorem 3.2 in \cite{ST3} by 
$$\MW(\S)\geq m_H(\p E').$$
The result then follows from its subsequent arguments in \cite{ST3}.  
\end{proof}
We then have the following corollary. 
\begin{cor}
Let $(\Omega,g,k)$ be admissible. If either $m_{ST}(\O)>\MW(\S)$ or $m_{ST}(\O)\geq \frac{1}{2}diam(\S)$, then there is a separating
outward minimizing minimal sphere in $\O$, where $diam(\S)$ is the diameter of $\S$ with respect to the metric induced by $g$.
\end{cor}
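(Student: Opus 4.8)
The plan is to argue by contradiction, reducing the first hypothesis directly to Theorem~\ref{STW} and the second to the purely Riemannian part of the Shi--Tam estimates.

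Suppose $\O$ contains no separating outward minimising minimal sphere. An outward minimising surface is, by definition, the boundary of a connected minimising hull, hence separates $\O$; so this hypothesis is equivalent to: $\O$ contains no outward minimising minimal sphere at all. Since admissibility gives $H>|\omega|\ge 0$ on $\S$, $\S$ is mean convex, so by \cite{MSY} (Theorem~1') the absence of an outward minimising minimal sphere forces $\O$ to be diffeomorphic to a ball in $\R^3$. In particular Theorem~\ref{STW} applies and yields $m_{ST}(\O)\le\MW(\S)$.

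Now the two hypotheses are played off against two consequences of this no-minimal-surface situation. If $m_{ST}(\O)>\MW(\S)$, this contradicts $m_{ST}(\O)\le\MW(\S)$ and we are done. If instead $m_{ST}(\O)\ge\frac{1}{2}diam(\S)$, I would invoke that $m_{ST}$, as defined in Section~\ref{Applications of W}, is built solely from the Riemannian data $(\O,g)$---minimising hulls, the Hawking masses of their boundaries, sectional curvature upper bounds, and injectivity radii, with no reference to $k$---so the geometric estimate of \cite{ST3} (in the spirit of the Schoen--Yau radius bounds) applies verbatim to $(\O,g)$: a compact $3$-manifold with mean convex boundary of positive Gaussian curvature and no outward minimising minimal sphere satisfies $m_{ST}(\O)<\frac{1}{2}diam(\p\O)$, the diameter being the intrinsic one of the boundary. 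This contradicts $m_{ST}(\O)\ge\frac{1}{2}diam(\S)$. In either case the contradiction establishes the existence of a separating outward minimising minimal sphere.

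The first case is immediate once Theorem~\ref{STW} is in hand, so the work is concentrated in the second. The main obstacle is twofold: (i) confirming that the reduction ``no separating outward minimising minimal sphere $\Rightarrow$ no outward minimising minimal sphere $\Rightarrow$ $\O$ a ball'' goes through with the admissibility hypotheses (here the key point is that ``outward minimising'' forces the sphere to bound a minimising hull, hence to separate); and (ii) checking that the Shi--Tam diameter bound $m_{ST}(\O)<\frac{1}{2}diam(\p\O)$ is available in exactly this generality, i.e.\ that admissibility supplies precisely the hypotheses ($\S$ mean convex with positive Gaussian curvature, $\O$ compact and simply connected) under which \cite{ST3} proves it in the absence of outward minimising minimal spheres.
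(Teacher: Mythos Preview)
Your first case is exactly the paper's argument: the contrapositive of Theorem~\ref{STW}. The second case, however, has a gap. You want to quote from \cite{ST3} a purely Riemannian bound $m_{ST}(\O)<\tfrac12\,diam(\S)$ valid whenever $\O$ has no outward minimising minimal sphere, on the grounds that $m_{ST}$ depends only on $(\O,g)$. But the way \cite{ST3} obtains that bound is by first proving $m_{ST}(\O)\le m_{BY}(\S)$ (their Riemannian analogue of Theorem~\ref{STW}) and then using the Minkowski integral formula to bound $m_{BY}(\S)$ by $\tfrac12\,diam(\S)$. The first step requires $R_g\ge 0$. In the admissible spacetime setting the dominant energy condition gives only $R_g+(\tr_g k)^2-|k|_g^2\ge 2|J|_g$, which does not force $R_g\ge 0$, so the black-box citation of \cite{ST3} does not apply verbatim to $(\O,g)$.

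The paper's route avoids any interior curvature hypothesis: it shows that $\MW(\S)<\tfrac12\,diam(\S)$ always holds under admissibility, using only the boundary data and the isometric embedding of $\S$ into $\R^3$. Since $H>|\omega|$ one has $\MW(\S)<\frac{1}{8\pi}\int_\S H_0$; the Minkowski integral formula (\cite{K}, Lemma~6.29) together with Gauss--Bonnet gives $\frac{1}{8\pi}\int_\S H_0\le\frac{\rho}{2}$ with $\rho$ the circumradius of the embedded image, and finally $\rho<diam(\S)$. The hypothesis $m_{ST}(\O)\ge\tfrac12\,diam(\S)$ therefore forces $m_{ST}(\O)>\MW(\S)$, reducing the second case to the first. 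Your instinct that the diameter estimate is ``purely boundary-geometric'' is correct, but the usable inequality is $\MW(\S)<\tfrac12\,diam(\S)$, not the \cite{ST3} statement for $m_{ST}$.
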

\begin{proof}
The first part is just the contrapositive of Theorem \ref{STW}. For the second part, let $\rho$ be the radius of the smallest circumscribed ball of $\S$ in $\R^3$.  Then by the definition of $\MW(\S)$, the Minkowski integral formula (P.136 Lemma 6.29 of \cite{K}) and Gauss-Bonnet Theorem, we have the following, 
\be
\MW(\S)<\frac{1}{8\pi}\int_{\S}H_0 \leq \frac{\rho}{8\pi}\int_{\S} \frac{R_\S}{2} = \frac{\rho}{2} <\frac{diam(\S)}{2}\leq m_{ST}(\O).
\ee
By Theorem \ref{STW} again, the result follows.
\end{proof}

\subsection{Detection of apparent horizons}
As discussed in Section \ref{Connections to quasilocal mass}, if $H>|tr_{\S}k|$ on $\S$, then $\MW(\S)\geq m_{LY}(\S)$. Therefore the statements in \cite{ALY} (\cite{ST3}) can also be applied to $\MW(\S)$, i.e. $\MW(\Sigma)$ can be used to determine whether an apparent horizon (MOTS) with certain properties exists in $\Omega$ by the following localised spacetime Penrose inequality.    
\begin{cor}(cf.  \cite{ALY} Proposition 1.3)
Let $(\Omega,g,k)$, where $\p \Omega$ has positive Gauss curvature, be admissible in the sense of \cite{ALY} and $S$ be a closed surface in $\Omega$. Suppose that either
\begin{enumerate}
\item $S$ is a MOTS and its projection in blow up Jang graph of $\Omega$ is outward minimizing, or
\item $S$ has an outward minimizing minimal surface projection in a Jang graph of $\Omega$,
\end{enumerate}
then
$$\MW(\S)\geq\sqrt{\frac{|S|}{16\pi}}.$$
\end{cor}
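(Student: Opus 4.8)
The plan is to deduce this from two facts already in hand: the integrated comparison $\MW(\S)\geq m_{LY}(\S)$ obtained in the subsection ``Comparison to the Liu--Yau mass'', and the localised spacetime Penrose inequality for the Liu--Yau mass proved in \cite{ALY}. No new analytic estimate is required; the content is the transitivity of these two inequalities, together with a check that the hypotheses line up.

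First I would record that the comparison $\MW(\S)\geq m_{LY}(\S)$ is applicable here. An initial data set that is admissible in the sense of \cite{ALY} is in particular compact, satisfies the dominant energy condition, and has $\S=\SS$ smooth with an untrapped-type condition on the boundary that supplies $H>|tr_\S k|$ on $\S$; by hypothesis $\S$ additionally has positive Gaussian curvature, so $\S$ admits an isometric embedding into $\R^3$ with a well-defined $H_0$. Hence the pointwise inequality $H_0-(H-|\omega|)\geq H_0-\sqrt{H^2-|tr_\S k|^2}$, integrated over $\S$, gives
$$\MW(\S)\ \geq\ m_{LY}(\S).$$

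Next I would invoke \cite{ALY} Proposition 1.3 directly: under the same admissibility hypotheses, if $S\subset\Omega$ is a closed surface such that either (1) $S$ is a MOTS whose projection in the blow-up Jang graph of $\Omega$ is outward minimising, or (2) $S$ has an outward minimising minimal surface projection in a Jang graph of $\Omega$, then the localised spacetime Penrose inequality yields
$$m_{LY}(\S)\ \geq\ \sqrt{\frac{|S|}{16\pi}}.$$
Concatenating the two displays gives $\MW(\S)\geq\sqrt{|S|/16\pi}$, which is the assertion.

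The only genuine obstacle is bookkeeping: one must confirm that ``admissible in the sense of \cite{ALY}'' (together with the stated positivity of the Gaussian curvature of $\p\O$, and noting this is a priori a slightly different list from Definition \ref{admIDS} of the present paper) supplies exactly the hypotheses needed both for the comparison with $m_{LY}$ and for \cite{ALY} Proposition 1.3 --- in particular that $H>|tr_\S k|$ (hence $m_{LY}(\S)$ is defined) is available, and that the conventions for the Jang equation, its blow-up locus, and the phrase ``outward minimising projection'' agree across the two references. Once this matching is verified, the proof is complete, since it is simply the chain $\MW(\S)\geq m_{LY}(\S)\geq\sqrt{|S|/16\pi}$.
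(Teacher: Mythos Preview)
Your proposal is correct and follows essentially the same approach as the paper: the paper states just before this corollary that since $H>|tr_\S k|$ implies $\MW(\S)\geq m_{LY}(\S)$, the localised spacetime Penrose inequality of \cite{ALY} for $m_{LY}$ transfers directly to $\MW$, which is precisely the chain $\MW(\S)\geq m_{LY}(\S)\geq\sqrt{|S|/16\pi}$ you give. Your attention to matching the admissibility hypotheses is appropriate, though the paper does not elaborate on this beyond the brief remark preceding the corollary.
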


\section{Non-existence of DEC fill-ins}\label{DECfillin}
First, in \cite{SWWZ},  the construction of a scalar flat and asymptotically flat metric and decreasing total mean curvature difference along the radial direction proved in \cite{ST} are the main inputs to show that under certain assumptions, if an NNSC fill-in, i.e. $(\Omega, g)$ with $R_g\geq0$, of Bartnik data $(\S, \gamma, H)$ exists, then there is a contradiction to the Riemannian positive mass theorem with corners (\cite{ST}, \cite{M1}).  Heuristically from the perspective of energy, by \eqref{Ham}, we can see that if the boundary energy is too large, then the gravitation contribution must be negative. Motivated by this,  with Theorem \ref{main} and \cite{Shibuya}, we can obtain similar results regarding the energy condition of the fill-in of a spacetime Bartnik data set.  In other words,  a partial answer to the spacetime version of Conjecture \ref{Gromov} can be obtained. 

\begin{defn}(cf. \cite{B2} Definition 2, \cite{SWWZ})
For $n\geq 3$, a tuple $(\Sigma^{n-1}, \gamma, \alpha, H, \beta)$ is called a spacetime Bartnik data set, where $(\Sigma, \gamma, \alpha)$ is an oriented closed null-cobordant initial data set with $\alpha\in C^{1,\alpha}$, while $H$ and $\beta$ are respectively a smooth function and a $C^{1,\alpha}$ 1-form on $\Sigma$. A compact initial data set $(\Omega^n,g,k)$ is called a fill-in of $D_{SB}$ if there is an isometry $\phi: (\Sigma^{n-1}, \gamma)\to (\p\Omega, g|_{\p\Omega}) $ such that 
\begin{enumerate}
\item $\phi^*H_g=H$,  where $H_g$ is the mean curvature of $\p\Omega$ to $g$ with respect to the outward unit normal $\nu$,  
\item $\phi^*tr_{\p \Omega}k=tr_{\S}\alpha$,  and
\item $\phi^*( k(\nu,\cdot))=\beta$.
\end{enumerate}
\end{defn}
We can see from the above definition that on $\S$,  $$\phi^{*}(|\omega|_g)=\phi^*(|\pi(\nu,\cdot)|_g) =\sqrt{(\tr_{\S}\alpha)^2+|\beta|_{\gamma}^2}.$$

\begin{defn}
A fill-in $(\Omega^n, g, k)$ is said to satisfy a topological assumption $(T)$ if either one of the following holds: 
\begin{enumerate}
\item for $n=3$, there exists $\MS$, a finite (possibly empty) disjoint union of connected weakly trapped surfaces, such that $H_2(\Omega_{ext}, \MS, \mathbb{Z})=0$, where $\Omega_{ext}$ denotes the portion of $\Omega$ outside $\MS$. 
\item for $n\geq3$,  $\Omega$ is spin.  
\end{enumerate}
\end{defn}

The condition $(T)$ allows applications of Theorem \ref{main} or \cite{Shibuya} Section VI. Following in this section, $n\geq3$. For notation simplicity, the isometric embedding $\phi$ is omitted after gluing and identification if without ambiguity. Moreover, $\gamma_{std}$ denotes the standard
metric on $\mathbb{S}^{n-1}$ induced from the Euclidean space. 

\begin{thm} (cf. \cite{SWWZ} Theorem 1.3)
Let $D_{SB}:=(\mathbb{S}^{n-1}, \gamma, \alpha, H, \beta )$ be a spacetime Bartnik data set. If $\gamma$ is isotopic to $\gamma_{std}$ in $\M^q_{psc}(\mathbb{S}^{n-1}):=\{\eta: C^q \text{\,\,metrics on\,\,} \mathbb{S}^{n-1} {\,\,with\,\,} R_{\eta}>0 \}$, where $q\geq 5$, then there exists a constant $h_0=h_0(n,\gamma)>0$ such that 
if 
$$H-f>0\,\,\, \text{and\,\,\,\,}\, \int_{\mathbb{S}^{n-1}} H-f \, d\mu_{\gamma}>h_0,$$
where $f:=\sqrt{(\tr_{\S}\alpha)^2+|\beta|_{\gamma}^2}$,
then $D_{SB}$ cannot admit a fill-in satisfying $(T)$ and the dominant energy condition. 
\end{thm}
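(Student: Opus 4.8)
The plan is to argue by contradiction. Suppose $D_{SB}$ admits a fill-in $(\Omega^n, g, k)$ satisfying the topological assumption $(T)$ and the dominant energy condition. I would glue onto $\Omega$, along $\Sigma := \partial\Omega \cong \mathbb{S}^{n-1}$, a \emph{time-symmetric} asymptotically flat extension of very negative ADM energy and apply Theorem \ref{main} (when $n=3$) or the spin positive mass theorem of \cite{Shibuya} (when $n\geq3$) to the glued initial data set to obtain a contradiction. Keeping the extension time-symmetric is the key simplification: if the extension carries $k_+\equiv0$ then its conjugate momentum vanishes, so $\omega_+\equiv0$ on $\Sigma$, and the corner inequality demanded by Theorem \ref{main}, namely $(H_--H_+)-|\omega_--\omega_+|\geq0$, reduces to the purely Riemannian condition $H_+\leq H-|\omega_-|$. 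By the fill-in conditions $|\omega_-|$ equals $f=\sqrt{(\tr_{\Sigma}\alpha)^2+|\beta|_\gamma^2}$ on $\Sigma$, so this is exactly $H_+\leq H-f$, which is enforceable by construction because $H-f>0$.

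First I would build the extension. Viewing $(\Sigma,\gamma,H-f)$ as Riemannian Bartnik data with positive boundary mean curvature, and using that $\gamma$ is isotopic to $\gamma_{std}$ in $\mathcal{M}^q_{psc}(\mathbb{S}^{n-1})$, I would invoke the construction of \cite{SWWZ} --- the parabolic (quasi-spherical) method of \cite{ST} together with a Mantoulidis--Schoen-type collar built along the PSC isotopy --- to produce a manifold $(M_+,g_+)$ with $R_{g_+}\geq0$, a single asymptotically flat end, $\partial M_+$ isometric to $(\Sigma,\gamma)$ and of mean curvature equal to $H-f$ with respect to the unit normal pointing into $M_+$, and with ADM energy bounded above by
\[ E(g_+)\ \leq\ C(n,\gamma)\ -\ c(n)\int_{\Sigma}(H-f)\,d\mu_\gamma , \]
where $C(n,\gamma)$ and $c(n)$ are positive and independent of $H$ and $f$. (When $\gamma$ admits an isometric embedding into $\R^n$ as a convex hypersurface this is immediate from the Shi--Tam monotonicity, with the right-hand side equal to a constant multiple of $\int_\Sigma H_0-\int_\Sigma(H-f)$; the general case is the content of the extension lemma underlying \cite{SWWZ}.) Setting $h_0:=C(n,\gamma)/c(n)$, the hypothesis $\int_\Sigma(H-f)\,d\mu_\gamma>h_0$ forces $E(g_+)<0$. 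Endow $M_+$ with $k_+\equiv0$; then $\mu_+=\tfrac12R_{g_+}\geq0=|J_+|_{g_+}$, so the dominant energy condition holds on $M_+$.

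Next I would form $(\widehat M,\widehat g,\widehat k):=(\Omega\cup_\Sigma M_+,\,(g,g_+),\,(k,0))$, identifying $\partial\Omega$ with $\partial M_+$ by the Bartnik isometry. Then $\widehat g$ is Lipschitz across $\Sigma$ and smooth on either side up to $\Sigma$ (after the routine smoothing of the glued chart recalled in Section \ref{Preliminaries} if needed), while $\widehat k$ is smooth on either side and may jump across $\Sigma$ --- exactly the regularity permitted in Theorem \ref{main}. On $\Sigma$, with respect to the normal $\nu$ pointing toward infinity, the mean curvatures from the two sides are $H_-=H$ and $H_+=H-f$, and $\omega_-$ has pointwise norm $f$ while $\omega_+=0$, so $(H_--H_+)-|\omega_--\omega_+|=f-f=0\geq0$; the dominant energy condition holds on $\widehat M\setminus\Sigma$. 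Condition $(T)$ descends to $\widehat M$: since $M_+\cong\mathbb{S}^{n-1}\times[0,\infty)$ is parallelizable, $\widehat M=\Omega\cup_\Sigma M_+$ is spin whenever $\Omega$ is, and for $n=3$ a Mayer--Vietoris argument shows $H_2(\widehat M_{ext},\mathcal{S},\mathbb{Z})=0$ is inherited from $H_2(\Omega_{ext},\mathcal{S},\mathbb{Z})=0$ once the boundary component $\Sigma$ of $\Omega_{ext}$ is pushed to infinity, the surfaces of $\mathcal{S}$ remaining weakly trapped in $\widehat M$. Theorem \ref{main} (for $n=3$), resp. \cite{Shibuya} (for $n\geq3$, spin), then gives $E(\widehat g)\geq|P(\widehat g)|$; but $\widehat k$ vanishes near infinity, so $P(\widehat g)=0$ and $E(\widehat g)=E(g_+)<0$, a contradiction, proving the theorem.

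The main obstacle is the extension lemma of the second paragraph: producing a time-symmetric, non-negatively-curved asymptotically flat manifold with boundary isometric to $(\Sigma,\gamma)$, prescribed positive boundary mean curvature $H-f$, and ADM energy driven to $-\infty$ as $\int_\Sigma(H-f)$ grows. In the embeddable case this is the Shi--Tam monotonicity; in general it requires the PSC-isotopy collar construction of \cite{SWWZ}, and the delicate points are to keep $C(n,\gamma),c(n)$ independent of $H$ and $f$ and to remove any dependence on the chosen isotopy by taking an infimum, so that $h_0=h_0(n,\gamma)$. A secondary technical point is the homological bookkeeping transferring $(T)$ to $\widehat M$ when $n=3$; the remaining ingredients --- the corner inequality, the dominant energy condition on the extension, and the asymptotic identification $E(\widehat g)=E(g_+)$, $P(\widehat g)=0$ --- are routine.
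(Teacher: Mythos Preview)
Your proposal is correct and follows essentially the same approach as the paper: build a time-symmetric asymptotically flat extension via the [SWWZ] collar-plus-quasispherical construction with boundary mean curvature $H-f$, force $E_{ADM}(g_+)<0$ once $\int_\Sigma(H-f)$ exceeds a threshold depending only on $(n,\gamma)$, glue to the hypothetical DEC fill-in, verify the corner condition $H_--H_+-|\omega_--\omega_+|=f-f=0$, and derive a contradiction from Theorem~\ref{main} or \cite{Shibuya}. The paper's mass estimate has the slightly different explicit shape $c(n)E_{ADM}(g_+)\le n(n-1)|\mathbb{S}^{n-1}|s_0^{n-2}-s_0^{(n-2)(1-\varepsilon)/2}\int_\Sigma(H-f)$ (with $s_0=s_0(\gamma_t,\varepsilon)$ coming from the collar), but this is exactly the ``extension lemma'' you flagged as the main obstacle, and your treatment of the topological bookkeeping for $(T)$ is in fact more explicit than the paper's.
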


\begin{proof}
Notice that, since $\gamma$ is isotopic to $\gamma_{std}$ in $\M^q_{psc}$ (as mentioned in \cite{SWWZ}, by \cite{CM} Proposition 2.1 and its proof, the path $\gamma_t$ can be assumed to be smooth),  \cite{SWWZ} Lemma 2.1 shows that we can construct an asymptotically flat exterior extension $(M_{+}=\mathbb{S}^{n-1}\times [1,\infty), \bar g =dr^2+\bar \gamma_r)$ of $(\S^{n}, \gamma=\bar \gamma_1)$, which is exactly Euclidean (i.e. $\gamma_r=r^{n-1}\gamma_{std}$), for $r\geq s_0=s_0(\gamma_t, \eps)$ for any $\eps>0$. The choice of $\eps$ is to be determined.   

\ 

In the proof of \cite{SWWZ} Lemma 2.1, we can see that $R_{\bar g}$ is bounded by some constants depending on $\eps>0$ to be determined and $\gamma_t$. For this $M_+$, since $R_{\bar g}$ is bounded, the argument for the solvability of the initial value problem (\cite{SWWZ} P.14 equation (7)) for $u$ such that $g_+=u^2(r)dr^2+\bar \gamma_r$ is scalar flat and asymptotically flat in \cite{ST} is also applicable . The initial value $u(1)>0$ is to be determined. Let $\bar H_r$ and $H_r^{+}$ denote the mean curvature for $\S_r:=\mathbb{S}^{n-1}\times \{r\}$ in $\bar g$ and $g_+$ respectively.  

\

By equation (1.6) in \cite{ST}, and further note that ($\Sigma\times[s_o,\infty), \bar g)$ is Euclidean, then Lemma 4.2 and Lemma 2.10 in \cite{ST} are applicable. Together with the fact that, $\Sigma_{s_0}$ is a standard sphere in $\bar g$,  we have 
\be
\begin{split}
c(n) E_{ADM}(g_+)\leq & \int_{\Sigma_{s_0}}\bar H_{s_0} -H^+_{s_0} d\mu_{\bar \gamma_{s_0}}\\
\leq & n (n-1)|\mathbb{S}^{n-1}| s_0^{n-2}- \int_{\Sigma_{s_0}}H^+_{s_0} d\mu_{\bar \gamma_{s_0}}
\end{split}
\ee

What remains is to link the quantity $\int_{\Sigma_{s_0}}H^+_{s_0} \, d\mu_{\bar \gamma_{s_0}}$ to $\int_{\mathbb{S}^{n-1}} H-\omega \, d\mu_{\gamma}$.  Following \cite{SWWZ} Section 3.1, we thus have
\be
\begin{split}
c(n) E_{ADM}(g_+)
\leq & n (n-1)|\mathbb{S}^{n-1}| s_0^{n-2}- s_0^{\frac{(n-2)(1-\eps)}{2}}\int_{\mathbb{S}^{n-1}}  H_1^+ d\mu_{\gamma}.
\end{split}
\ee

\

Up to this step, it is the same as in the proof of Theorem 1.3 in \cite{SWWZ} as the construction for the extension only concerns the metric $\gamma$. 

\

Again, by equation (1.6) in \cite{ST}, we have $H_1^+=\frac{\bar H_1}{u(1)}$. Note that $\bar H_1>0$ by Lemma 2.1 equation (2) of \cite{SWWZ} and by fixing a choice of $\eps<<1$. 
If we choose the initial value $u(1):=\frac{\bar H_1}{H-f}$, then we have, 
\be
\begin{split}
c(n) E_{ADM}(g_+)
\leq & n (n-1)|\mathbb{S}^{n-1}| s_0^{n-2}- s_0^{\frac{(n-2)(1-\eps)}{2}}\int_{\mathbb{S}^{n-1}} H-f\, d\mu_{\gamma}. 
\end{split}
\ee

Thus, if we choose $h_0=h_0(n,\gamma)= n (n-1)|\mathbb{S}^{n-1}| s_0^{n-2-\frac{(n-2)(1-\eps)}{2}}$, then $E_{ADM}(g_+)<0$. Let $(\Omega, g,k)$ be a fill-in which satisfies the assumptions of the proposition. Then, $(\Omega, g,k)$ with $(M_+,g_+,0)$, altogether is an initial data set with a corner $\mathbb{S}^{n-1}$ on which $$H_g-H_1^+-|\pi(\nu,\cdot)|_g=H-(H-f)-f=0,$$ and $M_+$ satisfies the dominant energy condition by construction. If $(\Omega, g, k)$ furthermore simultaneously satisfies $(T)$ and the dominant energy condition, by Theorem \ref{main} or \cite{Shibuya} Section VI., $E_{ADM}(g_+)\geq0$, contradiction arises. 
\end{proof}

\begin{thm} (cf. \cite{SWWZ} Theorem 1.4)
Let $D_{SB}:=(\mathbb{S}^{n-1}, \gamma, \alpha, H, \beta )$ be a spacetime Bartnik data set. If $\gamma\in\M^n_{c,d}:=\{\eta: C^{\infty} \text{\,\,metrics on\,\,} \mathbb{S}^{n-1} {\,\,with\,\,}\\ |Rm_{\eta}|\leq c, \, diam({\eta})\leq d, \, vol(\eta)=vol(\gamma_{std})  \}$, then there exists a constant $C_0(n,c,d)>0$ such that if 
$$H-f\geq C_0,$$
where $f:=\sqrt{(\tr_{\S}\alpha)^2+|\beta|_{\gamma}^2}$,
then $D_{SB}$ cannot admit a fill-in satisfying $(T)$ and the dominant energy condition.
\end{thm}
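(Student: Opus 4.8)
\emph{Proof strategy.} The plan is to run the argument of the previous theorem (the spacetime analogue of \cite{SWWZ} Theorem~1.3) line by line, replacing the isotopy-dependent exterior extension by the uniform exterior extension available over the precompact family $\M^n_{c,d}$. Since $|Rm|$, $diam$ and $vol$ are all two-sidedly controlled on $\M^n_{c,d}$, this family is precompact in the $C^{1,\alpha}$ topology by Cheeger--Gromov; following the $\M^n_{c,d}$ construction of \cite{SWWZ} --- which, using this precompactness together with a short smoothing step (e.g.\ a uniform-time Ricci flow) to obtain smooth metric bounds depending only on $n,c,d$, interpolates $\gamma$ to a Euclidean cone over a uniformly bounded radial interval --- one obtains a constant $s_0=s_0(n,c,d)\ge 1$ and, for every $\gamma\in\M^n_{c,d}$, an asymptotically flat exterior extension $(M_+=\mathbb{S}^{n-1}\times[1,\infty),\,\bar g=dr^2+\bar\gamma_r)$ of $(\mathbb{S}^{n-1},\gamma=\bar\gamma_1)$ with $R_{\bar g}$ bounded in absolute value, $\bar g$ exactly Euclidean ($\bar\gamma_r=r^{n-1}\gamma_{std}$) for $r\ge s_0$, and all slices $\Sigma_r=\mathbb{S}^{n-1}\times\{r\}$ mean convex ($\bar H_r>0$). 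The only difference from the previous proof is that all of these data now depend on $n,c,d$ alone and not on an isotopy path.

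Next I would solve the Shi--Tam quasi-spherical ODE on $M_+$ (equation~(7) of \cite{SWWZ}, solvable because $R_{\bar g}$ is bounded, as in \cite{ST}) for the warping factor $u(r)$ with initial value $u(1):=\bar H_1/(H-f)$, which is positive since $H-f\ge C_0>0$. This produces a scalar-flat asymptotically flat metric $g_+=u^2(r)\,dr^2+\bar\gamma_r$ with corner mean curvature $H_1^+=\bar H_1/u(1)=H-f$ on $\Sigma_1$ (by equation~(1.6) in \cite{ST}). Since $(\mathbb{S}^{n-1}\times[s_0,\infty),\bar g)$ is Euclidean, Lemma~4.2 and Lemma~2.10 of \cite{ST} apply, and exactly as in the previous proof,
\be
c(n)\,E_{ADM}(g_+)\le n(n-1)\,|\mathbb{S}^{n-1}|\,s_0^{\,n-2}-s_0^{\frac{(n-2)(1-\eps)}{2}}\int_{\mathbb{S}^{n-1}}(H-f)\,d\mu_\gamma,
\ee
for a fixed small $\eps$. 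Using $H-f\ge C_0$ together with the normalisation $vol(\gamma)=vol(\gamma_{std})$ gives $\int_{\mathbb{S}^{n-1}}(H-f)\,d\mu_\gamma\ge C_0\,vol(\gamma_{std})$, so choosing
\be
C_0=C_0(n,c,d)>\frac{n(n-1)\,|\mathbb{S}^{n-1}|\,s_0^{\,n-2}}{s_0^{\frac{(n-2)(1-\eps)}{2}}\,vol(\gamma_{std})}
\ee
forces $E_{ADM}(g_+)<0$.

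Finally I would glue. Suppose $(\Omega^n,g,k)$ is a fill-in of $D_{SB}$ satisfying $(T)$ and the dominant energy condition, and attach $(M_+,g_+,0)$ to $(\Omega,g,k)$ along $\mathbb{S}^{n-1}$ via the isometry $\phi$. Then $g_+\cup g$ is Lipschitz across the corner, $(M_+,g_+,0)$ satisfies DEC trivially ($k_+=0$ and $R_{g_+}=0$, so $\mu_+=|J_+|=0$), and on the corner
\be
H_g-H_1^+-|\pi(\nu,\cdot)|_g=H-(H-f)-f=0,
\ee
so the corner sign condition of Theorem~\ref{main} holds, with the exterior side carrying $\omega_+=0$. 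Applying Theorem~\ref{main} when $n=3$ (whose topological hypothesis is part of $(T)$), or \cite{Shibuya} Section~VI when $\Omega$ is spin, yields $E_{ADM}(g_+)\ge 0$, contradicting the preceding paragraph; hence no such fill-in exists.

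The main obstacle is the first step: obtaining the exterior extension with $s_0$, the bound on $R_{\bar g}$, and the positivity of the slice mean curvatures depending only on $n,c,d$ rather than on the individual metric. Everything downstream is a transcription of the previous proof, but this uniform extension is precisely where the geometry of $\M^n_{c,d}$ enters --- the $C^{1,\alpha}$-precompactness and the smoothing step that upgrades it to uniform smooth control. One must also verify that this construction, being a statement about $\gamma$ alone, is entirely insensitive to the remaining Bartnik data $(\alpha,H,\beta)$, so that the choice $u(1)=\bar H_1/(H-f)$ is legitimate for every admissible value $H-f\ge C_0$.
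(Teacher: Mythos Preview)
Your argument is plausible but takes a different route from the paper, and the difference is not cosmetic. The paper does \emph{not} try to reproduce the mass inequality of the previous theorem over a uniform smooth extension. Instead it invokes \cite{SWWZ} Lemma~2.4 (followed by Lemma~2.1), which builds an asymptotically flat extension carrying an \emph{internal} corner where the mean curvatures from the two sides already match; the resulting glued manifold therefore has \emph{two} disjoint corners, not one. On this extension the paper chooses the constant initial value $u(1)=\bar H_1(n,c)/C_0$ (so that $H_1^+=C_0$ rather than $H-f$) and then argues directly: if $C_0$ is large enough relative to the curvature bounds coming from Lemma~2.4, the quasi-spherical factor satisfies $0<u<1$ on all of $M_+$, which immediately gives $E_{ADM}(g_+)<0$. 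The corner condition at $\Sigma_1$ then reads $H-C_0-f\ge 0$ (an inequality, not an equality), and the internal corner is harmless because its mean curvatures match.

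Your version pushes the entire burden onto producing a \emph{smooth} extension with uniform $s_0$, bounded $R_{\bar g}$, and the $s_0^{(n-2)(1-\eps)/2}$ growth estimate of Section~3.1 in \cite{SWWZ}, all depending only on $(n,c,d)$; you correctly flag this as the main obstacle. The paper's route avoids precisely that obstacle by accepting an internal corner and replacing the integral mass inequality with the much more robust $0<u<1$ comparison. What you gain from your approach is a single-corner picture and a $C_0$ given by an explicit formula; what the paper gains is that nothing beyond the crude ODE comparison for $u$ needs to be verified, so the dependence on the uniform extension is minimal. If you want to complete your version, you must either check that \cite{SWWZ} Lemma~2.4 actually yields a smooth $\bar g$ (it does not --- it already has a corner), or redo the monotonicity/growth estimates across that internal corner.
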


\begin{proof}
The construction of an asymptotically flat extension with a corner where mean curvatures match is done in \cite{SWWZ} Lemma 2.4 followed by Lemma 2.1. The solvability of the initial value problem for $u$ is again by \cite{ST}, where $u(1)=\frac{\bar H_1(n,c)}{C_0}$ while $C_0$ is to be determined. If $C_0>0$ is sufficiently big,  depending on the curvature of the extension constructed in \cite{SWWZ} Lemma 2.4 which depends on $n, c$ and $d$, then $0<u<1$ on $M_+=\mathbb{S}^{n-1}\times[1,\infty)$. Moreover, $E_{ADM}(g_+=u(r)^2+\bar \gamma_r)<0$. 

\

Assume on the contrary that there exists a fill-in of $D_{SB}$, $(\Omega,g,k)$ which satisfies the assumptions of the proposition, $(T)$ and the dominant energy condition. Gluing $\Omega$ and $M_+$, we have got an asymptotically flat initial data sets with 2 disjoint corners. For the corner in $M_+$, as mentioned, mean curvatures match by the construction in \cite{SWWZ} Lemma 2.4. For the corner $\p M_+=\Sigma_1$, we have $H-C_0-f\geq 0$. By Theorem \ref{main} or Section VI. in \cite{Shibuya}, $E_{ADM}(g_+)\geq 0$, contradiction arises. 
\end{proof}

While in \cite{SWW},  the parabolic method to extend metric in \cite{ST} is used to construct a PSC ($R_g>0$) collar, which combined with he Riemannian positive mass theorem with corners can show non-existence of NNSC fill-ins.  In the same spirit as above,  we can arrive at the following conclusion. 

\begin{thm} (cf. \cite{SWW} Theorem 1.2)
Let $D_{SB}:=(\S^{n-1}, \gamma, \alpha, H, \beta )$ be a spacetime Bartnik data set where $\S^{n-1}$ can be smoothly embedded into $\R^n$ and $\gamma$ is smooth. There exists a constant $C_0=C_0(\Sigma,\gamma)>0$ such that if $$H-f\geq C_0,$$ 
where $f:=\sqrt{(\tr_{\S}\alpha)^2+|\beta|_{\gamma}^2}$,
then $D_{SB}$ cannot admit a fill-in satisfying $(T)$ and the dominant energy condition.
\end{thm}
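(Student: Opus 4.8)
The plan is to mimic the proofs of the two preceding theorems in this section, substituting the exterior extensions of \cite{SWWZ} by the parabolic collar construction of \cite{SWW}. First I would use the hypothesis that $\Sigma^{n-1}$ admits a smooth embedding into $\mathbb{R}^n$ and that $\gamma$ is smooth, together with the parabolic method of \cite{ST} as developed in \cite{SWW}, to produce an asymptotically flat exterior $M_+ = \Sigma\times[1,\infty)$ with metric $\bar g = dr^2 + \bar\gamma_r$ such that: $\bar\gamma_1 = \gamma$; the leaves $\Sigma_r = \Sigma\times\{r\}$ have positive mean curvature $\bar H_r > 0$; $\bar g$ is exactly Euclidean for $r \geq s_0$; and $\|R_{\bar g}\|$ is bounded by a constant depending only on $\Sigma$ and $\gamma$ (the PSC collar of \cite{SWW} interpolating between $(\Sigma,\gamma)$ and the Euclidean exterior of the embedding, with controlled curvature). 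Here $s_0 = s_0(\Sigma,\gamma)$.

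Next I would solve, as in \cite{ST}, the initial value problem for a positive function $u(r)$ with $u\to 1$ at infinity so that $g_+ = u^2\,dr^2 + \bar\gamma_r$ is scalar flat (hence satisfies the dominant energy condition with $k_+\equiv 0$) and asymptotically flat; solvability follows from the bound on $R_{\bar g}$ and from $\bar H_r > 0$. By equation (1.6) of \cite{ST} the mean curvature of $\Sigma_1$ in $g_+$ is $H_1^+ = \bar H_1 / u(1)$, so I choose $u(1) := \bar H_1/(H - f) \in (0,1]$, which is legitimate since $H - f \geq C_0 > 0$; then $H_1^+ = H - f$ on $\Sigma$. Using the monotonicity of the Shi--Tam quantity along the foliation and its convergence to $E_{ADM}(g_+)$ (\cite{ST} Lemma 4.2, Theorem 2.1(c)) together with the fact that $\bar g$ is Euclidean outside a compact set, I obtain an estimate
\[
c(n)\,E_{ADM}(g_+) \;\leq\; A(\Sigma,\gamma) \;-\; B(\Sigma,\gamma)\int_\Sigma (H - f)\,d\mu_\gamma \;\leq\; A(\Sigma,\gamma) - B(\Sigma,\gamma)\,|\Sigma|_\gamma\, C_0,
\]
with $A, B > 0$ depending only on $\Sigma, \gamma$ (after fixing $\epsilon\ll 1$ in the scaling powers $s_0^{(n-2)(1-\epsilon)/2}$ exactly as in the \cite{SWWZ}-type proof above); equivalently, $u(1)\to 0$ uniformly as $C_0\to\infty$, forcing $0 < u < 1$ on $M_+$ and $E_{ADM}(g_+) < 0$. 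Hence there is $C_0 = C_0(\Sigma,\gamma) > 0$ making $E_{ADM}(g_+) < 0$.

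Now suppose, for contradiction, that $D_{SB}$ admits a fill-in $(\Omega^n, g, k)$ satisfying $(T)$ and the dominant energy condition. Gluing $\Omega$ to $M_+$ along $\Sigma$ via the Bartnik isometry produces an asymptotically flat initial data set with a single corner $\Sigma$, across which $k$ jumps from $k$ to $0$; using $\phi^*(|\omega|_g) = f$ one computes on $\Sigma$
\[
H_g - H_1^+ - |\pi(\nu,\cdot)|_g \;=\; H - (H - f) - f \;=\; 0,
\]
so that $(H_- - H_+) - |\omega_- - \omega_+| = 0 \geq 0$. Since the dominant energy condition holds on both pieces, Theorem \ref{main} (when $n = 3$, using the homology assumption in $(T)$) or \cite{Shibuya} Section VI (when $n\geq 3$ and $\Omega$ is spin) gives $E \geq |P| \geq 0$ for the glued data, i.e. $E_{ADM}(g_+)\geq 0$; this contradicts the previous paragraph. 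Therefore no such fill-in exists.

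The hard part is the first step: producing the exterior extension $(M_+,\bar g)$ with positive leafwise mean curvature, exactly Euclidean outside a compact set, and with curvature bounded only in terms of $\Sigma$ and $\gamma$, when $\gamma$ is an arbitrary smooth metric and $\Sigma$ embeds merely smoothly (not convexly or isometrically) into $\mathbb{R}^n$ --- this is precisely where the parabolic PSC-collar technology of \cite{SWW} is required. Granting that construction, the remaining steps are verbatim the arguments already carried out in this section for the \cite{SWWZ}-type theorems.
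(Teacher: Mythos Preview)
Your approach diverges from the paper's, and the divergence creates a real gap. You try to rerun the quasi-spherical scheme of the two preceding theorems: build an exterior $M_+=\Sigma\times[1,\infty)$ with positive leafwise mean curvature $\bar H_r>0$, solve the Shi--Tam ODE for $u$ with initial value $\bar H_1/(H-f)$, and drive $E_{ADM}(g_+)$ negative. The trouble is the first step. The parabolic collar of \cite{SWW} (their Lemma 2.1) produces a cobordism $(\Sigma\times[0,1],\hat g)$ with $R_{\hat g}>0$ joining $(\Sigma,\gamma)$ to a rescaled embedded metric $\gamma_1$; it does \emph{not} give a foliation with $\bar H_r>0$, and in fact the inner boundary mean curvature $h_0$ need not be positive. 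Without positive leafwise mean curvature the Shi--Tam initial value problem is not available, and your monotonicity estimate for the ADM energy has no footing. Your closing paragraph identifies this as ``the hard part'' but misattributes to \cite{SWW} exactly the property that is missing.

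The paper sidesteps the issue entirely. It uses the \cite{SWW} collar as is---no quasi-spherical deformation---and glues its outer end to the genuine Euclidean exterior of a rescaled embedding $\lambda_0 F(\Sigma)\subset\mathbb R^n$. This produces an asymptotically flat manifold with \emph{two} corners and $E_{ADM}=0$ exactly. At the inner corner one has $H-(-h_0)-f\ge 0$ provided $H-f\ge C_0:=\max_\Sigma(-h_0)$; at the outer corner the collar is built so that $h_1>\tilde h$ strictly. The integral inequality of Theorem~\ref{main} (or \cite{Shibuya} in the spin case) then forces $E_{ADM}>0$, contradicting $E_{ADM}=0$. Thus the paper trades your ``make the ADM energy negative'' mechanism for ``make the corner contribution strictly positive against a flat background,'' which lets it use the \cite{SWW} collar without any mean-curvature positivity along the foliation.
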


\begin{proof} Let $F:\Sigma^{n-1} \hookrightarrow \R^{n}$ be an embedding.  For $\lambda>0$, $\lambda F$ is also an embedding.  There exists a $\lambda_0>0$ such that $\gamma_1:=\lambda_0^2F^*(g_{Euc})>\gamma$, where $g_{Euc}$ is the Euclidean metric on $\R^n$.  Let $\tilde h$ denote the mean curvature of $\gamma_1$ with respect to the outward normal in $\R^n$.  Denote the unbounded region of $\R^n$ outside $\lambda_0 F (\Sigma)$ by $M_+$. 

\

By \cite{SWW} Lemma 2.1,  we know that there exists a cobordism $(\Sigma\times[0,1],\hat g)$ and $h_0, h_1\in C^{\infty}(\S)$ such that 
\begin{enumerate}
\item $\hat {g} |_{\Sigma \times\{0\}}=\gamma$ and $\hat{g}|_{\Sigma \times\{1\}}=\gamma_1$, 
\item With respect to $\hat{g}$ and the outward normal, the mean curvature of $\Sigma\times \{0\}$ and $\Sigma \times\{1\}$ are respectively $h_0$ and $h_1$, 
\item $h_1>\tilde h$ and
\item $R_{\hat g}>0$.  
\end{enumerate}

Pick $C_0=\max(-h_0)$. Let $(\Omega, g, k)$ be a fill-in satisfying the assumption of the proposition. Then glue $\p\Omega$ to $\Sigma\times [0,1]$ along $\Sigma\times \{0\}$ and further glue $\Sigma\times [0,1]$ along $\Sigma\times \{1\}$ to $M_+$.  Altogether, we have a manifold with a flat end and hence with $E_{ADM}=0$.  Across the corners $\Sigma\times \{0\}$ and $\Sigma\times \{1\}$, we respectively have $H-(-h_0)-f\geq 0$ and $h_1-\tilde h>0$. If $(\Omega,g,k)$ satisfies both $(T)$ and the dominant energy condition, by Theorem $\ref{main}$ or \cite{Shibuya} Section VI., $E_{ADM}>0$, contradiction arises. 
\end{proof}

\begin{rem}
For a charged initial data set $(M,g,\mE)$ with corners, we can consider charged harmonic functions (\cite{BHKKZ} Section 8) and the quantity associated with the divergence-free electric field $2\la \mE_{\pm},\nu \ra$ like $\pi_{\pm}(\nu, \cdot)$, in particular, $\mE$ are only required to be $L^{\infty}$ across the hypersurface where the corner of $g$ occurs.  The observations in this note are applicable and the corresponding results can be obtained. 
\end{rem}

\end{document}